\newtheorem{thm}{Theorem}[section]
\theoremstyle{plain}
\newtheorem*{thm*}{{\bf Theorem}}
\newtheorem{lem}[thm]{Lemma}
\newtheorem{prop}[thm]{Proposition}
\theoremstyle{definition}
\newtheorem{dfn}[thm]{Definition}
\theoremstyle{remark}
\definecolor{A}{rgb}{.75,1,.75}
\numberwithin{equation}{section}
\tikzset{->->-/.style={decoration={
  markings,
  mark=at position .25 with {\arrow[scale=1.5,black]{>}}, mark=at position .85 with {\arrow[scale=1.5,black]{>}}},postaction={decorate}}}
\tikzset{->-/.style={decoration={
  markings,
  mark=at position .55 with {\arrow[scale=1.5,black]{>}}},postaction={decorate}}}
\tikzset{->/.style={decoration={
  markings,
  mark=at position .99 with {\arrow[scale=1.5,black]{>}}},postaction={decorate}}}
\tikzset{<-/.style={decoration={
  markings,
  mark=at position .10 with {\arrow[scale=1.5,black]{<}}},postaction={decorate}}}
\tikzset{-<-/.style={decoration={
  markings,
  mark=at position .55 with {\arrow[scale=1.5,black]{<}}},postaction={decorate}}}
\tikzset{-<-<-/.style={decoration={
  markings,
  mark=at position .25 with {\arrow[scale=1.5,black]{<}}, mark=at position .85 with {\arrow[scale=1.5,black]{<}}},postaction={decorate}}}
\newcommand{\nesc}[4]{
\pgfmathsetmacro{\xscale}{{#3}}
\pgfmathsetmacro{\yscale}{{#4}}
\pgfmathsetmacro{\xcenter}{{#1}}
\pgfmathsetmacro{\ycenter}{{#2}}
	\pgfmathparse{-.5*\xscale+\xcenter}		\let\Xa\pgfmathresult
    \pgfmathparse{-1*\yscale+\ycenter}		\let\Ya\pgfmathresult
    \coordinate (A) at (\Xa,\Ya);
	\pgfmathparse{-.5*\xscale+\xcenter}		\let\Xb\pgfmathresult
    \pgfmathparse{0+\ycenter}		\let\Yb\pgfmathresult
    \coordinate (B) at (\Xb,\Yb);
	\pgfmathparse{-.25*\xscale+\xcenter}		\let\Xc\pgfmathresult
    \pgfmathparse{.5*\yscale+\ycenter}		\let\Yc\pgfmathresult
    \coordinate (C) at (\Xc,\Yc);
	\pgfmathparse{0+\xcenter}		\let\Xd\pgfmathresult
    \pgfmathparse{0+\ycenter}		\let\Yd\pgfmathresult
    \coordinate (D) at (\Xd,\Yd);
	\pgfmathparse{.25*\xscale+\xcenter}		\let\Xe\pgfmathresult
    \pgfmathparse{-.5*\yscale+\ycenter}		\let\Ye\pgfmathresult
    \coordinate (E) at (\Xe,\Ye);
	\pgfmathparse{.5*\xscale+\xcenter}		\let\Xf\pgfmathresult
    \pgfmathparse{0*\yscale+\ycenter}		\let\Yf\pgfmathresult
    \coordinate (F) at (\Xf,\Yf);
	\pgfmathparse{.5*\xscale+\xcenter}		\let\Xg\pgfmathresult
    \pgfmathparse{1*\yscale+\ycenter}		\let\Yg\pgfmathresult
    \coordinate (G) at (\Xg,\Yg);
\draw[line width=1pt] (A) .. controls (B) and (C) .. (D) .. controls (E) and (F)  .. (G);
}
\newcommand{\dcap}[4]{
\pgfmathsetmacro{\xscale}{{#3}}
\pgfmathsetmacro{\yscale}{{#4}}
\pgfmathsetmacro{\xleft}{{#1}}
\pgfmathsetmacro{\ybottom}{{#2}}
	\pgfmathparse{\xleft}		\let\Xa\pgfmathresult
    \pgfmathparse{\ybottom}		\let\Ya\pgfmathresult
    \coordinate (A) at (\Xa,\Ya);
	\pgfmathparse{\xleft}		\let\Xb\pgfmathresult
    \pgfmathparse{\ybottom+\yscale}		\let\Yb\pgfmathresult
    \coordinate (B) at (\Xb,\Yb);
	\pgfmathparse{\xleft+\xscale}		\let\Xc\pgfmathresult
    \pgfmathparse{\ybottom+\yscale}		\let\Yc\pgfmathresult
    \coordinate (C) at (\Xc,\Yc);
	\pgfmathparse{\xleft+\xscale}		\let\Xd\pgfmathresult
    \pgfmathparse{\ybottom}		\let\Yd\pgfmathresult
    \coordinate (D) at (\Xd,\Yd);
\draw[line width=1pt] (A) .. controls (B) and (C) .. (D);
}
\newcommand{\cwcap}[4]{
\pgfmathsetmacro{\xscale}{{#3}}
\pgfmathsetmacro{\yscale}{{#4}}
\pgfmathsetmacro{\xleft}{{#1}}
\pgfmathsetmacro{\ybottom}{{#2}}
	\pgfmathparse{\xleft}		\let\Xa\pgfmathresult
    \pgfmathparse{\ybottom}		\let\Ya\pgfmathresult
    \coordinate (A) at (\Xa,\Ya);
	\pgfmathparse{\xleft}		\let\Xb\pgfmathresult
    \pgfmathparse{\ybottom+\yscale}		\let\Yb\pgfmathresult
    \coordinate (B) at (\Xb,\Yb);
	\pgfmathparse{\xleft+\xscale}		\let\Xc\pgfmathresult
    \pgfmathparse{\ybottom+\yscale}		\let\Yc\pgfmathresult
    \coordinate (C) at (\Xc,\Yc);
	\pgfmathparse{\xleft+\xscale}		\let\Xd\pgfmathresult
    \pgfmathparse{\ybottom}		\let\Yd\pgfmathresult
    \coordinate (D) at (\Xd,\Yd);
\draw[line width=1pt, ->-] (A) .. controls (B) and (C) .. (D);
}
\newcommand{\ccwcap}[4]{
\pgfmathsetmacro{\xscale}{{#3}}
\pgfmathsetmacro{\yscale}{{#4}}
\pgfmathsetmacro{\xleft}{{#1}}
\pgfmathsetmacro{\ybottom}{{#2}}
	\pgfmathparse{\xleft}		\let\Xa\pgfmathresult
    \pgfmathparse{\ybottom}		\let\Ya\pgfmathresult
    \coordinate (A) at (\Xa,\Ya);
	\pgfmathparse{\xleft}		\let\Xb\pgfmathresult
    \pgfmathparse{\ybottom+\yscale}		\let\Yb\pgfmathresult
    \coordinate (B) at (\Xb,\Yb);
	\pgfmathparse{\xleft+\xscale}		\let\Xc\pgfmathresult
    \pgfmathparse{\ybottom+\yscale}		\let\Yc\pgfmathresult
    \coordinate (C) at (\Xc,\Yc);
	\pgfmathparse{\xleft+\xscale}		\let\Xd\pgfmathresult
    \pgfmathparse{\ybottom}		\let\Yd\pgfmathresult
    \coordinate (D) at (\Xd,\Yd);
\draw[line width=1pt, -<-] (A) .. controls (B) and (C) .. (D);
}
\newcommand{\dcup}[4]{
\pgfmathsetmacro{\xscale}{{#3}}
\pgfmathsetmacro{\yscale}{{#4}}
\pgfmathsetmacro{\xleft}{{#1}}
\pgfmathsetmacro{\ybottom}{{#2}}
	\pgfmathparse{\xleft}		\let\Xa\pgfmathresult
    \pgfmathparse{\ybottom+\yscale}		\let\Ya\pgfmathresult
    \coordinate (A) at (\Xa,\Ya);
	\pgfmathparse{\xleft}		\let\Xb\pgfmathresult
    \pgfmathparse{\ybottom}		\let\Yb\pgfmathresult
    \coordinate (B) at (\Xb,\Yb);
	\pgfmathparse{\xleft+\xscale}		\let\Xc\pgfmathresult
    \pgfmathparse{\ybottom}		\let\Yc\pgfmathresult
    \coordinate (C) at (\Xc,\Yc);
	\pgfmathparse{\xleft+\xscale}		\let\Xd\pgfmathresult
    \pgfmathparse{\ybottom+\yscale}		\let\Yd\pgfmathresult
    \coordinate (D) at (\Xd,\Yd);
\draw[line width=1pt] (A) .. controls (B) and (C) .. (D);
}
\newcommand{\cwcup}[4]{
\pgfmathsetmacro{\xscale}{{#3}}
\pgfmathsetmacro{\yscale}{{#4}}
\pgfmathsetmacro{\xleft}{{#1}}
\pgfmathsetmacro{\ybottom}{{#2}}
	\pgfmathparse{\xleft}		\let\Xa\pgfmathresult
    \pgfmathparse{\ybottom+\yscale}		\let\Ya\pgfmathresult
    \coordinate (A) at (\Xa,\Ya);
	\pgfmathparse{\xleft}		\let\Xb\pgfmathresult
    \pgfmathparse{\ybottom}		\let\Yb\pgfmathresult
    \coordinate (B) at (\Xb,\Yb);
	\pgfmathparse{\xleft+\xscale}		\let\Xc\pgfmathresult
    \pgfmathparse{\ybottom}		\let\Yc\pgfmathresult
    \coordinate (C) at (\Xc,\Yc);
	\pgfmathparse{\xleft+\xscale}		\let\Xd\pgfmathresult
    \pgfmathparse{\ybottom+\yscale}		\let\Yd\pgfmathresult
    \coordinate (D) at (\Xd,\Yd);
\draw[line width=1pt, -<-] (A) .. controls (B) and (C) .. (D);
}
\newcommand{\ccwcup}[4]{
\pgfmathsetmacro{\xscale}{{#3}}
\pgfmathsetmacro{\yscale}{{#4}}
\pgfmathsetmacro{\xleft}{{#1}}
\pgfmathsetmacro{\ybottom}{{#2}}
	\pgfmathparse{\xleft}		\let\Xa\pgfmathresult
    \pgfmathparse{\ybottom+\yscale}		\let\Ya\pgfmathresult
    \coordinate (A) at (\Xa,\Ya);
	\pgfmathparse{\xleft}		\let\Xb\pgfmathresult
    \pgfmathparse{\ybottom}		\let\Yb\pgfmathresult
    \coordinate (B) at (\Xb,\Yb);
	\pgfmathparse{\xleft+\xscale}		\let\Xc\pgfmathresult
    \pgfmathparse{\ybottom}		\let\Yc\pgfmathresult
    \coordinate (C) at (\Xc,\Yc);
	\pgfmathparse{\xleft+\xscale}		\let\Xd\pgfmathresult
    \pgfmathparse{\ybottom+\yscale}		\let\Yd\pgfmathresult
    \coordinate (D) at (\Xd,\Yd);
\draw[line width=1pt, ->-] (A) .. controls (B) and (C) .. (D);
}
\newcommand{\ids}[5]{
\pgfmathsetmacro{\xl}{{#1}}
\pgfmathsetmacro{\yb}{{#2}}
\pgfmathsetmacro{\num}{{#5}}
\pgfmathsetmacro{\yheight}{{#4}}
\pgfmathsetmacro{\yt}{\yb+\yheight};
\pgfmathsetmacro{\xscale}{{#3}};
	\foreach \x in {1,...,\num}{
		\pgfmathparse{\xl+(\x-1)*\xscale}\let\xc\pgfmathresult;
		\draw[line width=1pt] (\xc,\yb) -- (\xc, \yt);
	}
}
\newcommand{\idsup}[5]{
\pgfmathsetmacro{\xl}{{#1}}
\pgfmathsetmacro{\yb}{{#2}}
\pgfmathsetmacro{\num}{{#5}}
\pgfmathsetmacro{\yheight}{{#4}}
\pgfmathsetmacro{\yt}{\yb+\yheight};
\pgfmathsetmacro{\xscale}{{#3}};
	\foreach \x in {1,...,\num}{
		\pgfmathparse{\xl+(\x-1)*\xscale}\let\xc\pgfmathresult
		\draw[line width=1pt,->-] (\xc,\yb) -- (\xc, \yt);
	}
}
\newcommand{\idsdown}[5]{
\pgfmathsetmacro{\xl}{{#1}}
\pgfmathsetmacro{\yb}{{#2}}
\pgfmathsetmacro{\num}{{#5}}
\pgfmathsetmacro{\yheight}{{#4}}
\pgfmathsetmacro{\yt}{\yb+\yheight};
\pgfmathsetmacro{\xscale}{{#3}};
	\foreach \x in {1,...,\num}{
		\pgfmathparse{\xl+(\x-1)*\xscale}\let\xc\pgfmathresult
		\draw[line width=1pt,-<-] (\xc,\yb) -- (\xc, \yt);
	}
}
\newcommand{\rcross}[4]{
\tikzstyle{cross line}=[preaction={draw=white, -,line width=10pt}];
\pgfmathsetmacro{\xl}{{#1}}
\pgfmathsetmacro{\yb}{{#2}}
\pgfmathsetmacro{\yheight}{{#3}}
\pgfmathsetmacro{\xscale}{{#4}}
\pgfmathsetmacro{\yt}{\yb+\yheight};
\pgfmathsetmacro{\xr}{\xscale+\xl};
\pgfmathsetmacro{\yc}{\yb+\yheight/2};
\pgfmathsetmacro{\xc}{\xscale/2+\xl};

	\pgfmathparse{\xl}		\let\Xa\pgfmathresult
    \pgfmathparse{\yt}		\let\Ya\pgfmathresult
    \coordinate (A) at (\Xa,\Ya);
	\pgfmathparse{\xl+(0.3*\xscale)}		\let\Xaa\pgfmathresult
    \pgfmathparse{\yb+(0.9*\yheight)}		\let\Yaa\pgfmathresult
    \coordinate (A') at (\Xaa,\Yaa);
	\pgfmathparse{\xl}		\let\Xb\pgfmathresult
    \pgfmathparse{\yb}		\let\Yb\pgfmathresult
    \coordinate (B) at (\Xb,\Yb);
	\pgfmathparse{\xl+(0.3*\xscale)}		\let\Xbb\pgfmathresult
    \pgfmathparse{\yb+(0.1*\yheight)}		\let\Ybb\pgfmathresult
    \coordinate (B') at (\Xbb,\Ybb);
	\pgfmathparse{\xr}		\let\Xc\pgfmathresult
    \pgfmathparse{\yb}		\let\Yc\pgfmathresult
    \coordinate (C) at (\Xc,\Yc);
	\pgfmathparse{\xl+(0.7*\xscale)}		\let\Xcc\pgfmathresult
    \pgfmathparse{\yb+(0.1*\yheight)}		\let\Ycc\pgfmathresult
    \coordinate (C') at (\Xcc,\Ycc);
	\pgfmathparse{\xr}		\let\Xd\pgfmathresult
    \pgfmathparse{\yt}		\let\Yd\pgfmathresult
    \coordinate (D) at (\Xd,\Yd);
	\pgfmathparse{\xl+(0.7*\xscale)}		\let\Xdd\pgfmathresult
    \pgfmathparse{\yb+(0.9*\yheight)}		\let\Ydd\pgfmathresult
    \coordinate (D') at (\Xdd,\Ydd);
	\pgfmathparse{\xc}		\let\Xe\pgfmathresult
    \pgfmathparse{\yc}		\let\Ye\pgfmathresult
    \coordinate (E) at (\Xe,\Ye);

\draw[line width=1pt] (A) .. controls (B') and (D') .. (C);
\draw (\xc, \yc) node[shape=circle, fill=white] {};
\draw[line width=1pt] (B) .. controls (A') and (C') .. (D);
}
\newcommand{\rcrossup}[4]{
\tikzstyle{cross line}=[preaction={draw=white, -,line width=10pt}];
\pgfmathsetmacro{\xl}{{#1}}
\pgfmathsetmacro{\yb}{{#2}}
\pgfmathsetmacro{\yheight}{{#3}}
\pgfmathsetmacro{\xscale}{{#4}}
\pgfmathsetmacro{\yt}{\yb+\yheight};
\pgfmathsetmacro{\xr}{\xscale+\xl};
\pgfmathsetmacro{\yc}{\yb+\yheight/2};
\pgfmathsetmacro{\xc}{\xscale/2+\xl};

	\pgfmathparse{\xl}		\let\Xa\pgfmathresult
    \pgfmathparse{\yt}		\let\Ya\pgfmathresult
    \coordinate (A) at (\Xa,\Ya);
	\pgfmathparse{\xl+(0.3*\xscale)}		\let\Xaa\pgfmathresult
    \pgfmathparse{\yb+(0.9*\yheight)}		\let\Yaa\pgfmathresult
    \coordinate (A') at (\Xaa,\Yaa);
	\pgfmathparse{\xl}		\let\Xb\pgfmathresult
    \pgfmathparse{\yb}		\let\Yb\pgfmathresult
    \coordinate (B) at (\Xb,\Yb);
	\pgfmathparse{\xl+(0.3*\xscale)}		\let\Xbb\pgfmathresult
    \pgfmathparse{\yb+(0.1*\yheight)}		\let\Ybb\pgfmathresult
    \coordinate (B') at (\Xbb,\Ybb);
	\pgfmathparse{\xr}		\let\Xc\pgfmathresult
    \pgfmathparse{\yb}		\let\Yc\pgfmathresult
    \coordinate (C) at (\Xc,\Yc);
	\pgfmathparse{\xl+(0.7*\xscale)}		\let\Xcc\pgfmathresult
    \pgfmathparse{\yb+(0.1*\yheight)}		\let\Ycc\pgfmathresult
    \coordinate (C') at (\Xcc,\Ycc);
	\pgfmathparse{\xr}		\let\Xd\pgfmathresult
    \pgfmathparse{\yt}		\let\Yd\pgfmathresult
    \coordinate (D) at (\Xd,\Yd);
	\pgfmathparse{\xl+(0.7*\xscale)}		\let\Xdd\pgfmathresult
    \pgfmathparse{\yb+(0.9*\yheight)}		\let\Ydd\pgfmathresult
    \coordinate (D') at (\Xdd,\Ydd);
	\pgfmathparse{\xc}		\let\Xe\pgfmathresult
    \pgfmathparse{\yc}		\let\Ye\pgfmathresult
    \coordinate (E) at (\Xe,\Ye);

\draw[line width=1pt, <-] (A) .. controls (B') and (D') .. (C);
\draw (\xc, \yc) node[shape=circle, fill=white] {};
\draw[line width=1pt, ->] (B) .. controls (A') and (C') .. (D);
}
\newcommand{\NESE}[4]{
\tikzstyle{cross line}=[preaction={draw=white, -,line width=10pt}];
\pgfmathsetmacro{\xl}{{#1}}
\pgfmathsetmacro{\yb}{{#2}}
\pgfmathsetmacro{\yheight}{{#3}}
\pgfmathsetmacro{\xscale}{{#4}}
\pgfmathsetmacro{\yt}{\yb+\yheight};
\pgfmathsetmacro{\xr}{\xscale+\xl};
\pgfmathsetmacro{\yc}{\yb+\yheight/2};
\pgfmathsetmacro{\xc}{\xscale/2+\xl};

	\pgfmathparse{\xl}		\let\Xa\pgfmathresult
    \pgfmathparse{\yt}		\let\Ya\pgfmathresult
    \coordinate (A) at (\Xa,\Ya);
	\pgfmathparse{\xl+(0.3*\xscale)}		\let\Xaa\pgfmathresult
    \pgfmathparse{\yb+(0.9*\yheight)}		\let\Yaa\pgfmathresult
    \coordinate (A') at (\Xaa,\Yaa);
	\pgfmathparse{\xl}		\let\Xb\pgfmathresult
    \pgfmathparse{\yb}		\let\Yb\pgfmathresult
    \coordinate (B) at (\Xb,\Yb);
	\pgfmathparse{\xl+(0.3*\xscale)}		\let\Xbb\pgfmathresult
    \pgfmathparse{\yb+(0.1*\yheight)}		\let\Ybb\pgfmathresult
    \coordinate (B') at (\Xbb,\Ybb);
	\pgfmathparse{\xr}		\let\Xc\pgfmathresult
    \pgfmathparse{\yb}		\let\Yc\pgfmathresult
    \coordinate (C) at (\Xc,\Yc);
	\pgfmathparse{\xl+(0.7*\xscale)}		\let\Xcc\pgfmathresult
    \pgfmathparse{\yb+(0.1*\yheight)}		\let\Ycc\pgfmathresult
    \coordinate (C') at (\Xcc,\Ycc);
	\pgfmathparse{\xr}		\let\Xd\pgfmathresult
    \pgfmathparse{\yt}		\let\Yd\pgfmathresult
    \coordinate (D) at (\Xd,\Yd);
	\pgfmathparse{\xl+(0.7*\xscale)}		\let\Xdd\pgfmathresult
    \pgfmathparse{\yb+(0.9*\yheight)}		\let\Ydd\pgfmathresult
    \coordinate (D') at (\Xdd,\Ydd);
	\pgfmathparse{\xc}		\let\Xe\pgfmathresult
    \pgfmathparse{\yc}		\let\Ye\pgfmathresult
    \coordinate (E) at (\Xe,\Ye);

\draw[line width=1pt, ->] (A) .. controls (B') and (D') .. (C);
\draw (\xc, \yc) node[shape=circle, fill=white] {};
\draw[line width=1pt, ->] (B) .. controls (A') and (C') .. (D);
}
\newcommand{\SWNW}[4]{
\tikzstyle{cross line}=[preaction={draw=white, -,line width=10pt}];
\pgfmathsetmacro{\xl}{{#1}}
\pgfmathsetmacro{\yb}{{#2}}
\pgfmathsetmacro{\yheight}{{#3}}
\pgfmathsetmacro{\xscale}{{#4}}
\pgfmathsetmacro{\yt}{\yb+\yheight};
\pgfmathsetmacro{\xr}{\xscale+\xl};
\pgfmathsetmacro{\yc}{\yb+\yheight/2};
\pgfmathsetmacro{\xc}{\xscale/2+\xl};

	\pgfmathparse{\xl}		\let\Xa\pgfmathresult
    \pgfmathparse{\yt}		\let\Ya\pgfmathresult
    \coordinate (A) at (\Xa,\Ya);
	\pgfmathparse{\xl+(0.3*\xscale)}		\let\Xaa\pgfmathresult
    \pgfmathparse{\yb+(0.9*\yheight)}		\let\Yaa\pgfmathresult
    \coordinate (A') at (\Xaa,\Yaa);
	\pgfmathparse{\xl}		\let\Xb\pgfmathresult
    \pgfmathparse{\yb}		\let\Yb\pgfmathresult
    \coordinate (B) at (\Xb,\Yb);
	\pgfmathparse{\xl+(0.3*\xscale)}		\let\Xbb\pgfmathresult
    \pgfmathparse{\yb+(0.1*\yheight)}		\let\Ybb\pgfmathresult
    \coordinate (B') at (\Xbb,\Ybb);
	\pgfmathparse{\xr}		\let\Xc\pgfmathresult
    \pgfmathparse{\yb}		\let\Yc\pgfmathresult
    \coordinate (C) at (\Xc,\Yc);
	\pgfmathparse{\xl+(0.7*\xscale)}		\let\Xcc\pgfmathresult
    \pgfmathparse{\yb+(0.1*\yheight)}		\let\Ycc\pgfmathresult
    \coordinate (C') at (\Xcc,\Ycc);
	\pgfmathparse{\xr}		\let\Xd\pgfmathresult
    \pgfmathparse{\yt}		\let\Yd\pgfmathresult
    \coordinate (D) at (\Xd,\Yd);
	\pgfmathparse{\xl+(0.7*\xscale)}		\let\Xdd\pgfmathresult
    \pgfmathparse{\yb+(0.9*\yheight)}		\let\Ydd\pgfmathresult
    \coordinate (D') at (\Xdd,\Ydd);
	\pgfmathparse{\xc}		\let\Xe\pgfmathresult
    \pgfmathparse{\yc}		\let\Ye\pgfmathresult
    \coordinate (E) at (\Xe,\Ye);

\draw[line width=1pt, <-] (A) .. controls (B') and (D') .. (C);
\draw (\xc, \yc) node[shape=circle, fill=white] {};
\draw[line width=1pt, <-] (B) .. controls (A') and (C') .. (D);
}
\newcommand{\SENE}[4]{
\tikzstyle{cross line}=[preaction={draw=white, -,line width=10pt}];
\pgfmathsetmacro{\xl}{{#1}}
\pgfmathsetmacro{\yb}{{#2}}
\pgfmathsetmacro{\yheight}{{#3}}
\pgfmathsetmacro{\xscale}{{#4}}
\pgfmathsetmacro{\yt}{\yb+\yheight};
\pgfmathsetmacro{\xr}{\xscale+\xl};
\pgfmathsetmacro{\yc}{\yb+\yheight/2};
\pgfmathsetmacro{\xc}{\xscale/2+\xl};

	\pgfmathparse{\xl}		\let\Xa\pgfmathresult
    \pgfmathparse{\yt}		\let\Ya\pgfmathresult
    \coordinate (A) at (\Xa,\Ya);
	\pgfmathparse{\xl+(0.3*\xscale)}		\let\Xaa\pgfmathresult
    \pgfmathparse{\yb+(0.9*\yheight)}		\let\Yaa\pgfmathresult
    \coordinate (A') at (\Xaa,\Yaa);
	\pgfmathparse{\xl}		\let\Xb\pgfmathresult
    \pgfmathparse{\yb}		\let\Yb\pgfmathresult
    \coordinate (B) at (\Xb,\Yb);
	\pgfmathparse{\xl+(0.3*\xscale)}		\let\Xbb\pgfmathresult
    \pgfmathparse{\yb+(0.1*\yheight)}		\let\Ybb\pgfmathresult
    \coordinate (B') at (\Xbb,\Ybb);
	\pgfmathparse{\xr}		\let\Xc\pgfmathresult
    \pgfmathparse{\yb}		\let\Yc\pgfmathresult
    \coordinate (C) at (\Xc,\Yc);
	\pgfmathparse{\xl+(0.7*\xscale)}		\let\Xcc\pgfmathresult
    \pgfmathparse{\yb+(0.1*\yheight)}		\let\Ycc\pgfmathresult
    \coordinate (C') at (\Xcc,\Ycc);
	\pgfmathparse{\xr}		\let\Xd\pgfmathresult
    \pgfmathparse{\yt}		\let\Yd\pgfmathresult
    \coordinate (D) at (\Xd,\Yd);
	\pgfmathparse{\xl+(0.7*\xscale)}		\let\Xdd\pgfmathresult
    \pgfmathparse{\yb+(0.9*\yheight)}		\let\Ydd\pgfmathresult
    \coordinate (D') at (\Xdd,\Ydd);
	\pgfmathparse{\xc}		\let\Xe\pgfmathresult
    \pgfmathparse{\yc}		\let\Ye\pgfmathresult
    \coordinate (E) at (\Xe,\Ye);

\draw[line width=1pt, ->] (B) .. controls (A') and (C') .. (D);
\draw (\xc, \yc) node[shape=circle, fill=white] {};
\draw[line width=1pt, ->] (A) .. controls (B') and (D') .. (C);
}
\newcommand{\NWSW}[4]{
\tikzstyle{cross line}=[preaction={draw=white, -,line width=10pt}];
\pgfmathsetmacro{\xl}{{#1}}
\pgfmathsetmacro{\yb}{{#2}}
\pgfmathsetmacro{\yheight}{{#3}}
\pgfmathsetmacro{\xscale}{{#4}}
\pgfmathsetmacro{\yt}{\yb+\yheight};
\pgfmathsetmacro{\xr}{\xscale+\xl};
\pgfmathsetmacro{\yc}{\yb+\yheight/2};
\pgfmathsetmacro{\xc}{\xscale/2+\xl};

	\pgfmathparse{\xl}		\let\Xa\pgfmathresult
    \pgfmathparse{\yt}		\let\Ya\pgfmathresult
    \coordinate (A) at (\Xa,\Ya);
	\pgfmathparse{\xl+(0.3*\xscale)}		\let\Xaa\pgfmathresult
    \pgfmathparse{\yb+(0.9*\yheight)}		\let\Yaa\pgfmathresult
    \coordinate (A') at (\Xaa,\Yaa);
	\pgfmathparse{\xl}		\let\Xb\pgfmathresult
    \pgfmathparse{\yb}		\let\Yb\pgfmathresult
    \coordinate (B) at (\Xb,\Yb);
	\pgfmathparse{\xl+(0.3*\xscale)}		\let\Xbb\pgfmathresult
    \pgfmathparse{\yb+(0.1*\yheight)}		\let\Ybb\pgfmathresult
    \coordinate (B') at (\Xbb,\Ybb);
	\pgfmathparse{\xr}		\let\Xc\pgfmathresult
    \pgfmathparse{\yb}		\let\Yc\pgfmathresult
    \coordinate (C) at (\Xc,\Yc);
	\pgfmathparse{\xl+(0.7*\xscale)}		\let\Xcc\pgfmathresult
    \pgfmathparse{\yb+(0.1*\yheight)}		\let\Ycc\pgfmathresult
    \coordinate (C') at (\Xcc,\Ycc);
	\pgfmathparse{\xr}		\let\Xd\pgfmathresult
    \pgfmathparse{\yt}		\let\Yd\pgfmathresult
    \coordinate (D) at (\Xd,\Yd);
	\pgfmathparse{\xl+(0.7*\xscale)}		\let\Xdd\pgfmathresult
    \pgfmathparse{\yb+(0.9*\yheight)}		\let\Ydd\pgfmathresult
    \coordinate (D') at (\Xdd,\Ydd);
	\pgfmathparse{\xc}		\let\Xe\pgfmathresult
    \pgfmathparse{\yc}		\let\Ye\pgfmathresult
    \coordinate (E) at (\Xe,\Ye);

\draw[line width=1pt, <-] (B) .. controls (A') and (C') .. (D);
\draw (\xc, \yc) node[shape=circle, fill=white] {};
\draw[line width=1pt, <-] (A) .. controls (B') and (D') .. (C);
}
\newcommand{\rcrossdown}[4]{
\tikzstyle{cross line}=[preaction={draw=white, -,line width=10pt}];
\pgfmathsetmacro{\xl}{{#1}}
\pgfmathsetmacro{\yb}{{#2}}
\pgfmathsetmacro{\yheight}{{#3}}
\pgfmathsetmacro{\xscale}{{#4}}
\pgfmathsetmacro{\yt}{\yb+\yheight};
\pgfmathsetmacro{\xr}{\xscale+\xl};
\pgfmathsetmacro{\yc}{\yb+\yheight/2};
\pgfmathsetmacro{\xc}{\xscale/2+\xl};

	\pgfmathparse{\xl}		\let\Xa\pgfmathresult
    \pgfmathparse{\yt}		\let\Ya\pgfmathresult
    \coordinate (A) at (\Xa,\Ya);
	\pgfmathparse{\xl+(0.3*\xscale)}		\let\Xaa\pgfmathresult
    \pgfmathparse{\yb+(0.9*\yheight)}		\let\Yaa\pgfmathresult
    \coordinate (A') at (\Xaa,\Yaa);
	\pgfmathparse{\xl}		\let\Xb\pgfmathresult
    \pgfmathparse{\yb}		\let\Yb\pgfmathresult
    \coordinate (B) at (\Xb,\Yb);
	\pgfmathparse{\xl+(0.3*\xscale)}		\let\Xbb\pgfmathresult
    \pgfmathparse{\yb+(0.1*\yheight)}		\let\Ybb\pgfmathresult
    \coordinate (B') at (\Xbb,\Ybb);
	\pgfmathparse{\xr}		\let\Xc\pgfmathresult
    \pgfmathparse{\yb}		\let\Yc\pgfmathresult
    \coordinate (C) at (\Xc,\Yc);
	\pgfmathparse{\xl+(0.7*\xscale)}		\let\Xcc\pgfmathresult
    \pgfmathparse{\yb+(0.1*\yheight)}		\let\Ycc\pgfmathresult
    \coordinate (C') at (\Xcc,\Ycc);
	\pgfmathparse{\xr}		\let\Xd\pgfmathresult
    \pgfmathparse{\yt}		\let\Yd\pgfmathresult
    \coordinate (D) at (\Xd,\Yd);
	\pgfmathparse{\xl+(0.7*\xscale)}		\let\Xdd\pgfmathresult
    \pgfmathparse{\yb+(0.9*\yheight)}		\let\Ydd\pgfmathresult
    \coordinate (D') at (\Xdd,\Ydd);
	\pgfmathparse{\xc}		\let\Xe\pgfmathresult
    \pgfmathparse{\yc}		\let\Ye\pgfmathresult
    \coordinate (E) at (\Xe,\Ye);

\draw[line width=1pt, ->] (A) .. controls (B') and (D') .. (C);
\draw (\xc, \yc) node[shape=circle, fill=white] {};
\draw[line width=1pt, <-] (B) .. controls (A') and (C') .. (D);
}
\newcommand{\lcross}[4]{
\tikzstyle{cross line}=[preaction={draw=white, -,line width=10pt}];
\pgfmathsetmacro{\xl}{{#1}}
\pgfmathsetmacro{\yb}{{#2}}
\pgfmathsetmacro{\yheight}{{#3}}
\pgfmathsetmacro{\xscale}{{#4}}
\pgfmathsetmacro{\yt}{\yb+\yheight};
\pgfmathsetmacro{\xr}{\xscale+\xl};
\pgfmathsetmacro{\yc}{\yb+\yheight/2};
\pgfmathsetmacro{\xc}{\xscale/2+\xl};

	\pgfmathparse{\xl}		\let\Xa\pgfmathresult
    \pgfmathparse{\yt}		\let\Ya\pgfmathresult
    \coordinate (A) at (\Xa,\Ya);
	\pgfmathparse{\xl+(0.3*\xscale)}		\let\Xaa\pgfmathresult
    \pgfmathparse{\yb+(0.9*\yheight)}		\let\Yaa\pgfmathresult
    \coordinate (A') at (\Xaa,\Yaa);
	\pgfmathparse{\xl}		\let\Xb\pgfmathresult
    \pgfmathparse{\yb}		\let\Yb\pgfmathresult
    \coordinate (B) at (\Xb,\Yb);
	\pgfmathparse{\xl+(0.3*\xscale)}		\let\Xbb\pgfmathresult
    \pgfmathparse{\yb+(0.1*\yheight)}		\let\Ybb\pgfmathresult
    \coordinate (B') at (\Xbb,\Ybb);
	\pgfmathparse{\xr}		\let\Xc\pgfmathresult
    \pgfmathparse{\yb}		\let\Yc\pgfmathresult
    \coordinate (C) at (\Xc,\Yc);
	\pgfmathparse{\xl+(0.7*\xscale)}		\let\Xcc\pgfmathresult
    \pgfmathparse{\yb+(0.1*\yheight)}		\let\Ycc\pgfmathresult
    \coordinate (C') at (\Xcc,\Ycc);
	\pgfmathparse{\xr}		\let\Xd\pgfmathresult
    \pgfmathparse{\yt}		\let\Yd\pgfmathresult
    \coordinate (D) at (\Xd,\Yd);
	\pgfmathparse{\xl+(0.7*\xscale)}		\let\Xdd\pgfmathresult
    \pgfmathparse{\yb+(0.9*\yheight)}		\let\Ydd\pgfmathresult
    \coordinate (D') at (\Xdd,\Ydd);
	\pgfmathparse{\xc}		\let\Xe\pgfmathresult
    \pgfmathparse{\yc}		\let\Ye\pgfmathresult
    \coordinate (E) at (\Xe,\Ye);

\draw[line width=1pt] (B) .. controls (A') and (C') .. (D);
\draw (\xc, \yc) node[shape=circle, fill=white] {};
\draw[line width=1pt] (A) .. controls (B') and (D') .. (C);
}
\newcommand{\lcrossup}[4]{
\tikzstyle{cross line}=[preaction={draw=white, -,line width=10pt}];
\pgfmathsetmacro{\xl}{{#1}}
\pgfmathsetmacro{\yb}{{#2}}
\pgfmathsetmacro{\yheight}{{#3}}
\pgfmathsetmacro{\xscale}{{#4}}
\pgfmathsetmacro{\yt}{\yb+\yheight};
\pgfmathsetmacro{\xr}{\xscale+\xl};
\pgfmathsetmacro{\yc}{\yb+\yheight/2};
\pgfmathsetmacro{\xc}{\xscale/2+\xl};

	\pgfmathparse{\xl}		\let\Xa\pgfmathresult
    \pgfmathparse{\yt}		\let\Ya\pgfmathresult
    \coordinate (A) at (\Xa,\Ya);
	\pgfmathparse{\xl+(0.3*\xscale)}		\let\Xaa\pgfmathresult
    \pgfmathparse{\yb+(0.9*\yheight)}		\let\Yaa\pgfmathresult
    \coordinate (A') at (\Xaa,\Yaa);
	\pgfmathparse{\xl}		\let\Xb\pgfmathresult
    \pgfmathparse{\yb}		\let\Yb\pgfmathresult
    \coordinate (B) at (\Xb,\Yb);
	\pgfmathparse{\xl+(0.3*\xscale)}		\let\Xbb\pgfmathresult
    \pgfmathparse{\yb+(0.1*\yheight)}		\let\Ybb\pgfmathresult
    \coordinate (B') at (\Xbb,\Ybb);
	\pgfmathparse{\xr}		\let\Xc\pgfmathresult
    \pgfmathparse{\yb}		\let\Yc\pgfmathresult
    \coordinate (C) at (\Xc,\Yc);
	\pgfmathparse{\xl+(0.7*\xscale)}		\let\Xcc\pgfmathresult
    \pgfmathparse{\yb+(0.1*\yheight)}		\let\Ycc\pgfmathresult
    \coordinate (C') at (\Xcc,\Ycc);
	\pgfmathparse{\xr}		\let\Xd\pgfmathresult
    \pgfmathparse{\yt}		\let\Yd\pgfmathresult
    \coordinate (D) at (\Xd,\Yd);
	\pgfmathparse{\xl+(0.7*\xscale)}		\let\Xdd\pgfmathresult
    \pgfmathparse{\yb+(0.9*\yheight)}		\let\Ydd\pgfmathresult
    \coordinate (D') at (\Xdd,\Ydd);
	\pgfmathparse{\xc}		\let\Xe\pgfmathresult
    \pgfmathparse{\yc}		\let\Ye\pgfmathresult
    \coordinate (E) at (\Xe,\Ye);

\draw[line width=1pt, ->] (B) .. controls (A') and (C') .. (D);
\draw (\xc, \yc) node[shape=circle, fill=white] {};
\draw[line width=1pt, <-] (A) .. controls (B') and (D') .. (C);
}
\newcommand{\lcrossdown}[4]{
\tikzstyle{cross line}=[preaction={draw=white, -,line width=10pt}];
\pgfmathsetmacro{\xl}{{#1}}
\pgfmathsetmacro{\yb}{{#2}}
\pgfmathsetmacro{\yheight}{{#3}}
\pgfmathsetmacro{\xscale}{{#4}}
\pgfmathsetmacro{\yt}{\yb+\yheight};
\pgfmathsetmacro{\xr}{\xscale+\xl};
\pgfmathsetmacro{\yc}{\yb+\yheight/2};
\pgfmathsetmacro{\xc}{\xscale/2+\xl};

	\pgfmathparse{\xl}		\let\Xa\pgfmathresult
    \pgfmathparse{\yt}		\let\Ya\pgfmathresult
    \coordinate (A) at (\Xa,\Ya);
	\pgfmathparse{\xl+(0.3*\xscale)}		\let\Xaa\pgfmathresult
    \pgfmathparse{\yb+(0.9*\yheight)}		\let\Yaa\pgfmathresult
    \coordinate (A') at (\Xaa,\Yaa);
	\pgfmathparse{\xl}		\let\Xb\pgfmathresult
    \pgfmathparse{\yb}		\let\Yb\pgfmathresult
    \coordinate (B) at (\Xb,\Yb);
	\pgfmathparse{\xl+(0.3*\xscale)}		\let\Xbb\pgfmathresult
    \pgfmathparse{\yb+(0.1*\yheight)}		\let\Ybb\pgfmathresult
    \coordinate (B') at (\Xbb,\Ybb);
	\pgfmathparse{\xr}		\let\Xc\pgfmathresult
    \pgfmathparse{\yb}		\let\Yc\pgfmathresult
    \coordinate (C) at (\Xc,\Yc);
	\pgfmathparse{\xl+(0.7*\xscale)}		\let\Xcc\pgfmathresult
    \pgfmathparse{\yb+(0.1*\yheight)}		\let\Ycc\pgfmathresult
    \coordinate (C') at (\Xcc,\Ycc);
	\pgfmathparse{\xr}		\let\Xd\pgfmathresult
    \pgfmathparse{\yt}		\let\Yd\pgfmathresult
    \coordinate (D) at (\Xd,\Yd);
	\pgfmathparse{\xl+(0.7*\xscale)}		\let\Xdd\pgfmathresult
    \pgfmathparse{\yb+(0.9*\yheight)}		\let\Ydd\pgfmathresult
    \coordinate (D') at (\Xdd,\Ydd);
	\pgfmathparse{\xc}		\let\Xe\pgfmathresult
    \pgfmathparse{\yc}		\let\Ye\pgfmathresult
    \coordinate (E) at (\Xe,\Ye);

\draw[line width=1pt, <-] (B) .. controls (A') and (C') .. (D);
\draw (\xc, \yc) node[shape=circle, fill=white] {};
\draw[line width=1pt, ->] (A) .. controls (B') and (D') .. (C);
}
\title[]{On the center of two-parameter $(v,t)$-quantum groups}
\author[Z. Fan and Z. Xin]{Zhaobing Fan and Ziqi Xin}
\address{College of Mathematical Sciences, Harbin Engineering University, Harbin,  China, 150001}
\email{fanzhaobing@hrbeu.edu.cn  (Fan)}
\address{College of Mathematical Sciences, Harbin Engineering University, Harbin,  China, 150001}
\email{2016@hrbeu.edu.cn (Xin)}
\date{\today}
\keywords{quantum algebra, centre, Harish-Chandra homomorphism}
\begin{document}
\begin{abstract}
This paper mainly considers the centre of two-parameter quantum group $U_{v,t}$ \cite{FL} of finite type via an analogue of the Harish-Chandra homomorphism. Through combining the connection  between one-parameter quantum group case and two-parameter quantum group case, we get the description of centre in the sense of  Harish-Chandra homomorphism.
\end{abstract}
\maketitle

\setcounter{tocdepth}{1}
\tableofcontents

\section{Introduction}
 In the one-parameter quantum group case,  much work has been done on the centre of quantum groups  for the finite-dimensional simple Lie algebras, and also for generalized Kac-Moody algebras.
The approach taken in many of these papers is to define a nondegenerate bilinear form on the quantum group which is invariant under the adjoint action.   The next step involves constructing an algebra homomorphism $\xi$ called the Harish-Chandra homomorphism and proving the map $\xi$ is injective.The main difficulty lies in determining the image of $\xi$. In the two-parameter case for type $A_{n}$ \cite{BKL}, a new phenomenon arises: the $n$ odd and even cases behave differently. Additional central elements arise when $n$ is odd. This phenomenon complicates the description in two-parameter quantum groups.

In \cite{FL}, the first author and Li provided a noval presentation of
the two-parameter quantum algebra $U_{v,t}(\mathfrak g)$ by a geometric approach, where
both parameters $v$ and $t$ have geometric meaning.
Moreover, this presentation unifies the various quantum algebras in literature.
By various specialization, one can obtain one-parameter quantum algebras \cite{Lusztigbook},
two-parameter quantum algebras \cite{BW}, quantum superalgebras \cite{CFYW} and multi-parameter quantum algebras \cite{HPR}.

In \cite{FX}, they defined a skew-Hopf pairing on the deformed quantum algebra
which unifies various quantum algebras in literatures,
such as one-parameter quantum algebras \cite{Lusztigbook},
two-parameter quantum algebras \cite{BW}, quantum superalgebras \cite{CFYW} and multi-parameter quantum algebras \cite{HPR}.
This provided a tool to construct the ad-invariant bilinear form on two-parameter quantum groups.

In this paper, first we show the Harish-Chandra homomorphism on $U_{v,t}(g)$ is injective. We adjust the prove in \cite{BKL} to make it doesn't rely on the weight module representation. Then we construct some central elements based on weight representation theory and a bilinear form which is invariant under the adjoint action. Next we give a necessary and sufficient condition to show if there are some other central elements. Different from other's work, we consider the result from the perspective of grading on $U_{v,t}$. Furthermore, we define $U_{J}$, a set  of subalgebras  of  $U_{v,t}$  and show the image of Harish-Chandra homomorphism on these subalgebras contains the image of centre on the whole algebra. Finally, we show the existence of central elements on some certain degrees to make the consequence more accurate.

This paper is organized as follows. In Section 2, we recall the definition of two-parameter quantum algebra $U_{v,t}$ from \cite{FL}. Besides, we prove some related properties of Verma module of $U_{v,t}$. In Section 3, we construct the nondegenerate bilinear form on $U_{v,t}$ which is invariant under the adjoint action. By using this bilinear form, we construct some central elements using the  weight representation. In Section 4, we define the Harish-Chandra homomorphism $\xi$ on $U_{v,t}$ and show $\xi$ is injective.  In section 5, we analyse  the centre of two-parameter quantum groups via the connection between  one-parameter quantum groups and two parameter quantum groups. By the analysis, we give a necessary and sufficient condition to show if there exist some other central elements. Then we define a set  of subalgebras of  $U_{v,t}$  and show the image of Harish-Chandra homomorphism on these subalgebras contains the image of centre of the whole algebra and construct some central elements to make the consequence more accurate.

\section{The two-parameter quantum algebra $U_{v,t}$}\label{sec:twoparameter}

We briefly review the definition of the two-parameter quantum algebra $U_{v,t}$ in \cite{FL}.

Given a Cartan datum $(I, \cdot)$, let $\Omega=(\Omega_{ij})_{i,j\in I}$ be an integer matrix satisfying that
\begin{itemize}
  \item[(a)] $\Omega_{ii} \in \mathbb{Z}_{>0}$, $\Omega_{ij}\in \mathbb{Z}_{\leq 0}$ for all $i\neq j \in I$;
  \item[(b)] $\frac{\Omega_{ij}+\Omega_{ji}}{\Omega_{ii}}\in \mathbb{Z}_{\leq 0}$ for all $i\neq j \in I$;
  \item[(c)] the greatest common divisor of all $\Omega_{ii}$ is equal to 1.
\end{itemize}

To $\Omega$, we associate the following three bilinear forms on $\mathbb{Z}[I]$.
\begin{eqnarray*}
\langle i, j\rangle &=&
\Omega_{ij}, \quad \hspace{45pt}\forall i, j\in I. \label{eq47}\\
\begin{bmatrix} i,j \end{bmatrix}&=& 2\delta_{ij} \Omega_{ii} -\Omega_{ij}, \quad \forall i, j\in I. \label{eq48}\\
i\cdot j&=&\langle i, j\rangle +\langle j,i\rangle,  \quad \forall i, j\in I. \label{eq49}
\end{eqnarray*}

The matrix $\Omega$ is called of symmetric type if $\Omega_{ii}=1$, $\forall i \in I$. In this case, the associated Cartan datum is of symmetric type. For simplicity, we assume that $\Omega$ is of symmetric type.
\subsection{The free algebra $'\mathfrak{f}$}\label{sec:freealgebra}
For indeterminates $v$ and $t$,
 we set $v_i=v^{i\cdot i/2}$ and $t_i=t^{i\cdot i/2}$.
 Denoted by $v_{\nu}=\prod_i v_i^{\nu_i},\ t_{\nu}=\prod_i t_i^{\nu_i}$
 and ${\rm tr}(\nu) =\sum_{i\in I} \nu_i \in \mathbb{N}$,
 for any $\nu=\sum \nu_i i\in \mathbb{N}[I]$.

Let ${}'\! \mathfrak{f}$ be the free unital associative algebra over $\mathbb{Q}(v,t)$
generated by the symbols $ \theta_i,\ \forall i\in I$.
By setting the degree of the generator $\theta_i$ to be $i$,
the algebra ${}'\! \mathfrak{f}$ becomes an $\mathbb{N}[I]$-graded algebra.
For any $\nu\in \mathbb{N}[I]$, we denote by ${}'\!\mathfrak{f}_{\nu}$
the subspace of all homogenous elements of degree $\nu$.
We have ${}'\! \mathfrak{f}=\oplus_{\nu\in \mathbb{N}[I]}{}'\! \mathfrak{f}_{\nu}$
and denote by $|x|$ the degree of a homogenous element $x\in {}'\! \mathfrak{f}$.

\subsubsection{The tensor product ${}'\!\mathfrak{f}\otimes {}'\!\mathfrak{f}$}
On the tensor product ${}'\!\mathfrak{f}\otimes {}'\!\mathfrak{f}$,
we define an associative $\mathbb{Q}(v,t)$-algebra structure by
\begin{equation*}\label{eq25}
(x_1 \otimes x_2)(y_1 \otimes y_2)
=v^{|y_1|\cdot|x_2|}t^{\langle|y_1|,|x_2|\rangle-\langle |x_2|,|y_1|\rangle}x_1y_1 \otimes x_2y_2,
\end{equation*}
for homogeneous elements $x_1,x_2,y_1$ and $y_2$ in ${}'\!\mathfrak{f}$.
It is associative since the forms $\langle , \rangle$  and $``\cdot "$  are bilinear.

Let $r: {}'\!\mathfrak{f}\rightarrow  {}'\!\mathfrak{f}\otimes {}'\!\mathfrak{f}$
be the $\mathbb{Q}(v,t)$-algebra homomorphism such that
$$r(\theta_i)=\theta_i\otimes 1+1 \otimes \theta_i,\quad {\rm for\ all}\ i\in I.$$

\begin{prop}[]\label{prop:bilinearform}\cite[Proposition 13]{FL}
  There is a unique symmetric bilinear form {\rm (,)} on ${}'\!\mathfrak{f}$ with values in $\mathbb{Q}(v,t)$ such that
  \begin{itemize}
    \item[(a)] $(1,1)=1$;
    \item[(b)] $(\theta_i, \theta_j)=\delta_{ij}\frac{1}{1-v^{-2}_i}$,\quad for all $i, j \in I$;
    \item[(c)] $(x, y'y'')=(r(x), y' \otimes y'')$,\quad for all $x, y', y'' \in {}'\!\mathfrak{f}$;
    \item[(d)] $(x'x'', y)=(x'\otimes x'', r(y))$,\quad for all $x', x'', y \in {}'\!\mathfrak{f}$.
  \end{itemize}
  Here the bilinear form on ${}'\!\mathfrak{f} \otimes {}'\!\mathfrak{f}$ is defined by
  \begin{equation*}\label{eq74}
    (x_1 \otimes x_2, y_1 \otimes y_2)=t^{2[|x_1|,|x_2|]}(x_1, y_1) (x_2, y_2).
  \end{equation*}
\end{prop}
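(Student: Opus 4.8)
The plan is to separate uniqueness, which is a formal induction, from existence, where the real content lies, and to isolate symmetry of the form as the decisive point.

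First I would record that any form obeying (a)--(d) is graded, $(x,y)=0$ for $|x|\ne|y|$, which follows by induction since $r$ preserves the grading and $(\theta_i,\theta_j)$ vanishes for $i\ne j$. Uniqueness then follows by induction on $\mathrm{tr}(|y|)$: the case $|y|=0$ is fixed by (a), while for $|y|\ne0$ I write $y=\sum_k y_k\theta_k$ with $y_k$ of degree $|y|-k$ and apply (c) to get $(x,y)=\sum_k(r(x),y_k\otimes\theta_k)$; only the bidegree-$(|y|-k,k)$ component of $r(x)$ survives the pairing, so this expresses $(x,y)$ through the forms $(a,y_k)$ and $(b,\theta_k)$ in strictly smaller total degree (the boundary cases being fixed by (a) and (b), since an element of degree equal to a simple root $k$ is a scalar multiple of $\theta_k$). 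Hence at most one such form exists.

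For existence I would first introduce the twisted left derivations ${}_i r\colon{}'\mathfrak f_\nu\to{}'\mathfrak f_{\nu-i}$ as the degree-one component, in the first tensor factor, of $r$, so that ${}_i r(\theta_j)=\delta_{ij}$, and extract from $r(xy)=r(x)r(y)$ together with the twisted multiplication on ${}'\mathfrak f\otimes{}'\mathfrak f$ the Leibniz rule
\[
{}_i r(xy)={}_i r(x)\,y+v^{i\cdot|x|}\,t^{\langle i,|x|\rangle-\langle|x|,i\rangle}\,x\,{}_i r(y).
\]
Because ${}'\mathfrak f$ is free, one has ${}'\mathfrak f=\mathbb Q(v,t)\cdot1\oplus\bigoplus_i\theta_i\,{}'\mathfrak f$, so I can define the form unambiguously by the recursion
\[
(1,1)=1,\qquad(\theta_i x,y)=t^{2[i,|x|]}\,\tfrac{1}{1-v_i^{-2}}\,(x,\,{}_i r(y)),
\]
with $(1,y)=0=(x,1)$ in positive degree, inducting on $\mathrm{tr}(|y|)$. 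Property (a) is built in, and evaluating the recursion on generators recovers (b) since $[i,0]=0$ and ${}_i r(\theta_j)=\delta_{ij}$.

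It remains to prove symmetry, which I expect to be the main obstacle; once it holds, (d) with $x'=\theta_i$ is the defining recursion and extends to general $x'$ by induction via the Leibniz rule, while (c) follows from (d) because the auxiliary form on ${}'\mathfrak f\otimes{}'\mathfrak f$ is itself symmetric on matching bidegrees. I would prove $(x,y)=(y,x)$ by induction on $\mathrm{tr}(|x|)$, writing $x=\theta_i x'$ and $y=\theta_j y'$, peeling $\theta_i$ from the left of $(\theta_i x',\theta_j y')$, expanding ${}_i r(\theta_j y')$ by the Leibniz rule into a diagonal term $\delta_{ij}y'$ and a cross term, and using the inductive symmetry to re-peel the cross term from the left. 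Comparing with the same computation applied to $(\theta_j y',\theta_i x')$, the diagonal terms match directly, and the cross terms agree precisely when the accumulated $t$-exponents coincide, which reduces to the identity $\langle i,j\rangle-\langle j,i\rangle+[i,j]-[j,i]=0$; this holds because $\langle i,j\rangle=\Omega_{ij}$ and $[i,j]=-\Omega_{ij}$ for $i\ne j$, while the $v$-exponents match automatically by symmetry of $``\cdot"$. This cancellation of the two-parameter $v$- and $t$-twists is exactly the delicate bookkeeping the construction turns on, and with it symmetry, and hence the full proposition, follows.
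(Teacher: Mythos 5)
The paper does not prove this proposition; it is quoted verbatim from [FL, Proposition 13], so there is no internal proof to compare against. Your argument is the standard Lusztig-style one (uniqueness by induction on degree via (c), existence by the recursion $(\theta_i x,y)=t^{2[i,|x|]}(1-v_i^{-2})^{-1}(x,{}_ir(y))$ forced by (d) and the freeness decomposition ${}'\mathfrak f=\mathbb Q(v,t)\oplus\bigoplus_i\theta_i\,{}'\mathfrak f$), adapted to the two-parameter twist, and it is correct: in particular your reduction of symmetry to the identity $\langle i,j\rangle-\langle j,i\rangle=[j,i]-[i,j]$ checks out, since $[j,i]-[i,j]=\Omega_{ij}-\Omega_{ji}$ by the definition of $[\,,\,]$. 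The only place where you defer real work is the extension of (d) from $x'=\theta_i$ to arbitrary $x'$, which needs the coassociativity of $r$ (or, equivalently, repeated use of the twisted Leibniz rule for ${}_ir$ on monomials); this is routine but should be stated, since it is what guarantees the recursively defined form actually satisfies (d) for every factorization and not just the defining one.
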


\subsubsection{The maps $r_i$ and ${}_ir$}
For any $i\in I$, let $r_i ({\rm resp.}\ {}_ir): {}'\! \mathfrak{f}\rightarrow {}'\! \mathfrak{f}$
be the unique linear map satisfying the following properties.
\begin{equation*}
  \begin{split}
    r_i(1)=0,\ r_i(\theta_j)=\delta_{ij},\ \forall j\in I\ {\rm and}\
    r_i(xy)=v^{i\cdot |y|}t^{\langle |y|,i\rangle-\langle i, |y|\rangle}r_i(x)y+xr_i(y);\\
    {}_ir(1)=0,\ {}_ir(\theta_j)=\delta_{ij},\ \forall j\in I\ {\rm and}\
    {}_ir(xy)={}_ir(x)y+v^{i\cdot |x|}t^{\langle i,|x|\rangle-\langle |x|,i\rangle}x{}_ir(y).
  \end{split}
\end{equation*}
By an induction on $|x|$, we can show that $r(x)=r_i(x)\otimes \theta_i$ (resp. $r(x)= \theta_i\otimes{}_ir(x)$)
plus other terms.

\subsubsection{Quantum serre relations}

Let $\mathfrak{J}$ be the radical of the bilinear form $(-,-)$.
It is clear that $\mathfrak{J}$ is a two-sided ideal of ${}'\!\mathfrak{f}$.
Denote the quotient algebra of ${}'\!\mathfrak{f}$ by
\[\mathfrak{f}={}'\!\mathfrak{f}/\mathfrak{J}.\]

Recall the quantum integers from \cite{FL}.
For any $n\in \mathbb{N}$, we have
$$[n]_{v, t}=\frac{(vt)^n-(vt^{-1})^{-n}}{vt-(vt^{-1})^{-1}},\qquad
[n]^!_{v, t}=\prod_{k=1}^n[k]_{v,t}.$$
Denote by
 $$\theta_i^{(n)}=\frac{\theta_i^n}{[n]^!_{v_i, t_i}}.$$

\begin{prop}\label{prop15}~\cite[Proposition 14]{FL}
  The generators $\theta_i$ of $\mathfrak{f}$ satisfy the following identities.
  $$
  \sum_{p+p'=1-2\frac{i\cdot j}{i\cdot i}}
  (-1)^pt_i^{-p(p'-2\frac{\langle i,j\rangle}{i\cdot i}+
  2\frac{\langle j,i\rangle}{i\cdot i})}\theta_i^{(p)}\theta_j\theta_i^{(p')}=0,
  \quad \forall i\not =j\in I.
  $$
\end{prop}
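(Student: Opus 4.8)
The plan is to prove the quantum Serre relations by showing that the relevant alternating sum lies in the radical $\mathfrak{J}$ of the bilinear form, and then descending to the quotient $\mathfrak{f}={}'\!\mathfrak{f}/\mathfrak{J}$. Set $e = 1 - 2\frac{i\cdot j}{i\cdot i}$ and define the element
\[
S_{ij} = \sum_{p+p'=e}(-1)^p t_i^{-p(p'-2\frac{\langle i,j\rangle}{i\cdot i}+2\frac{\langle j,i\rangle}{i\cdot i})}\theta_i^{(p)}\theta_j\theta_i^{(p')} \in {}'\!\mathfrak{f}.
\]
It suffices to prove $S_{ij}\in\mathfrak{J}$, i.e.\ $(S_{ij}, y)=0$ for all $y$. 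By the nondegeneracy structure of the form and the homogeneity of $S_{ij}$ (it has degree $e\cdot i + j$), it is enough to test against monomials $y$ of the same degree, and by property (d) of Proposition~\ref{prop:bilinearform} this reduces to computing iterated applications of the twisted derivations $r_k$ and ${}_kr$.

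First I would record the key computational input: since $S_{ij}$ lives in the span of $\theta_i^a\theta_j\theta_i^b$ with $a+b=e$, and the only generators appearing are $\theta_i$ and $\theta_j$, the vanishing $(S_{ij},y)=0$ need only be checked for $y$ a product of $a'$ copies of $\theta_i$ and one $\theta_j$. Using property (c), I would reduce testing against $y$ to repeated use of the maps $r_i$ and $r_j$; concretely the standard strategy is to show $r_k(S_{ij})=0$ for every $k\in I$, since then $(S_{ij},y)=(r(S_{ij}),\cdots)$ unwinds to $0$ by induction on $|y|$, using $(S_{ij},1)=0$ (true as $S_{ij}$ has positive degree). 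The cases $k\neq i,j$ are immediate because $S_{ij}$ contains no $\theta_k$. The case $k=j$ and the case $k=i$ are the substance: here I would apply the twisted Leibniz rule for $r_k$ from Section~\ref{sec:freealgebra} to each term $\theta_i^{(p)}\theta_j\theta_i^{(p')}$, collect the resulting scalar prefactors $v^{\bullet}t^{\bullet}$, and verify that the alternating signs $(-1)^p$ together with the precisely chosen exponent $-p(p'-2\frac{\langle i,j\rangle}{i\cdot i}+2\frac{\langle j,i\rangle}{i\cdot i})$ cause everything to telescope.

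The decisive identity will be a quantum-binomial cancellation: after applying $r_i$, consecutive terms $\theta_i^{(p)}\theta_i^{(p')}$ combine via the relation $\theta_i^{(p)}\theta_i = [p+1]_{v_i,t_i}\,\theta_i^{(p+1)}$, and the sum reorganizes into something of the shape $\sum_p (-1)^p \bigl([\,p'\,]_{v_i,t_i} - (\text{$v,t$-power})[\,p\,]_{v_i,t_i}\bigr)(\cdots)$, which vanishes by the Pascal-type recurrence for $[n]_{v_i,t_i}$. The chosen $t_i$-exponent is exactly what is needed to match the $v_i,t_i$-weights coming from moving $r_i$ past $\theta_j$ via the factor $v^{i\cdot|y|}t^{\langle|y|,i\rangle-\langle i,|y|\rangle}$, so that the two halves of each Pascal relation align.

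The main obstacle I anticipate is the bookkeeping of the $v$- and $t$-exponents when commuting $r_i$ (or $r_j$) past $\theta_j$ (or $\theta_i$): the twist depends on $i\cdot|y|$, $\langle|y|,i\rangle$ and $\langle i,|y|\rangle$, and one must track how these change as the derivation lands on the left versus the right factor in each term of $S_{ij}$. The delicate point is checking that the specific exponent $2\frac{\langle i,j\rangle}{i\cdot i}-2\frac{\langle j,i\rangle}{i\cdot i}$ appearing in $S_{ij}$ is precisely the correction that makes the $t$-asymmetry (present in the two-parameter setting but absent in Lusztig's one-parameter case) cancel. Once the $k=i$ computation is done, the $k=j$ computation is comparatively easy since $\theta_j$ appears to the first power, so $r_j$ acts on a single slot; I expect it to follow from the same telescoping mechanism with $e+1$ terms collapsing to zero.
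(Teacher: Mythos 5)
The paper itself gives no proof of this proposition---it is quoted verbatim from \cite[Proposition 14]{FL}---and your outline is precisely the standard Lusztig-style argument used there: show the Serre element lies in the radical $\mathfrak{J}$ by checking that the twisted derivations $r_i$ and $r_j$ annihilate it, with the $t$-exponent in the alternating sum calibrated so that the quantum-binomial/Pascal cancellation goes through in the two-parameter setting. Your sketch identifies the right reduction (positive-degree elements killed by all $r_k$ lie in $\mathfrak{J}$) and the right decisive identities, so it is the same approach as the cited source, though of course the telescoping computations themselves would still need to be written out to make it a complete proof.
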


\subsection{ The two-parameter quantum algebra $U_{v,t}$}\label{sec:presentation}
By Drinfeld double construction, we get the following presentation of
the entire two-parameter quantum algebra $U_{v,t}$,
generated by symbols $E_i, F_i, K_i^{\pm 1}, K_i'^{\pm 1}, \forall i\in I$,
and subjects to the following relations.
\allowdisplaybreaks
\begin{eqnarray*}
  (R1)& &K_i^{\pm 1}K_i^{\mp 1}=K'^{\pm 1}_iK'^{\mp 1}_i=1.\\
  (R2)& &K_iE_jK^{-1}_i=v^{i\cdot j}t^{\langle j,i\rangle-\langle i,j\rangle} E_j,\ \ K'_iE_jK'^{-1}_i=v^{-i\cdot j}t^{\langle j,i\rangle-\langle i,j\rangle}E_j,\\
      & &K'_iF_jK'^{-1}_i=v^{i\cdot j}t^{\langle i,j\rangle-\langle j,i\rangle} F_j,\ \ K_iF_jK^{-1}_i=v^{-i\cdot j}t^{\langle i,j\rangle-\langle j,i \rangle}F_j.\\
  (R3)& &E_iF_j-F_j E_i=\delta_{ij}\frac{K_i-K'_i}{v_i-v^{-1}_i}.\\
  (R4)& &\sum_{p+p'=1-2\frac{i\cdot j}{i\cdot i}}(-1)^pt_i^{-p(p'-2\frac{\langle i,j\rangle}{i\cdot i}+2\frac{\langle j,i\rangle}{i\cdot i})}
         E_i^{(p)}E_j E_i^{(p')}=0, \quad {\rm if}\ i\not =j,\\
  & &\sum_{p+p'=1-2\frac{i\cdot j}{i\cdot i}}(-1)^pt_i^{-p(p'-2\frac{\langle i,j\rangle}{i\cdot i}+2\frac{\langle j,i\rangle}{i\cdot i})}
         F_i^{(p')}F_j F_i^{(p)}=0, \quad {\rm if}\ i\not =j.\\
\end{eqnarray*}

 The algebra $U_{v,t}$ has a Hopf algebra structure with the comultiplication $\Delta$, the counit $\varepsilon$ and the antipode $S$ given as follows.
  $$\begin{array}{llll}
   &\Delta(K_i^{\pm 1})=K_i^{\pm 1} \otimes K_i^{\pm 1},&\Delta(K'^{\pm 1}_i)=K'^{\pm 1}_i \otimes K'^{\pm 1}_i, &\vspace{4pt}\\
   &\Delta(E_i)=E_i\otimes 1+K_i\otimes E_i,& \Delta(F_i)=1 \otimes F_i+F_i\otimes K'_i, & \vspace{4pt}\\
  &\varepsilon(K_i^{\pm 1})=\varepsilon(K'^{\pm 1}_i)=1,& \varepsilon(E_i)=\varepsilon(F_i)=0,& S(K_i^{\pm 1})=K_i^{\mp 1},\vspace{4pt}\\
  & S(K'^{\pm 1}_i)=K'^{\mp 1}_i,&
  S(E_i)=-K_i^{-1}E_i,&S(F_i)=-F_iK'^{-1}_i.
  \end{array}$$

Let $U_{v,t}^+$(resp. $U_{v,t}^-$) be a subalgebra of $U_{v,t}$ generated by $E_i$(resp. $F_i$).

\subsection{The module of $U_{v,t}$}
A $U_{v,t}$-module M is called  weight module if it admits a decomposition $M=\bigoplus_{\lambda \in \mathbb{N}[I]}M_{\lambda}$ of vector spaces such that
$$M_{\lambda}=\{m \in M| K_{i}m=v^{i\cdot\lambda}c_{i.\lambda}m, K'_{i}m=v^{-i\cdot\lambda}c_{i,\lambda}m, \forall i \in I  \},   $$
where
$$c_{i,\lambda}=t^{\langle\lambda,i\rangle-\langle i,\lambda\rangle}.$$

 let $\Lambda$ and Q be the weight lattice and the root lattice, respectively. And Let $\Lambda^{+}$ denote the dominant weights.

  Fix $\lambda \in \Lambda$ and assume that $ J_{v,t }(\lambda) $ is the left ideal of $U_{v,t}(g)$ generated by $E_{i}$, $K_{i}-v^{i\cdot\lambda}t^{\langle\lambda,i\rangle-\langle i,\lambda\rangle}$, and $K_{i}^{'}-v^{-i\cdot\lambda}t^{\langle\lambda,i\rangle-\langle i,\lambda\rangle}$ for all $i \in I$.  Denote by $M_{v,t}(\lambda)=U_{v,t}(g)/J_{v,t}(\lambda)$ the Verma module of two-parameter quantum group. This is a $U_{v,t}$-module by left multiplication and the highest weight vector $v_{\lambda}=1+J_{v,t}(\lambda)$.

 By analogue with the one-parameter quantum group case, $M_{v,t}(g)$ has the following proposition.
\begin{prop} $M_{v,t}(\lambda)$ satisfies:
\begin{itemize}
   \item[(1)] As a $U_{v,t}$-module, $M_{v,t}(\lambda)$ is free of rank 1, generated by the highest weight vector $v_{\lambda}=1+J_{v,t}(\lambda)$ ;
   \item[(2)] Every highest weight  $U_{v,t}$-module with highest weight $\lambda$ is a homomorphic image of $M_{v,t}(g)$;
   \item[(3)] The Verma module $M_{v,t}(g)$ has a unique maximal submodule.
\end{itemize}
\end{prop}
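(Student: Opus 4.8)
The plan is to derive all three assertions from the triangular (PBW-type) decomposition that comes with the Drinfeld double presentation of $\S\ref{sec:presentation}$, so the first task is to record that multiplication induces a vector-space isomorphism
\[
U_{v,t}^- \otimes U_{v,t}^0 \otimes U_{v,t}^+ \xrightarrow{\ \sim\ } U_{v,t},
\]
where $U_{v,t}^0$ is the subalgebra generated by the $K_i^{\pm1},K_i'^{\pm1}$. Writing $a_i=v^{i\cdot\lambda}t^{\langle\lambda,i\rangle-\langle i,\lambda\rangle}$ and $b_i=v^{-i\cdot\lambda}t^{\langle\lambda,i\rangle-\langle i,\lambda\rangle}$, let $B=U_{v,t}^0U_{v,t}^+$ be the Borel subalgebra and let $\mathbb{Q}(v,t)_\lambda$ be the one-dimensional $B$-module on which $E_i$ acts by $0$ and $K_i,K_i'$ act by $a_i,b_i$. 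For statement (1) I would observe that $J_{v,t}(\lambda)=U_{v,t}\cdot J_B$, where $J_B\subseteq B$ is the kernel of $B\twoheadrightarrow\mathbb{Q}(v,t)_\lambda$; hence $M_{v,t}(\lambda)=U_{v,t}\otimes_B\mathbb{Q}(v,t)_\lambda$ is the induced module. Because the displayed decomposition makes $U_{v,t}$ free as a right $B$-module on a PBW basis of $U_{v,t}^-$, the map $u\mapsto u\,v_\lambda$ gives an isomorphism $U_{v,t}^-\xrightarrow{\sim}M_{v,t}(\lambda)$ of left $U_{v,t}^-$-modules; concretely it is surjective since $E_iv_\lambda=0$ and $K_iv_\lambda,K_i'v_\lambda$ are scalar multiples of $v_\lambda$, and injective since $u\in U_{v,t}^-$ with $u\,v_\lambda=0$ lies in $J_{v,t}(\lambda)\cap U_{v,t}^-=0$ by directness. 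Thus $M_{v,t}(\lambda)$ is free of rank $1$ over $U_{v,t}^-$ with basis $v_\lambda$.

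For statement (2) I would use the resulting universal property. Given a highest weight module $M$ with highest weight vector $w$ of weight $\lambda$, so that $E_iw=0$, $K_iw=a_iw$ and $K_i'w=b_iw$, the map $\varphi\colon U_{v,t}\to M$, $u\mapsto uw$, is a surjective $U_{v,t}$-module homomorphism because $w$ generates $M$. The generators of $J_{v,t}(\lambda)$ are precisely the elements annihilating $w$, so $J_{v,t}(\lambda)\subseteq\ker\varphi$ and $\varphi$ factors through $M_{v,t}(\lambda)=U_{v,t}/J_{v,t}(\lambda)$, yielding the desired surjection $M_{v,t}(\lambda)\twoheadrightarrow M$.

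For statement (3) I would first upgrade (1) to a weight-space decomposition: using the $\mathbb{N}[I]$-grading on $U_{v,t}^-$ one gets $M_{v,t}(\lambda)=\bigoplus_{\nu\in\mathbb{N}[I]}(U_{v,t}^-)_{-\nu}\,v_\lambda$, each summand being the weight space of weight $\lambda-\nu$, with the top space $M_{v,t}(\lambda)_\lambda=\mathbb{Q}(v,t)\,v_\lambda$ one-dimensional. Since $v$ is an indeterminate, the joint $(K_i,K_i')$-eigenvalues separate distinct weights in $\lambda-\mathbb{N}[I]$, so every submodule is stable under these eigenspace projections and hence is a weight submodule, i.e. the direct sum of its weight spaces. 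A submodule $N$ containing a nonzero multiple of $v_\lambda$ contains $v_\lambda$, hence equals $M_{v,t}(\lambda)$; therefore every proper submodule lies in $\bigoplus_{\nu\neq0}M_{v,t}(\lambda)_{\lambda-\nu}$ and misses $v_\lambda$. The sum $N_{\max}$ of all proper submodules again misses $v_\lambda$, so it is itself proper, and it is then manifestly the unique maximal submodule.

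The single nontrivial input is the triangular decomposition, equivalently the injectivity of $u\mapsto u\,v_\lambda$ on $U_{v,t}^-$, i.e. $J_{v,t}(\lambda)\cap U_{v,t}^-=0$; I expect this to be the main obstacle. It should follow from the Drinfeld double construction of $U_{v,t}$ exactly as in the one-parameter setting, and the only point needing care is to confirm that the extra parameter $t$ does not mix the three factors — namely that the straightening relations (R2)–(R3) move all $t$-powers into the Cartan and scalar part and never produce $U_{v,t}^-$-contributions out of $U_{v,t}^0U_{v,t}^+$, so that the separation of $U_{v,t}^-$, $U_{v,t}^0$ and $U_{v,t}^+$ is preserved.
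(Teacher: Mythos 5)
The paper's own ``proof'' is a single sentence deferring to the one-parameter case, and your argument is precisely that standard argument (triangular decomposition, induction from the Borel subalgebra, the universal property, and the sum of proper submodules avoiding $v_{\lambda}$) carried out in the two-parameter setting, so it is correct and matches the intended approach. Note only that item (1) as stated should read freeness of rank one over $U_{v,t}^{-}$ rather than over $U_{v,t}$ --- the reading you silently adopt --- and that the triangular decomposition you isolate as the one nontrivial input is indeed supplied by the Drinfeld double construction and is invoked without comment elsewhere in the paper.
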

\begin{proof}
The proof is almost the same with the one parameter quantum group case.
\end{proof}

\begin{thm}
For $\lambda \in \Lambda^{+}$, $i \in I$, set $n=\lambda\cdot i+1$, then there is a  homomorphism of $U_{v,t}$-module
$$f : M_{v,t}(\lambda-ni) \mapsto M_{v,t}(\lambda)$$
 with $f(v_{\lambda-ni})=F^{n}_{i}v_{\lambda}$.
\end{thm}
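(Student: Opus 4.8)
The plan is to realize $f$ through the universal property of the Verma module recorded in the preceding Proposition. By parts~(1) and~(2) there, a homomorphism out of $M_{v,t}(\mu)$ is pinned down by the image of the cyclic generator $v_{\mu}$, and for any $U_{v,t}$-module $N$ such a homomorphism into $N$ exists as soon as one produces a vector $w \in N$ that is annihilated by every $E_{j}$ and on which each $K_{j}$ and $K'_{j}$ acts by the scalar dictated by the weight $\mu$: since $M_{v,t}(\mu) = U_{v,t}/J_{v,t}(\mu)$, the assignment $u + J_{v,t}(\mu) \mapsto u\,w$ is then well defined because the generators of $J_{v,t}(\mu)$ kill $w$. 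Accordingly, it suffices to show that $w := F_{i}^{n} v_{\lambda} \in M_{v,t}(\lambda)$ is a highest weight vector of weight $\lambda - ni$, and then declare $f(v_{\lambda-ni}) = w$.

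First I would verify the weight of $w$. Iterating~(R2) gives $K_{j} F_{i}^{n} = \big(v^{-j\cdot i}\, t^{\langle j,i\rangle - \langle i,j\rangle}\big)^{n} F_{i}^{n} K_{j}$, and since $K_{j} v_{\lambda} = v^{j\cdot\lambda} c_{j,\lambda} v_{\lambda}$ this yields $K_{j} w = v^{\,j\cdot(\lambda-ni)} c_{j,\lambda-ni}\, w$, using the identity $c_{j,\lambda-ni} = c_{j,\lambda}\, t^{\,n(\langle j,i\rangle - \langle i,j\rangle)}$ that follows directly from $c_{j,\mu} = t^{\langle\mu,j\rangle - \langle j,\mu\rangle}$; the computation for $K'_{j}$ is identical. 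Next, when $j \neq i$ relation~(R3) is simply $E_{j}F_{i} = F_{i}E_{j}$, so $E_{j} w = F_{i}^{n} E_{j} v_{\lambda} = 0$.

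The heart of the argument is the case $j = i$. Here I would first prove, by induction on $n$, the commutator identity
\begin{equation*}
[E_{i}, F_{i}^{n}] = \frac{v_{i}^{n} - v_{i}^{-n}}{v_{i} - v_{i}^{-1}}\, F_{i}^{n-1}\, \frac{v_{i}^{-(n-1)} K_{i} - v_{i}^{n-1} K'_{i}}{v_{i} - v_{i}^{-1}},
\end{equation*}
the induction step resting on~(R3) together with the specializations $K_{i} F_{i} = v_{i}^{-2} F_{i} K_{i}$ and $K'_{i} F_{i} = v_{i}^{2} F_{i} K'_{i}$ of~(R2) at $j = i$; note that the $t$-exponent $\langle i,i\rangle - \langle i,i\rangle$ vanishes there, so the whole $\mathfrak{sl}_{2}$-triple $(E_{i}, F_{i}, K_{i}, K'_{i})$ sees only the parameter $v_{i}$. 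Applying this identity to $v_{\lambda}$, where $E_{i}v_{\lambda} = 0$, $K_{i} v_{\lambda} = v^{i\cdot\lambda} c_{i,\lambda} v_{\lambda}$ and $K'_{i} v_{\lambda} = v^{-i\cdot\lambda} c_{i,\lambda} v_{\lambda}$, collapses $E_{i} w$ to a scalar multiple of $F_{i}^{n-1} v_{\lambda}$ whose coefficient is proportional to $v_{i}^{-(n-1)} v^{i\cdot\lambda} - v_{i}^{n-1} v^{-i\cdot\lambda}$, the common factor $c_{i,\lambda}$ being nonzero. Since $\Omega$ is of symmetric type we have $i\cdot i = 2$ and hence $v_{i} = v$, so this coefficient vanishes exactly when $n - 1 = \lambda\cdot i$, that is, for the prescribed value $n = \lambda\cdot i + 1$; dominance of $\lambda$ ensures $n \geq 1$ so that $F_{i}^{n}$ is meaningful. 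Thus $E_{i} w = 0$.

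Putting the three computations together shows that $w = F_{i}^{n} v_{\lambda}$ is a highest weight vector of weight $\lambda - ni$, whence the universal property delivers the required homomorphism $f$ with $f(v_{\lambda - ni}) = F_{i}^{n} v_{\lambda}$. I expect the only real obstacle to be the inductive commutator identity and its attendant bookkeeping: one must check that the two-parameter deformation enters $E_{i} w$ solely through the harmless overall scalar $c_{i,\lambda}$, so that the vanishing is governed by the single $v$-power condition $n = \lambda\cdot i + 1$. Once this is in place the remaining steps are formal consequences of the presentation~(R1)--(R4) and of the universal property of the Verma module.
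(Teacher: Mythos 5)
Your proposal is correct and follows essentially the same route as the paper: both reduce the claim via the universal property of the Verma module to showing $E_{j}F_{i}^{n}v_{\lambda}=0$, dispose of $j\neq i$ using (R3), and for $j=i$ apply the commutator identity for $[E_{i},F_{i}^{n}]$ (your closed form with $\frac{v_i^{n}-v_i^{-n}}{v_i-v_i^{-1}}$ is algebraically identical to the paper's summed expression $\sum_{k=0}^{n-1}v^{\mp(n-1-k)i\cdot i}$) and evaluate on $v_{\lambda}$ to extract the condition $n=\lambda\cdot i+1$. Your additional verification of the weight of $F_{i}^{n}v_{\lambda}$ and the explicit induction are just details the paper leaves implicit.
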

\begin{proof}
 From the universal property of $M_{v,t}(g)$, it is enough to show that $E_{j}F^{n}_{i}v_{\lambda}=0$ for all $j \in I$. If $j\neq i$, $E_{j}F^{n}_{i}v_{\lambda}=0$ is obvious. We only have to check the case when $j=i$. The communication relation of $E_{i}$ and $F^{n}_{i}$ is as follows
 $$E_{i}F^{n}_{i}-F^{n}_{i}E_{i}=\frac{\sum^{n-1}_{k=0}(F^{k}_{i}K_{i}F^{n-1-k}_{i}-F^{k}_{i}K'_{i}F^{n-1-k}_{i})}{v_{i}-v^{-1}_{i}}$$
 $$=\frac{\sum^{n-1}_{k=0}(v^{-(n-1-k)i\cdot i}F^{n-1}_{i}K_{i}-v^{(n-1-k)i \cdot i}F^{n-1}_{i}K'_{i})}{v_{i}-v^{-1}_{i}}.$$
Since $E_{i}v_{\lambda}=0$, we have
 $$E_{i}F^{n}_{i}v_{\lambda}=(v_{i}-v^{-1}_{i})^{-1}F^{n-1}_{i}(\frac{1-v^{-2n}_{i}}{1-v_{i}^{-2}}K_{i}-\frac{1-v^{2n}_{i}}{1-v_{i}^{2}}K'_{i})v_{\lambda}.$$
 So $E_{i}v_{\lambda}=0$ is equivalent to say
 $$(\frac{1-v^{-2n}_{i}}{1-v_{i}^{-2}}K_{i}-\frac{1-v^{2n}_{i}}{1-v_{i}^{2}}K'_{i})v_{\lambda}=0.$$
 From the above equation, we get $n=\lambda\cdot i+1$.
 \end{proof}

\section{The ad-invariant bilinear form }

 \begin{prop} \cite{FX}
 There exists a unique bilinear form $(\cdot,\cdot): U^{\leq}_{v,t}\times U^{\geq}_{v,t}\rightarrow \mathbb{Q}(v,t)$ such that for all $x, x' \in U^{\leq}_{v,t}$, $y,y' \in U^{\geq}_{v,t}$, $\mu,\nu \in \mathbb{Z}[I]$ and $i,j \in I$, we have
 \begin{equation*}
\begin{aligned}
(x,yy')&=(\Delta(x),y\otimes y'),\ (xx',y)=(x'\otimes x,\Delta(y)),\ (F_{i},E_{i})=\delta_{ij}(v_{i}^{-1}-v_{i})^{-1},\\
(K'_{\mu},K_{\nu})&=v^{\mu \cdot \nu }t^{\langle\nu,\mu\rangle-\langle\mu,\nu\rangle},\ (K'_{\mu},E_{i})=(F_{i},K_{\mu})=0,\\
\end{aligned}
\end{equation*}
where $K_{\mu}=\prod_{i\in I}K^{\mu_{i}}_{i}$, $K'_{\mu}\prod_{i\in I}K^{\mu'_{i}}_{i}$ for each $\mu=\Sigma_{i \in I}\mu_{i}\alpha_{i} \in Q$.
 \end{prop}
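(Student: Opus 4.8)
The plan is to prove uniqueness directly from the way the two Borel halves are generated, and then to obtain existence by transporting the bilinear form on $\mathfrak{f}$ of Proposition~\ref{prop:bilinearform} across the triangular decomposition, using the Drinfeld double structure of $U_{v,t}$ to certify consistency. For uniqueness, I would use that $U^{\geq}_{v,t}$ is generated as an algebra by $\{E_i : i\in I\}$ together with $\{K_\mu : \mu\in Q\}$, while $U^{\leq}_{v,t}$ is generated by $\{F_i : i\in I\}$ and $\{K'_\mu : \mu\in Q\}$. Writing an arbitrary element of $U^{\geq}_{v,t}$ as a linear combination of products of these generators and applying the axiom $(x,yy')=(\Delta(x),y\otimes y')$ repeatedly in the second slot reduces $(x,y)$ to a sum of pairings against single generators; applying $(xx',y)=(x'\otimes x,\Delta(y))$ in the first slot then reduces everything to the prescribed pairings of generators against generators. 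Hence any form satisfying the stated conditions is forced, giving uniqueness.

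For existence I would assemble the form piecewise. On the unipotent parts I transport the symmetric form of Proposition~\ref{prop:bilinearform}: under the algebra identifications $U^+_{v,t}\cong\mathfrak{f}$, $E_i\mapsto\theta_i$, and $U^-_{v,t}\cong\mathfrak{f}$, $F_i\mapsto\theta_i$, I set $(F_x,E_y)$ to be a fixed rescaling of $(x,y)$ chosen so that $(F_i,E_j)=\delta_{ij}(v_i^{-1}-v_i)^{-1}$ matches part (b) of that proposition (the two normalizations differ only by a scalar depending on $v_i$). On the toral part I set $(K'_\mu,K_\nu)=v^{\mu\cdot\nu}t^{\langle\nu,\mu\rangle-\langle\mu,\nu\rangle}$ and declare the mixed pairings $(K'_\mu,E_i)$ and $(F_i,K_\mu)$ to vanish, then extend to all of $U^{\leq}_{v,t}\times U^{\geq}_{v,t}$ by forcing the two comultiplication axioms. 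This yields a closed recipe expressing the pairing of two monomials as a product of toral scalars and factors coming from the $\mathfrak{f}$-form.

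The hard part will be well-definedness: the recipe must respect every defining relation (R1)--(R4) on both factors, and the two comultiplication axioms must assign the same value to every pair. The Serre relations (R4) come for free, since they are exactly the generators of the radical $\mathfrak{J}$ of the form on $'\mathfrak{f}$; thus the $\mathfrak{f}$-form already descends to $\mathfrak{f}={}'\mathfrak{f}/\mathfrak{J}$ and the pairing annihilates $\mathfrak{J}$ in either slot. The relations (R1)--(R3) reduce to checking that the group-like action recorded by $v^{\mu\cdot\nu}t^{\langle\nu,\mu\rangle-\langle\mu,\nu\rangle}$ is compatible with the $v$- and $t$-twisted multiplication on $\mathfrak{f}\otimes\mathfrak{f}$, and this compatibility is precisely what is needed for the double-slot computation to be coassociative. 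The cleanest way to discharge all of this is to observe that the data above is exactly the skew-Hopf pairing between the two Borel subalgebras whose Drinfeld double is $U_{v,t}$; the existence of that double guarantees a consistent pairing, and it then remains only to match its values on the generators $E_i,F_i,K_\mu,K'_\mu$ against the formulas in the statement.
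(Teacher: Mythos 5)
The paper itself offers no proof of this proposition: it is imported wholesale from \cite{FX}, so the only meaningful comparison is with the standard construction of the skew-Hopf pairing that \cite{FX} carries out, which is indeed the direct construction you begin to sketch. Your uniqueness argument is the standard one and is fine. The genuine gap is in the existence half, at the point where you ``discharge'' well-definedness by observing that the data is ``exactly the skew-Hopf pairing between the two Borel subalgebras whose Drinfeld double is $U_{v,t}$'' and that ``the existence of that double guarantees a consistent pairing.'' In the logical order the paper sets up this is circular: the presentation (R1)--(R4) of $U_{v,t}$ is \emph{obtained} by the Drinfeld double construction, which takes the skew-Hopf pairing of the two Borel halves as its input. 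You cannot invoke the double to certify the pairing the double is built from. The consistency checks you are trying to skip --- that the recursive extension by $(x,yy')=(\Delta(x),y\otimes y')$ and $(xx',y)=(x'\otimes x,\Delta(y))$ is well defined on the presented algebras $U^{\leq}_{v,t}$ and $U^{\geq}_{v,t}$, and that the two axioms never assign conflicting values to the same pair --- are the entire content of the proof.

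Two smaller but substantive points. First, your checklist of relations is off: the pairing lives on $U^{\leq}_{v,t}\times U^{\geq}_{v,t}$, so the only relations it must respect are those internal to each Borel half, namely (R1), the $K$--$E$ and $K'$--$F$ parts of (R2), and the Serre relations (R4); the cross relation (R3) is not a relation of either factor but an \emph{output} of the double construction, so it is not something the pairing needs to be checked against. Second, the transport of the form of Proposition~\ref{prop:bilinearform} is not ``a fixed rescaling by a scalar'': matching $(\theta_i,\theta_j)=\delta_{ij}(1-v_i^{-2})^{-1}$ to $(F_i,E_j)=\delta_{ij}(v_i^{-1}-v_i)^{-1}$ forces a rescaling by $-v_i^{-1}$ in each degree-$i$ direction, hence by a degree-dependent factor on each $\mathfrak{f}_\nu$, and one must further reconcile the twisted tensor-product form $(x_1\otimes x_2,y_1\otimes y_2)=t^{2[|x_1|,|x_2|]}(x_1,y_1)(x_2,y_2)$ governing the recursion for the form on $\mathfrak{f}$ with the Hopf comultiplication $\Delta(E_i)=E_i\otimes 1+K_i\otimes E_i$ governing the recursion for the pairing. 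Showing that the $K$-factors appearing in $\Delta$ produce exactly the toral scalars $v^{\mu\cdot\nu}t^{\langle\nu,\mu\rangle-\langle\mu,\nu\rangle}$ needed to reproduce that twist is where the real work lies, and your sketch does not do it.
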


The above bilinear form is the skew-Hopf pair of $U_{v,t}(g)$. Since we will frequently use the commutation relation between the positive part and negative part, we need the following lemma.
\begin{lem}   For each $i \in I$, there exist two linear maps $p_{i}$ and $p'_{i}$ on $U^{+}_{v,t}$, such that for any $x \in U^{+}_{v,t}$
$$xF_{i}-F_{i}x=\frac{p_{i}(x)K_{i}-K'_{i}p'_{i}(x)}{v_{i}-v^{-1}_{i}}$$
and
 \begin{equation*}
\begin{aligned}
\Delta(x)&=x\otimes1+\sum_{i\in I}p_{i}(x)K_{i}\otimes E_{i}+M_{1}\\
 &=K_{\mu}\otimes x+\sum_{i\in I}E_{i}K_{\mu-i}\otimes p'_{i}(x)+M_{2},\\
\end{aligned}
\end{equation*}
where $M_{1}$ and $M_{2}$ denote the sums of the other terms.
\end{lem}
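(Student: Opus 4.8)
The plan is to establish both identities simultaneously by induction on $\mathrm{tr}(|x|)$ for homogeneous $x\in U^+_{v,t}$, the general case following by linearity. The maps $p_i,p'_i$ will be nothing but the twisted derivations $r_i$ and ${}_ir$ of Section~\ref{sec:freealgebra} transported to $U^+_{v,t}$ under the identification $U^+_{v,t}\cong\mathfrak{f}$; I would first record that they are forced to be unique, which is what makes the inductive construction legitimate. Indeed, for homogeneous $x$ of degree $\nu$ the element $xF_i-F_ix$ lies in $U^+_{v,t}K_i\oplus U^+_{v,t}K'_i$ with both coefficients of degree $\nu-i$, and since $K_i$ and $K'_i$ are distinct monomials in the triangular decomposition $U_{v,t}\cong U^-_{v,t}\otimes U^0_{v,t}\otimes U^+_{v,t}$, the coefficients $p_i(x),p'_i(x)$ are uniquely determined. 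This uniqueness is exactly what guarantees that the values produced below do not depend on how $x$ is factored, and hence that $p_i,p'_i$ descend through the quantum Serre ideal $\mathfrak{J}$ to well-defined maps on $U^+_{v,t}$.

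For the base case $x=E_j$, relation (R3) gives $E_jF_i-F_iE_j=\delta_{ij}\frac{K_i-K'_i}{v_i-v^{-1}_i}$, so we set $p_i(E_j)=p'_i(E_j)=\delta_{ij}$, while $\Delta(E_j)=E_j\otimes1+K_j\otimes E_j$ matches both expansions with $M_1=M_2=0$. For the inductive step of the commutator formula, I write a homogeneous $x$ as a product $yz$ of lower height and use \[ (yz)F_i-F_i(yz)=(yF_i-F_iy)z+y(zF_i-F_iz). \] Substituting the inductive hypothesis and then moving $K_i,K'_i$ past the $U^+$-factors by (R2) produces exactly the scalars appearing in the definitions of $r_i,{}_ir$, yielding \[ p_i(yz)=v^{i\cdot|z|}t^{\langle|z|,i\rangle-\langle i,|z|\rangle}p_i(y)\,z+y\,p_i(z),\qquad p'_i(yz)=p'_i(y)\,z+v^{i\cdot|y|}t^{\langle i,|y|\rangle-\langle|y|,i\rangle}y\,p'_i(z). \] By the uniqueness above these are consistent, completing the commutator identity.

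For the coproduct identity I would expand $\Delta(yz)=\Delta(y)\Delta(z)$ using the componentwise multiplication on $U_{v,t}\otimes U_{v,t}$ and collect, in the first expansion, the terms whose second tensor factor has degree $0$ and degree $i$. The degree-$0$ part gives $yz\otimes1$; the degree-$i$ part receives one contribution $y\,p_i(z)K_i\otimes E_i$ from $(y\otimes1)\bigl(p_i(z)K_i\otimes E_i\bigr)$ and one contribution $p_i(y)K_iz\otimes E_i$ from $\bigl(p_i(y)K_i\otimes E_i\bigr)(z\otimes1)$, the latter picking up the scalar $v^{i\cdot|z|}t^{\langle|z|,i\rangle-\langle i,|z|\rangle}$ on commuting $K_i$ past $z$. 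Their sum is precisely $p_i(yz)K_i\otimes E_i$, and every remaining term has second tensor factor of degree $\ge2$, hence is absorbed into $M_1$. The second expansion, with $\mu=|x|$ and the role of $K_i$ played by $E_iK_{\mu-i}$, is handled symmetrically and reproduces $p'_i(yz)$.

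The step I expect to be the main obstacle is the precise scalar bookkeeping: one must check that the $v$- and $t$-powers generated by the commutation relations (R2), when $K_i$ and $K'_i$ are pushed past homogeneous elements of $U^+_{v,t}$, coincide on the nose with the twists built into $r_i$ and ${}_ir$, and that the two displayed recursions are mutually compatible. Once this matching is verified, the uniqueness argument takes care of well-definedness on the quotient $\mathfrak{f}={}'\!\mathfrak{f}/\mathfrak{J}$, so no separate verification that $p_i,p'_i$ annihilate the quantum Serre relations is needed.
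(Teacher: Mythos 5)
Your proposal is correct and follows essentially the same route as the paper: both arguments rest on showing that the coefficients of $K_i$ and $K'_i$ in $xF_i-F_ix$ and the degree-$i$ components of $\Delta(x)$ obey the identical twisted-derivation recursions (the ones defining $r_i$ and ${}_ir$), agree on the generators $E_j$, and hence coincide. The only differences are organizational — you make explicit the uniqueness of the coefficients via the triangular decomposition and the existence of $b_i,b'_i$ by induction, points the paper asserts without comment — so this is a slightly more careful write-up of the same proof.
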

\begin{proof}

Consider the definition of co-product, we can get
$$ \Delta(U^{+}_{\xi})=\bigoplus_{0\leq\nu\leq\xi}U^{+}_{\xi-\nu}K_{\nu}\otimes U^{+}_{\nu},  $$
where $\leq$ is the usual partial order on $Q^{+}$ : $\nu\leq\mu $ if $\nu-\mu \in Q^{+}$. Thus, for each $i \in I$, we can define two linear maps $p_{i}$ and $p'_{i}$ on $U^{+}_{v,t}$, satisfying the following equation

\begin{equation*}
\begin{aligned}
\Delta(x)&=x\otimes1+\sum_{i\in I}p_{i}(x)K_{i}\otimes E_{i}+M_{1}\\
 &=K_{\mu}\otimes x+\sum_{i\in I}E_{i}K_{\mu-i}\otimes p'_{i}(x)+M_{2},\\
\end{aligned}
\end{equation*}
where $M_{1}$ and $M_{2}$ denote the sums of the other terms.

On the other hand, for $x \in U^{+}_{v,t}$ and $i \in I$, we have
$$xF_{i}-F_{i}x=\frac{b_{i}(x)K_{i}-K'_{i}b'_{i}(x)}{v_{i}-v^{-1}_{i}},$$
where $b_{i}$ and $b'_{i}$ are linear maps on $U^{+}_{v,t}$.

We have to show the map $b_{i}$  is equal to  $p_{i}$ and  the map $b'_{i}$ is equal to $p'_{i}$.

Assume $x_{1}$ and $x_{2}$ are two homogeneous elements in $U^{+}_{v,t}$, we have
$$x_{1}x_{2}F_{i}-F_{i}x_{1}x_{2}=$$
$$\frac{(x_{1}b_{i}(x_{2})+v^{i\cdot|x_{2}|}t^{\langle i,|x_{2}|\rangle-\langle|x_{2}|,i \rangle}b_{i}(x_{1})x_{2})K_{i}-K'_{i}(v^{i\cdot|x_{1}|}t^{\langle|x_{1}|,i\rangle-\langle i,|x_{1}|\rangle}x_{1}b'_{i}(x_{2})+b'_{i}(x_{1})x_{2})}{v_{i}-v^{-1}_{i}}.$$

From the above equation, we get the following relations:
$$b_{i}(x_{1}x_{2})=x_{1}b_{i}(x_{2})+v^{i\cdot|x_{2}|}t^{\langle i,|x_{2}|\rangle-\langle|x_{2}|,i \rangle}b_{i}(x_{1})x_{2},$$
$$b'_{i}(x_{1}x_{2})=v^{i\cdot|x_{1}|}t^{\langle|x_{1}|,i\rangle-\langle i,|x_{1}|\rangle}x_{1}b'_{i}(x_{2})+b'_{i}(x_{1})x_{2}.$$

On the other hand,
$$\Delta(x_{1}x_{2})=(( v^{i\cdot|x_{2}|}t^{\langle i,|x_{2}|\rangle-\langle|x_{2}|,i \rangle}p_{i}(x_{1})x_{2}+x_{1}p_{i}(x_{2}))   K_{i} \otimes E_{i})\oplus M,$$
where $M$ denotes the sum of the other terms.

So we have the following relation:
$$p_{i}(x_{1}x_{2})= x_{1}p_{i}(x_{2})+v^{i\cdot|x_{2}|}t^{\langle i,|x_{2}|\rangle-\langle|x_{2}|,i \rangle}p_{i}(x_{1})x_{2}. $$

Comparing $b_{i}(x_{1}x_{2})$ with $p_{i}(x_{1}x_{2})$, we conclude $b_{i}(x)=p_{i}(x)$ for all $x \in U^{+}_{v,t}$.

Similarly,
$$\Delta(x_{1}x_{2})=( E_{i}K_{|x_{1}|+|x_{2}|-i}  \otimes v^{i\cdot|x_{1}|}t^{\langle|x_{1}|,i\rangle-\langle i,|x_{1}|\rangle}x_{1}p_{i}'(x_{2})      +         p'_{i}(x_{1})x_{2}    ) \oplus M',$$
where $M'$ denotes the sum of the other terms.

So we have the following relation:
$$p'_{i}(x_{1}x_{2})=v^{i\cdot|x_{1}|}t^{\langle|x_{1}|,i\rangle-\langle i,|x_{1}|\rangle}x_{1}p_{i}'(x_{2})      +         p'_{i}(x_{1})x_{2}.$$

Comparing $b'_{i}(x_{1}x_{2})$ with $p'_{i}(x_{1}x_{2})$, we conclude $b'_{i}(x)=p'_{i}(x)$ for all $x \in U^{+}_{v,t}$.

After the above discussion, we have the map $b_{i}$  is equal to  $p_{i}$ and  the map $b'_{i}$ is equal to $p'_{i}$.
\end{proof}

Parallel to the positive part, there are also two linear maps defined on the negative part.    For any $i \in I$, by analogue with $U^{+}_{v,t}$, denote by $a_{i}$ and $a'_{i}$  the linear maps on $U^{-}_{v,t}$, such that

$$x^{-}E_{i}-E_{i}x^{-}=\frac{a_{i}(x)K'_{i}-K_{i}a'_{i}(x)}{v_{i}-v^{-1}_{i}}$$
for any $x^{-} \in U^{-}_{v,t}$.

Besides, the maps $a_{i}$ and $a'_{i}$ also satisfy
\begin{equation*}
\begin{aligned}
\Delta(x^{-})&=1\otimes x^{-}+\sum_{i\in I}F_{i}\otimes a_{i}(x^{-})K'_{i} +M'_{1}\\
 &=x^{-}\otimes K'_{\mu}+\sum_{i\in I}a'_{i}(x^{-})\otimes F_{i}K'_{\mu-i}+M'_{2}.\\
\end{aligned}
\end{equation*}
where $M'_{1}$ and $M'_{2}$ denote the sums of the other terms.

It is well-known that $U_{v,t}$ can act on itself by adjoint representation. Recall the adjoint representation as  follows,

$$ ad(u)m=\sum_{(u)}u_{(1)}mS(u_{(2)})$$
where $u,m \in U_{v,t}$ and $\Delta(u)=\Sigma_{(u)}u_{(1)} \bigotimes u_{(2)}$.

It is evident from the triangular decomposition that $U_{v,t}$ has the decomposition
$$\bigoplus_{\mu,\upsilon\in Q^{+}}(U_{-\nu}^{-}K'_{-\nu})\otimes(U^{0})\otimes(U_{\mu}^{+})\rightarrow U_{v,t}.$$

Recall the skew-Hopf pair $(,)$ of $U_{v,t}$ and the above decomposition allows us to define the following bilinear form.

\begin{dfn} Set $\rho$ the half sum of positive roots. Define  $$\langle yK'_{-\nu}K_{\eta}'K_{\phi}x,y_{1}K'_{-\nu_{1}}K_{\eta_{1}}'K_{\phi_{1}}x_{1} \rangle=(y ,x_{1})(y_{1} ,x)(K_{\eta}' ,K_{\phi_{1}})( K_{\eta_{1}}',K_{\phi})v^{2\rho\cdot\nu}$$
for all $x \in U_{\mu}^{+}$, $x_{1} \in U_{\mu_{1}}^{+}$, $y \in U_{\nu}^{-}$, $y_{1} \in U_{\nu_{1}}^{-}$, $\nu,\nu_{1},\mu,\mu_{1} \in Q^{+}$ and all $\eta,\eta_{1},\phi,\phi_{1}\in Q$. Extend it to the whole algebra $U_{v,t}$, we get the bilinear form $\langle\cdot|\cdot\rangle$: $U_{v,t}\times U_{v,t}\rightarrow \mathbb{Q}(v,t).$
\end{dfn}

Form the above definition and the non-degeneracy of skew-Hopf pair, we can conclude the following lemma.

\begin{lem} Let $\mu,\mu_{1},\nu,\nu_{1} \in Q^{+}$. Then
$$\langle U^{-}_{-\nu}U^{0}U^{+}_{\mu} |     U^{-}_{-\nu_{1}}U^{0}U^{+}_{\mu_{1}}    \rangle=0$$
unless $\nu=\mu_{1}$ and $\mu=\nu_{1}$.
\end{lem}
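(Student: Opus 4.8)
The plan is to reduce the statement to the behaviour of the skew-Hopf pairing $(\cdot,\cdot)$ with respect to the $Q$-grading, and then read off the vanishing from the defining formula of $\langle\cdot\mid\cdot\rangle$. By bilinearity it suffices to evaluate the form on typical homogeneous elements $yK'_{-\nu}K'_{\eta}K_{\phi}x$ and $y_{1}K'_{-\nu_{1}}K'_{\eta_{1}}K_{\phi_{1}}x_{1}$ with $x\in U^{+}_{\mu}$, $x_{1}\in U^{+}_{\mu_{1}}$, $y\in U^{-}_{-\nu}$, $y_{1}\in U^{-}_{-\nu_{1}}$, and then invoke the Definition, which gives
$$\langle yK'_{-\nu}K'_{\eta}K_{\phi}x \mid y_{1}K'_{-\nu_{1}}K'_{\eta_{1}}K_{\phi_{1}}x_{1}\rangle =(y,x_{1})(y_{1},x)(K'_{\eta},K_{\phi_{1}})(K'_{\eta_{1}},K_{\phi})v^{2\rho\cdot\nu}.$$

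The key input is the orthogonality of the skew-Hopf pairing across distinct degrees: for $y\in U^{-}_{-\nu}$ and $x\in U^{+}_{\mu}$ one has $(y,x)=0$ unless $\nu=\mu$. I would establish this by induction on ${\rm tr}(\nu)$, using the Hopf compatibility $(x,yy')=(\Delta(x),y\otimes y')$ together with the graded form of the coproduct $\Delta(U^{+}_{\xi})=\bigoplus_{0\leq\nu\leq\xi}U^{+}_{\xi-\nu}K_{\nu}\otimes U^{+}_{\nu}$ and its negative counterpart recalled earlier. The base case is $(F_{i},E_{j})=\delta_{ij}(v_{i}^{-1}-v_{i})^{-1}$, which already pairs nontrivially only in matching degree, while the mixed relations $(K'_{\mu},E_{i})=(F_{i},K_{\mu})=0$ guarantee that the Cartan generators contribute no cross terms and that the pure $U^{-}$--$U^{+}$ restriction factors off cleanly. (If one prefers, this graded orthogonality may simply be cited from \cite{FX}.)

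With this in hand, the explicit formula above does the rest. The factor $(y,x_{1})$ pairs an element of $U^{-}_{-\nu}$ against one of $U^{+}_{\mu_{1}}$, hence vanishes unless $\nu=\mu_{1}$; the factor $(y_{1},x)$ pairs an element of $U^{-}_{-\nu_{1}}$ against one of $U^{+}_{\mu}$, hence vanishes unless $\nu_{1}=\mu$. The remaining quantities $(K'_{\eta},K_{\phi_{1}})$, $(K'_{\eta_{1}},K_{\phi})$ and $v^{2\rho\cdot\nu}$ are nonzero scalars (products of monomials in $v$ and $t$) and therefore cannot compensate a vanishing factor. Consequently the whole product is zero unless simultaneously $\nu=\mu_{1}$ and $\mu=\nu_{1}$, and by bilinearity the same holds on the full graded subspaces $U^{-}_{-\nu}U^{0}U^{+}_{\mu}$ and $U^{-}_{-\nu_{1}}U^{0}U^{+}_{\mu_{1}}$.

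The main obstacle is precisely the grading orthogonality of the skew-Hopf pairing; once it is proved or cited, the lemma is a direct inspection of the defining formula. The only point demanding care is bookkeeping: one must confirm that the pairing $(\cdot,\cdot)\colon U^{\leq}_{v,t}\times U^{\geq}_{v,t}\to\mathbb{Q}(v,t)$ restricts to the pure $U^{-}$--$U^{+}$ pairing used here, so that the Cartan part genuinely factors off as written. This is ensured by the triangular decomposition of $U_{v,t}$ and by the vanishing $(K'_{\mu},E_{i})=(F_{i},K_{\mu})=0$, so no additional effort beyond the grading argument is required.
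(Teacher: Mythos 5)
Your argument is correct and matches the paper's (unwritten) reasoning: the paper simply asserts the lemma follows from the defining formula of $\langle\cdot\mid\cdot\rangle$, and your unpacking — that the factors $(y,x_{1})$ and $(y_{1},x)$ vanish across mismatched degrees by the graded orthogonality of the skew-Hopf pairing, while the Cartan factors and $v^{2\rho\cdot\nu}$ are nonzero monomials — is exactly the intended justification. No gap.
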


\begin{prop} The bilinear form is ad-invariant on $U_{v,t}$. That is, we have
$$\langle ad(u)u_{1} | u_{2}\rangle=\langle u_{1}| ad(S(u))u_{2}\rangle$$
for all $u,u_{1},u_{2} \in U_{v,t}$.
\end{prop}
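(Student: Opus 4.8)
The plan is to exploit the fact that both sides of the asserted identity are multiplicative in the variable $u$, so that the verification reduces to the algebra generators. Let $\mathcal{S}$ denote the set of $u\in U_{v,t}$ satisfying $\langle ad(u)u_1|u_2\rangle=\langle u_1|ad(S(u))u_2\rangle$ for all $u_1,u_2$. Since the adjoint action of a Hopf algebra makes $ad\colon U_{v,t}\to \End(U_{v,t})$ an algebra homomorphism and $S$ is an algebra anti-homomorphism, for $a,b\in\mathcal{S}$ one has
$$\langle ad(ab)u_1|u_2\rangle=\langle ad(a)ad(b)u_1|u_2\rangle=\langle ad(b)u_1|ad(S(a))u_2\rangle=\langle u_1|ad(S(b))ad(S(a))u_2\rangle=\langle u_1|ad(S(ab))u_2\rangle.$$
Hence $\mathcal{S}$ is a subalgebra, and as it obviously contains $1$, it suffices to prove that it contains the generators $E_i,F_i,K_i^{\pm 1},K_i'^{\pm 1}$.

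For the group-like generators the adjoint action is conjugation: $ad(K_i)m=K_imK_i^{-1}$ with $S(K_i)=K_i^{-1}$, so the claim becomes $\langle K_iu_1K_i^{-1}|u_2\rangle=\langle u_1|K_i^{-1}u_2K_i\rangle$. By bilinearity I would expand $u_1,u_2$ in triangular homogeneous components $yK'_{-\nu}K'_\eta K_\phi x$ and invoke the vanishing Lemma, which guarantees that $\langle U^-_{-\nu}U^0U^+_\mu\,|\,U^-_{-\nu_1}U^0U^+_{\mu_1}\rangle=0$ unless $\nu=\mu_1$ and $\mu=\nu_1$. On the surviving pairs, conjugation by $K_i$ multiplies each factor by an explicit scalar governed by (R2), and because the pairing forces the weights on the two sides to be opposite, the scalars cancel. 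The cases $K_i',K_i^{-1},K_i'^{-1}$ are handled identically.

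The essential cases are $u=E_i$ and $u=F_i$. Using $\Delta(E_i)=E_i\otimes 1+K_i\otimes E_i$ and $S(E_i)=-K_i^{-1}E_i$ gives $ad(E_i)m=E_im-K_imK_i^{-1}E_i$, while multiplicativity of $ad$ yields $ad(S(E_i))=-ad(K_i^{-1})ad(E_i)$. The target identity then becomes the statement that left multiplication by $E_i$ and right multiplication by $E_i$ are adjoint with respect to $\langle\cdot|\cdot\rangle$, up to the conjugation corrections just recorded. To verify this I would expand $u_1$ as $yK'_{-\nu}K'_\eta K_\phi x$, commute $E_i$ through the factors by means of the structure maps $p_i,p_i'$ (and, on the negative leg, $a_i,a_i'$) supplied by the Lemma above, and then evaluate using the defining axioms of the skew-Hopf pairing, chiefly $(x,yy')=(\Delta(x),y\otimes y')$ and $(F_i,E_j)=\delta_{ij}(v_i^{-1}-v_i)^{-1}$. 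The statement for $F_i$ follows by the symmetric computation with $a_i,a_i'$ in place of $p_i,p_i'$.

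The main obstacle is precisely this last computation: the adjoint action does not respect the triangular components separately, since the term $K_imK_i^{-1}E_i$ simultaneously conjugates by $K_i$ and multiplies on the right by $E_i$, so I must track carefully how the negative, Cartan, and positive legs recombine, discarding cross terms via the vanishing Lemma. The two-parameter setting makes the bookkeeping heavier than the one-parameter case because every commutation produces both a $v$-power and a $t$-power, namely the factors $v^{i\cdot|x|}t^{\langle i,|x|\rangle-\langle|x|,i\rangle}$ appearing in $p_i,p_i'$. The delicate point, which I would carry out in full, is to check that the $v$- and $t$-twists generated on the $ad(u)u_1$ side match exactly those generated by $ad(S(u))$ acting on $u_2$, so that they cancel and the equality of the two scalars is attained.
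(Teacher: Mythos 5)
Your plan follows essentially the same route as the paper's proof: reduce to the generators $E_i,F_i,K_i^{\pm1},K_i'^{\pm1}$ (the paper asserts this reduction without the subalgebra argument you supply, which is a worthwhile explicit justification), dispose of the Cartan generators by the weight-matching consequence of the vanishing lemma, and handle $E_i,F_i$ by commuting them through the triangular factors via the maps $p_i,p_i'$ (resp.\ $a_i,a_i'$) and evaluating with the skew-Hopf pairing axioms. The remaining work you defer --- matching the $v$- and $t$-twists on the two sides --- is exactly the computation the paper carries out, so the proposal is sound and essentially identical in method.
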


\begin{proof}  We only have to prove that the bilinear form is ad-invariant when $u$ is one of the generators  $E_{i}$,$F_{i}$,$K_{i}$,$K'_{i}$ for all $i \in I$. And for convenience, assume $u_{1}$ and $u_{2}$ have the following form,

\begin{equation*}
\begin{aligned}
&u_{1}=yK_{\nu}^{-1}K_{\eta}'K_{\phi}x\\
 &u_{2}=y_{1}K_{\nu_{1}}^{-1}K'_{\eta_{1}}K_{\phi_{1}}x_{1}.
\end{aligned}
\end{equation*}
where $|y|=-\nu$, $|y_{1}|=-\nu_{1}$, $|x|=\mu$ and $|x_{1}|=\mu_{1}$.

When $u=K_{i}$, we have

$$ad(K_{i})u_{1}=K_{i}u_{1}K^{-1}_{i}=v^{i \cdot (\mu-\nu)}t^{\langle \nu-\mu ,i \rangle-\langle i,\nu-\mu \rangle}u_{1}.$$

So we get
$$\langle ad(K_{i})u_{1}|u_{2} \rangle=   v^{i \cdot (\mu-\nu)}t^{\langle \nu-\mu ,i \rangle-\langle i,\nu-\mu \rangle}\langle u_{1}|u_{2}\rangle.$$

Besides, we have
$$ ad(S(K_{i}))u_{2}=K^{-1}_{i}u_{2}K_{i}=v^{i \cdot (-\mu_{1}+\nu_{1})}t^{\langle -\nu_{1}+\mu_{1} ,i \rangle-\langle i,-\nu_{1}+\mu_{1} \rangle}u_{2},$$
so we get
$$\langle u_{1}|ad(S(K_{i}))u_{2}\rangle=v^{i \cdot (-\mu_{1}+\nu_{1})}t^{\langle -\nu_{1}+\mu_{1} ,i \rangle-\langle i,-\nu_{1}+\mu_{1} \rangle}\langle u_{1}|u_{2}\rangle.$$

If $\langle u_{1}|u_{2}\rangle=0$, it is obvious to get the conclusion.

If $\langle u_{1}|u_{2}\rangle \neq 0$, we have $\nu-\mu=\mu_{1}-\nu_{1}$ from the \textrm{lemma 3.4}. So we can conclude $\langle ad(K_{i})u_{1}|u_{2}\rangle=\langle u_{1}|ad(S(K_{i}))u_{2}\rangle$.

When $u=F_{i}$, we have
$$ad(F_{i})u_{1}=-u_{1}F_{i}K'^{-1}_{i}+F_{i}u_{1}K'^{-1}_{i}.$$

From \textrm{Lemma 3.2}, we get

$$xF_{i}-F_{i}x=\frac{p_{i}(x)K_{i}-K'_{i}p'_{i}(x)}{v_{i}-v^{-1}_{i}}$$
for any $x \in U^{+}_{v,t}$. Then

\begin{equation*}
\begin{aligned}
ad(F_{i})u_{1}&=u_{1}S(F_{i})+F_{i}u_{1}S(K'_{i})=-u_{1}F_{i}K'^{-1}_{i}+F_{i}u_{1}K'^{-1}_{i}\\
 &=-yK'^{-1}_{\nu}K'_{\eta}K_{\phi}xF_{i}K'^{-1}_{i}+F_{i}yK'^{-1}_{\nu}K'_{\eta}K_{\phi}xK'^{-1}_{i}\\
 &=-yK'^{-1}_{\nu}K'_{\eta}K_{\phi}(F_{i}x+(v_{i}-v^{-1}_{i})^{-1}(p_{i}(x)K_{i}-K'_{i}p'_{i}(x)))K'^{-1}_{i}+F_{i}yK'^{-1}_{\nu}K'_{\eta}K_{\phi}xK'^{-1}_{i}\\
 &=-yK'^{-1}_{\nu}K'_{\eta}K_{\phi}F_{i}xK'^{-1}_{i}+(v_{i}-v^{-1}_{i})^{-1}(yK'^{-1}_{\nu}K'_{\eta}K_{\phi}K'_{i}p'_{i}(x)K'^{-1}_{i})\\
 &  \ \ \ -(v_{i}-v^{-1}_{i})^{-1}(yK'^{-1}_{\nu}K'_{\eta}K_{\phi}p_{i}(x)K_{i}K'^{-1}_{i})+F_{i}yK'^{-1}_{\nu}K'_{\eta}K_{\phi}xK'^{-1}_{i}\\
 &=-v^{(-i)\cdot(\mu+\phi+\nu-\eta)}t^{\langle i,\mu+\phi+\eta-\nu\rangle-\langle\mu+\phi+\eta-\nu,i\rangle}yF_{i}K'^{-1}_{\nu+i}K'_{\eta}K_{\phi}x\\
   &\ \ \  +(v_{i}-v^{-1}_{i})^{-1}v^{(-i)\cdot(\mu-i)}t^{\langle i,\mu-i\rangle-\langle\mu-i,i\rangle}yK'^{-1}_{\nu}K'_{\eta}K_{\phi}p'_{i}(x)\\
   &\ \ \ -(v_{i}-v^{-1}_{i})^{-1}v^{(-2i)\cdot(\mu-i)}yK'^{-1}_{\nu}K'_{\eta-i}K_{\phi+i}p_{i}(x)\\
   &\ \ \ +v^{-i\cdot\mu}t^{\langle i,\mu\rangle-\langle\mu,i\rangle}F_{i}yK'^{-1}_{\nu+i}K'_{\eta}K_{\phi}x
\end{aligned}
\end{equation*}

Now
$$ad(S(F_{i}))u_{2}=ad(-F_{i}K'^{-1}_{i})u_{2}=-v^{i \cdot (\mu_{1}-\nu_{1})}t^{\langle i,\nu_{1}-\mu_{1}\rangle-\langle \nu_{1}-\mu_{1},i\rangle}ad(F_{i})u_{2}.$$

Similarly, repeat the previous calculation with $u_{1}$ replaced by $u_{2}$, we have

\begin{equation*}
\begin{aligned}
ad(S(F_{i}))u_{2}=&v^{-i\cdot(\phi_{1}+2\nu_{1}-\eta_{1})}t^{\langle i,\phi_{1}+\eta_{1}\rangle-\langle\phi_{1}+\eta_{1},i\rangle}y_{1}F_{i}K'^{-1}_{\nu_{1}+i}K'_{\eta_{1}}K_{\phi_{1}}x_{1}\\
&-(v_{i}-v^{-1}_{i})^{-1}v^{-i\cdot(\nu_{1}-i)}t^{\langle i,\nu_{1}-i\rangle-\langle\nu_{1}-i,i\rangle}y_{1}K'^{-1}_{\nu_{1}}K'_{\eta_{1}}K_{\phi_{1}}p'_{i}(x_{1})\\
&+(v_{i}-v^{-1}_{i})^{-1}v^{i\cdot(2i-\mu_{1}-\nu_{1})}t^{\langle i,\nu_{1}-\mu_{1}\rangle-\langle \nu_{1}-\mu_{1},i\rangle}y_{1}K'^{-1}_{\nu_{1}}K'_{\eta_{1}-i}K_{\phi_{1}+i}p_{i}(x_{1})\\
&-v^{-i\cdot\nu_{1}}t^{\langle i,\nu_{1}\rangle-\langle\nu_{1},i\rangle}F_{i}y_{1}K'^{-1}_{\nu_{1}+i}K'_{\eta_{1}}K_{\phi_{1}}x_{1}
\end{aligned}
\end{equation*}

Note that $\langle U^{-}_{-\nu}U^{0}U^{+}_{\mu}| U^{-}_{-\nu_{1}}U^{0}U^{+}_{\mu_{1}}\rangle$=0 unless $\mu_{1}=\nu$ and $\mu=\nu_{1}$. Then $\langle ad (F_{i})u_{1}|u_{2}\rangle$ and $\langle u_{1}|ad(S(F_{i}))u_{2}\rangle$ are non-zero only when  either (a) $\nu+i=\mu_{1}$ and $\mu=\nu_{1}$ , or  (b) $\nu=\mu_{1}$ and $\nu_{1}=\mu-i$. Here, we only prove $(a)$ since $(b)$ can be proved in a similar way.

When $\nu+i=\mu_{1}$ and $\mu=\nu_{1}$. From the definition of $\langle \cdot | \cdot \rangle$,  we have

\begin{equation*}
\begin{aligned}
\langle ad(F_{i})u_{1}|u_{2}\rangle=\langle& -v^{(-i)\cdot(\mu+\phi+\nu-\eta)}t^{\langle i,\mu+\phi+\eta-\nu\rangle-\langle\mu+\phi+\eta-\nu,i\rangle}yF_{i}K'^{-1}_{\nu+i}K'_{\eta}K_{\phi}x|y_{1}K'^{-1}_{\nu_{1}}K'_{\eta_{1}}K_{\phi_{1}}x_{1}\rangle\\
&+\langle v^{-i\cdot\mu}t^{\langle i,\mu\rangle-\langle\mu,i\rangle}F_{i}yK'^{-1}_{\nu+i}K'_{\eta}K_{\phi}x |y_{1}K'^{-1}_{\nu_{1}}K'_{\eta_{1}}K_{\phi_{1}}x_{1}\rangle\\
=&-v^{(-i)\cdot(\mu+\phi+\nu-\eta)}t^{\langle i,\mu+\phi+\eta-\nu\rangle-\langle\mu+\phi+\eta-\nu,i\rangle}\\
&\times(yF_{i},x_{1})(y_{1},x)(K'_{\eta},K_{\phi_{1}})(K'_{\eta_{1}},K_{\phi})v^{2\rho\cdot(\nu+i)}\\
&+v^{-i\cdot\mu}t^{\langle i,\mu\rangle-\langle\mu,i\rangle}(F_{i}y,x)(y_{1},x)(K'_{\eta},K_{\phi_{1}})(K'_{\eta_{1}},K_{\phi})v^{2\rho\cdot(\nu+i)}\\
=&A\times(x,y_{1})(K_{\phi},K'_{\eta_{1}})(K_{\phi_{1}},K'_{\eta})v^{2\rho\cdot(\nu+i)},\\
\end{aligned}
\end{equation*}

where

$A=-v^{(-i)\cdot(\mu+\phi+\nu-\eta)}t^{\langle i,\mu+\phi+\eta-\nu\rangle-\langle\mu+\phi+\eta-\nu,i\rangle}(yF_{i},x) +v^{-i\cdot\mu}t^{\langle i,\mu\rangle-\langle\mu,i\rangle}(F_{i}y,x).$

Similarly,
\begin{equation*}
\begin{aligned}
  \langle yK'_{-\nu}K_{\eta}'K_{\phi}x|ad(S(F_{i}))u_{2}\rangle=B\times(x,y_{1})(K_{\phi},K'_{\eta_{1}})(K_{\phi_{1}},K'_{\eta})v^{2\rho\cdot\nu},\\
\end{aligned}
\end{equation*}

where

\begin{equation*}
\begin{aligned}
B&=-(v_{i}-v^{-1}_{i})^{-1}v^{(-i)\cdot(\nu_{1}-i)}t^{\langle i,\nu_{1}-i\rangle-\langle\nu_{1}-i,i\rangle}(p'_{i}(x),y)\\
&+(v_{i}-v^{-1}_{i})^{-1}v^{(i)\cdot(2i-\mu_{1}-\nu_{1})}t^{\langle i,\nu_{1}-\mu_{1}\rangle+\langle \nu_{1}-\mu_{1},i\rangle}(p_{i}(x_{1},y)).
\end{aligned}
\end{equation*}

Comparing $\langle ad(F_{i})u_{1}|u_{2}\rangle$ with $\langle u_{1}|ad(S(F_{i}))u_{2}\rangle$, we have $\langle ad(F_{i})u_{1}|u_{2}\rangle=\langle u_{1}|ad(S(F_{i}))u_{2}\rangle$ for all $i \in I$.

When  $u=E_{i}$. This case is similar to the case when $u=F_{i}$.

\end{proof}

In the rest of this paper, the Cartan matrix $(a_{ij})$, where $a_{ij}=i\cdot j,\ \text{for} \ i,j\in I$, is of finite type.

For $\eta,\phi \in Q\times Q$, we define $\chi_{\eta,\phi}$ : $Q\times Q\rightarrow \mathbb{Q}(v,t)$ by
$$\chi_{\eta,\phi}(\eta_{1},\phi_{1})=(K'_{\eta},K_{\phi_{1}})(K'_{\eta_{1}},K_{\phi}),\ \  \text{for all}\ (\eta_{1},\phi_{1})\in Q\times Q.$$
\begin{lem}
If $\chi_{\eta,\phi}=\chi_{\eta',\phi'}$, then $(\eta,\phi)=(\eta',\phi').$
\end{lem}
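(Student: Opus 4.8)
The plan is to prove that the assignment $(\eta,\phi)\mapsto\chi_{\eta,\phi}$ is injective by evaluating $\chi_{\eta,\phi}$ explicitly and then separating the contributions of the two indeterminates $v$ and $t$. First I would substitute the explicit value of the skew-Hopf pairing on the Cartan part, $(K'_{\mu},K_{\nu})=v^{\mu\cdot\nu}t^{\langle\nu,\mu\rangle-\langle\mu,\nu\rangle}$, into the definition of $\chi_{\eta,\phi}$. This yields
\begin{equation*}
\chi_{\eta,\phi}(\eta_{1},\phi_{1})=v^{\eta\cdot\phi_{1}+\eta_{1}\cdot\phi}\,t^{\langle\phi_{1},\eta\rangle-\langle\eta,\phi_{1}\rangle+\langle\phi,\eta_{1}\rangle-\langle\eta_{1},\phi\rangle}.
\end{equation*}
Since $v$ and $t$ are algebraically independent indeterminates over $\mathbb{Q}$, the equality $\chi_{\eta,\phi}=\chi_{\eta',\phi'}$ of functions on $Q\times Q$ forces the $v$-exponents to agree for every $(\eta_{1},\phi_{1})$; that is,
\begin{equation*}
\eta\cdot\phi_{1}+\eta_{1}\cdot\phi=\eta'\cdot\phi_{1}+\eta_{1}\cdot\phi'\qquad\text{for all }(\eta_{1},\phi_{1})\in Q\times Q.
\end{equation*}

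Next I would extract information from this single bilinear identity by specialization. Rewriting it as $(\eta-\eta')\cdot\phi_{1}+\eta_{1}\cdot(\phi-\phi')=0$ and setting $\eta_{1}=0$ gives $(\eta-\eta')\cdot\phi_{1}=0$ for all $\phi_{1}\in Q$, while setting $\phi_{1}=0$ gives $\eta_{1}\cdot(\phi-\phi')=0$ for all $\eta_{1}\in Q$. The crucial input is that, under the standing hypothesis that the Cartan matrix $(a_{ij})=(i\cdot j)$ is of finite type, the symmetric bilinear form ``$\cdot$'' on $Q$ is nondegenerate (indeed positive definite). Therefore $(\eta-\eta')\cdot\phi_{1}=0$ for all $\phi_{1}$ forces $\eta=\eta'$, and likewise $\eta_{1}\cdot(\phi-\phi')=0$ for all $\eta_{1}$ forces $\phi=\phi'$, which is the desired conclusion.

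The argument is essentially formal once the pairing is written out, so there is no serious obstacle; the only genuine content is the nondegeneracy of ``$\cdot$'', which is precisely where the finite-type assumption (stated just before the lemma) enters. I do not expect to need the $t$-exponents at all: matching them merely reproduces the antisymmetric identity $\langle\phi_{1},\eta-\eta'\rangle-\langle\eta-\eta',\phi_{1}\rangle+\langle\phi-\phi',\eta_{1}\rangle-\langle\eta_{1},\phi-\phi'\rangle=0$, which is automatically satisfied once $\eta=\eta'$ and $\phi=\phi'$ and hence yields no additional constraint. The one point to state carefully is the passage from equality in $\mathbb{Q}(v,t)$ to equality of exponents, which is legitimate because each $\chi_{\eta,\phi}(\eta_{1},\phi_{1})$ is a single Laurent monomial in the two independent variables.
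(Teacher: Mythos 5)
Your proof is correct and follows essentially the same route as the paper: evaluate $\chi_{\eta,\phi}$ on pairs with one coordinate zero (the paper uses $(0,\alpha_i)$ and $(\alpha_i,0)$), separate the $v$- and $t$-exponents, and invoke the nondegeneracy of the form $i\cdot j$ coming from the finite-type assumption. Your explicit remark that equality of single Laurent monomials in the independent indeterminates $v,t$ forces equality of exponents is a point the paper leaves implicit, but the substance of the argument is the same.
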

\begin{proof}
 By the definition, we get that for all $i \in I$

$$\chi_{\eta,\phi}(0,\alpha_{i})=(K'_{\eta},K_{i})=v^{\eta \cdot i }t^{\langle i,\eta\rangle-\langle\eta,i\rangle},$$
$$\chi_{\eta',\phi'}(0,\alpha_{i})=(K'_{\eta'},K_{i})=v^{\eta' \cdot i }t^{\langle i,\eta'\rangle-\langle\eta',i\rangle},$$
$$\chi_{\eta,\phi}(\alpha_{i},0)=(K'_{i},K_{\phi})=v^{i \cdot \phi }t^{\langle \phi,i\rangle-\langle i,\phi\rangle},$$
$$\chi_{\eta',\phi'}(\alpha_{i},0)=(K'_{i},K_{\phi'})=v^{i \cdot \phi' }t^{\langle \phi',i\rangle-\langle i,\phi'\rangle}.$$
Since the Cartan matrix $(a_{ij})$, where $a_{ij}=i\cdot j,\ \text{for} \ i,j\in I$, is non-degenerate, we have $\eta=\eta_{1}$ and $\phi=\phi_{1}$.

\end{proof}
\begin{prop}
The ad-invariant bilinear form $\langle\cdot|\cdot\rangle$ is non-degenerate on $U_{v,t}$.
\end{prop}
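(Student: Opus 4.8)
The plan is to exploit the block structure of $\langle\cdot|\cdot\rangle$ coming from the triangular decomposition, so that non-degeneracy of the whole form reduces to non-degeneracy of three elementary pieces. First I would use the decomposition $U_{v,t}=\bigoplus_{\nu,\mu\in Q^{+}} U^{-}_{-\nu}U^{0}U^{+}_{\mu}$ and observe, via Lemma 3.4, that the form vanishes between $U^{-}_{-\nu}U^{0}U^{+}_{\mu}$ and $U^{-}_{-\nu_{1}}U^{0}U^{+}_{\mu_{1}}$ unless $\nu_{1}=\mu$ and $\mu_{1}=\nu$. Hence $\langle\cdot|\cdot\rangle$ pairs the block $B_{\nu,\mu}:=U^{-}_{-\nu}U^{0}U^{+}_{\mu}$ only with the opposite block $B_{\mu,\nu}$, and it suffices to prove that each such pairing $B_{\nu,\mu}\times B_{\mu,\nu}\to\mathbb{Q}(v,t)$ is perfect: writing any element of the radical as a sum of homogeneous components $u_{\nu,\mu}\in B_{\nu,\mu}$, block-orthogonality forces each $u_{\nu,\mu}$ to lie in the radical of the single pairing $B_{\nu,\mu}\times B_{\mu,\nu}$, so it must vanish.

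Next I would read off from the definition of $\langle\cdot|\cdot\rangle$ that, restricted to $B_{\nu,\mu}\times B_{\mu,\nu}$, the form factors as a tensor product. On $y\,K'_{-\nu}K'_{\eta}K_{\phi}\,x$ with $y\in U^{-}_{-\nu}$, $x\in U^{+}_{\mu}$, paired with $y_{1}\,K'_{-\mu}K'_{\eta_{1}}K_{\phi_{1}}\,x_{1}$ with $y_{1}\in U^{-}_{-\mu}$, $x_{1}\in U^{+}_{\nu}$, the value is
$$(y,x_{1})\,(y_{1},x)\,\chi_{\eta,\phi}(\eta_{1},\phi_{1})\,v^{2\rho\cdot\nu},$$
in which the $U^{-}$-part of the first argument pairs with the $U^{+}$-part of the second through the skew-Hopf form $(\cdot,\cdot)$ on $U^{-}_{-\nu}\times U^{+}_{\nu}$, the $U^{+}$-part of the first pairs with the $U^{-}$-part of the second through $(\cdot,\cdot)$ on $U^{-}_{-\mu}\times U^{+}_{\mu}$, and the toral data pair through $\chi$. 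Since a tensor product of bilinear forms is non-degenerate precisely when every factor is, and since the scalar $v^{2\rho\cdot\nu}$ is a unit, the block pairing is perfect as soon as the three constituent pairings are non-degenerate.

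Two of these are immediate: the skew-Hopf pairing $(\cdot,\cdot)$ of Proposition 3.1 is non-degenerate on each graded piece $U^{-}_{-\nu}\times U^{+}_{\nu}$, which is the very property already invoked in Lemma 3.4. The remaining, and genuinely new, ingredient is non-degeneracy of the toral pairing $K'_{\eta}K_{\phi}\mapsto\chi_{\eta,\phi}$ on the group algebra $U^{0}\cong\mathbb{Q}(v,t)[Q\times Q]$. Here I would argue that the family $\{\chi_{\eta,\phi}\}_{(\eta,\phi)\in Q\times Q}$ consists of pairwise distinct group characters of $Q\times Q$: this is exactly the content of Lemma 3.6, whose proof uses non-degeneracy of the Cartan matrix. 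Distinct characters of an abelian group are linearly independent over any field, so if $z=\sum_{\eta,\phi}c_{\eta,\phi}K'_{\eta}K_{\phi}$ satisfies $\sum_{\eta,\phi} c_{\eta,\phi}\chi_{\eta,\phi}(\eta_{1},\phi_{1})=0$ for all $(\eta_{1},\phi_{1})$, then every $c_{\eta,\phi}=0$; thus the toral pairing is non-degenerate.

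Assembling the three statements through the tensor-product principle yields perfectness of each block pairing $B_{\nu,\mu}\times B_{\mu,\nu}$, and summing over $(\nu,\mu)\in Q^{+}\times Q^{+}$ gives non-degeneracy of $\langle\cdot|\cdot\rangle$ on all of $U_{v,t}$. I expect the main obstacle to be the toral factor: the whole argument hinges on upgrading the injectivity statement of Lemma 3.6 to genuine non-degeneracy of the $U^{0}$-pairing, which requires the linear-independence-of-characters step and not merely injectivity of $(\eta,\phi)\mapsto\chi_{\eta,\phi}$. The only other point needing care is making the ``tensor product of forms'' identification precise with respect to chosen bases of the graded pieces, but this is routine bookkeeping once the factorization displayed above is in place.
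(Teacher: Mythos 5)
Your proposal is correct and follows essentially the same route as the paper: reduce via Lemma 3.4 to the pairing of a block $U^{-}_{-\nu}U^{0}U^{+}_{\mu}$ against its opposite, use non-degeneracy of the skew-Hopf pairing on the graded pieces (the paper realizes your ``tensor product of forms'' factorization concretely by choosing dual bases $(v^{\mu}_{i},u^{\mu}_{j})=\delta_{i,j}$), and then kill the toral radical by Lemma 3.6 together with linear independence of distinct characters. The only difference is presentational --- abstract tensor-product principle versus explicit dual-basis computation --- so no further comment is needed.
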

\begin{proof}
Since  for $\mu,\mu_{1},\nu,\nu_{1} \in Q^{+}$, $\langle U^{-}_{-\nu}U^{0}U^{+}_{\mu} | U^{-}_{-\nu_{1}}U^{0}U^{+}_{\mu_{1}}\rangle=0$ unless $\nu=\mu_{1}$ and $\mu=\nu_{1}$, it is sufficient to argue that if $u \in U^{-}_{-\mu}U^{0}U^{+}_{\nu}$ and $\langle u|v\rangle=0$ for all $v \in U^{-}_{-\mu}U^{0}U^{+}_{\nu}$, then $u=0$.

For each $\mu \in Q^{+}$, choose a basis $u^{\mu}_{1},u^{\mu}_{2},\ldots,u^{\mu}_{d_{\mu}}$, where $d_{\mu}= \dim(U^{+}_{\mu})$. Then we can take a dual basis $v^{\mu}_{1},v^{\mu}_{2},\ldots,v^{\mu}_{d_{\mu}}$ of $U^{-}_{-\mu}$, i.e. $(v^{\mu}_{i},u^{\mu}_{j})=\delta_{i,j}$. Then we get a basis of $U_{-\nu}^{-}U^{0}U_{\mu}^{+}$ as
$$\{(v^{\nu}_{i}K'^{-1}_{\nu})K'_{\eta}K_{\phi}u^{\mu}_{j}|1\leq i\leq d_{\nu},\ 1\leq j \leq d_{\mu}, \ \text{and}\  \eta,\phi \in Q\}.$$

From the definition of the bilinear form $\langle\cdot|\cdot\rangle$, we obtain
\begin{equation*}
\begin{aligned}
&\ \ \ \ \ \langle(v^{\nu}_{i}K'^{-1}_{\nu})K'_{\eta}K_{\phi}u^{\mu}_{j}\ |\ (v^{\mu}_{k}K'^{-1}_{\mu})K'_{\eta_{1}}K_{\phi_{1}}u^{\nu}_{l
}\rangle\\
&=(v^{\nu}_{i},u^{\nu}_{l})(v^{\mu}_{k},u^{\mu}_{j})(K'_{\eta},K_{\phi_{1}})(K'_{\eta_{1}},K_{\phi})v^{2\rho\cdot\nu}\\
&=\delta_{i,l}\delta_{j,k}(K'_{\eta},K_{\phi_{1}})(K'_{\eta_{1}},K_{\phi})v^{2\rho\cdot\nu}.\\
\end{aligned}
\end{equation*}

Now write $u=\sum_{i,j,\eta,\phi}\theta_{i,j,\eta,\phi}(v^{\nu}_{i}K'^{-1}_{\nu})K'_{\eta}K_{\phi}u^{\mu}_{j}$, and $v=(v^{\mu}_{k}K'^{-1}_{\mu})K'_{\eta_{1}}K_{\phi_{1}}u^{\nu}_{l
}$ with $1\leq k\leq d_{\mu}$ and $1\leq l\leq d_{\nu}$. From the assumption $\langle u|v\rangle=0$, we have
                         $$\sum_{\eta,\phi}\theta_{i,j,\eta,\phi}(K'_{\eta},K_{\phi_{1}})(K'_{\eta_{1}},K_{\phi})v^{2\rho\cdot\nu}=0,$$
for all $1\leq k\leq d_{\mu}$ and $1\leq l\leq d_{\nu}$. That is,
                      $$\sum_{\eta,\phi}\theta_{i,j,\eta,\phi}\chi_{\eta,\phi}v^{2\rho\cdot\nu}=0.$$

It follows from \textrm{lemma 3.6} and linear independence of different characters  that $\theta_{i,j,\eta,\phi}=0$. Hence we get $\langle\cdot|\cdot\rangle$ is non-degenerate.
\end{proof}

\section{The centre of $U_{v,t}$}

We denote the centre of $U_{v,t}$ by $Z(U_{v,t})$.  And since the set $Z(U_{v,t})$  commutes with $K_{i}$, $K_{i}'$ for all $i \in I$, we have$$Z(U_{v,t}) \subseteq U_{0} =U^{0} \bigoplus_{\upsilon >0} U_{- \upsilon}^{-}U^{0}U_{ \upsilon}^{+}.$$

Besides, we define an algebra automorphism $\gamma^{-\rho}: U^{0} \rightarrow U^{0}  $ as
        $$  \gamma^{-\rho}(K_{\eta}K'_{\phi}) =v^{-\rho\cdot(\eta-\phi)}t^{\langle\eta+\phi,\rho\rangle-\langle \rho,\eta+\phi\rangle}K_{\eta}K'_{\phi},$$
        specially, we have $\gamma^{-\rho}(K_{\eta}K'_{-\eta})=v^{-2\rho\cdot \eta}K_{\eta}K'_{-\eta}.$

        The Harish-Chandra map $\xi : Z \rightarrow U^{0}$ is the composition of the two following maps:
$$\gamma^{-\rho} \circ \pi:U_{0}\rightarrow U^{0}\rightarrow U^{0}$$
where   $\pi$ is the canonical projection.

\begin{thm}
 The Harish-Chandra map $\xi$ : Z $\rightarrow$ $U^{0}$ is an injective algebra homomorphism.
\end{thm}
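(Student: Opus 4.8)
The plan is to prove the two assertions separately: that $\xi$ is an algebra homomorphism and that it is injective. Since $\gamma^{-\rho}$ is by construction an algebra automorphism of $U^0$, in both cases it suffices to treat the projection $\pi\colon U_0\to U^0$ restricted to $Z$, where $U_0=U^0\oplus\bigoplus_{\upsilon>0}U^-_{-\upsilon}U^0U^+_{\upsilon}$ and $\pi$ kills every summand with $\upsilon>0$. Throughout I write $z=\sum_{\nu\in Q^+}z_\nu$ with $z_\nu\in U^-_{-\nu}U^0U^+_\nu$, so that $\pi(z)=z_0$.

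For multiplicativity I would exploit the Verma modules $M_{v,t}(\lambda)$. Since every $z_\nu$ with $\nu>0$ ends in a factor from $U^+_\nu$ and $E_iv_\lambda=0$, only $z_0=\pi(z)$ survives on the highest weight vector, giving $z\,v_\lambda=\chi_\lambda(\pi(z))\,v_\lambda$, where $\chi_\lambda\colon U^0\to\Q(v,t)$ is the character by which $U^0$ acts on the weight space. Because $z$ is central and $M_{v,t}(\lambda)$ is generated by $v_\lambda$, the element $z$ acts as the single scalar $\chi_\lambda(\pi(z))$ on all of $M_{v,t}(\lambda)$. Applying this to $z_1,z_2\in Z$ and to their product yields $\chi_\lambda(\pi(z_1z_2))=\chi_\lambda(\pi(z_1))\,\chi_\lambda(\pi(z_2))=\chi_\lambda(\pi(z_1)\pi(z_2))$ for every $\lambda\in\Lambda^+$; since these characters are Zariski dense in $\Hom(U^0,\Q(v,t))$, the two elements $\pi(z_1z_2)$ and $\pi(z_1)\pi(z_2)$ of $U^0$ coincide. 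Hence $\pi|_Z$, and therefore $\xi$, is multiplicative.

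For injectivity I would argue purely inside the triangular decomposition, avoiding weight modules. Suppose $z\in Z$ with $\xi(z)=0$, equivalently $z_0=\pi(z)=0$, and suppose $z\neq0$; choose $\nu_0>0$ of minimal height with $z_{\nu_0}\neq0$ and expand $z_{\nu_0}$ in a basis $\{y_a\otimes K_\eta K'_\phi\otimes x_c\}$ of $U^-_{-\nu_0}\otimes U^0\otimes U^+_{\nu_0}$. I would feed this into $[z,F_i]=0$. By Lemma 3.2, $xF_i-F_ix=(p_i(x)K_i-K'_ip'_i(x))/(v_i-v^{-1}_i)$, so $[z_\mu,F_i]$ splits into a piece lying in $U^-_{-(\mu+i)}U^0U^+_{\mu}$ (from moving $F_i$ through the $U^-U^0$ part) and a piece lying in $U^-_{-\mu}U^0U^+_{\mu-i}$ (the $p_i,p'_i$ term). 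Reading off the component of $[z,F_i]=0$ in the bidegree $U^-_{-\nu_0}U^0U^+_{\nu_0-i}$, the only contribution from the minimal layer is the $p_i,p'_i$ term of $z_{\nu_0}$, the competing pieces coming from layers of strictly smaller height, which vanish by minimality. Since $K_{\eta+i}K'_\phi$ and $K_\eta K'_{\phi+i}$ are distinct monomials of $U^0$, linear independence in the triangular decomposition separates the $p_i$ and $p'_i$ contributions and forces $p_i(\bar x)=p'_i(\bar x)=0$ for all $i$ and every $U^+_{\nu_0}$-coefficient $\bar x$ of $z_{\nu_0}$. The maps $p_i,p'_i$ are the twisted skew-derivations ${}_ir,r_i$ of Section~\ref{sec:freealgebra}, and the nondegeneracy of the form in Proposition~\ref{prop:bilinearform} (whose radical is exactly $\mathfrak J$) gives $\bigcap_i\ker r_i\cap U^+_\nu=0$ for $\nu\neq0$; hence $\bar x=0$, so $z_{\nu_0}=0$, a contradiction. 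Therefore $z=0$ and $\xi$ is injective.

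The main obstacle is the bidegree bookkeeping in the injectivity step: one must verify carefully that, under the straightening rules of $U_{v,t}$, the component of $[z,F_i]$ in $U^-_{-\nu_0}U^0U^+_{\nu_0-i}$ receives nothing but the $p_i,p'_i$ term of the minimal layer, and that the identification $p_i={}_ir$, $p'_i=r_i$ (hence $\bigcap_i\ker=\Q(v,t)\cdot1$ on $U^+$) is exact once the $(v,t)$-twists are tracked. A secondary delicate point, already signalled by the odd/even dichotomy of \cite{BKL}, is ensuring the characters $\chi_\lambda$ are genuinely dense enough in the $(v,t)$-torus for the multiplicativity argument; if that density fails over $\Q(v,t)$, I would instead seek a minimal-layer argument for multiplicativity paralleling the injectivity step, reducing $\pi(z_1z_2)=\pi(z_1)\pi(z_2)$ to the vanishing of the height-zero part of the product of the two height-positive tails.
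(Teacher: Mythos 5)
Your injectivity argument is essentially the paper's own proof reflected to the positive side: the paper commutes $z$ with $E_i$ and uses the maps $a_i,a'_i$ on $U^-$ together with the fact (cited from \cite[Lemma 9]{FL}) that a nonzero element of $U^-_{-\nu}$ with $\nu>0$ is not killed by all the $a_i$, whereas you commute with $F_i$ and use $p_i,p'_i$ on $U^+$; either works. One point you gloss over: after extracting the component in $U^-_{-\nu_0}U^0U^+_{\nu_0-i}$ you obtain a sum over all $(\eta,\phi)$ of terms proportional to $y\,K_{\eta+i}K'_\phi\,p_i(x)$ and $y\,K_\eta K'_{\phi+i}\,p'_i(x)$, and the $p_i$-contribution attached to $(\eta,\phi)$ lands on the \emph{same} $U^0$-monomial as the $p'_i$-contribution attached to $(\eta+i,\phi-i)$, so ``distinct monomials'' alone does not separate the two families. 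The paper resolves this by singling out a nonzero coefficient with $\phi'$ maximal, so that the monomial $K_{\eta'}K'_{\phi'+i}$ can only be hit from one source; you need the analogous extremal choice.

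The genuine gap is in your multiplicativity argument. The characters $\chi_\lambda$ of weight modules do not separate points of $U^0$ in the two-parameter setting: $K'_\eta K_\phi$ acts on the $\lambda$-weight space by $v^{(\phi-\eta)\cdot\lambda}\,t^{\langle\lambda,\eta+\phi\rangle-\langle\eta+\phi,\lambda\rangle}$, which sees $(\eta,\phi)$ only through $\phi-\eta$ and the antisymmetrized pairing of $\eta+\phi$ with $\lambda$. Whenever there is a nonzero $\eta\in Q$ with $\langle i,\eta\rangle=\langle\eta,i\rangle$ for all $i$ (equivalently $[\eta,i]-[i,\eta]=0$ --- exactly the situation of Theorem 5.2 and the source of the extra central elements and of the odd-rank phenomenon in \cite{BKL}), the elements $K'_\eta K_\eta$ and $1$ are indistinguishable by every $\chi_\lambda$. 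This is precisely why the paper must introduce the two-parameter family $\varrho^{\lambda,\mu}$ in Lemmas 4.2 and 4.3 to separate $U^0$. Hence $\chi_\lambda(\pi(z_1z_2))=\chi_\lambda(\pi(z_1))\chi_\lambda(\pi(z_2))$ for all $\lambda$ does not imply $\pi(z_1z_2)=\pi(z_1)\pi(z_2)$, and your density claim fails. Fortunately no representation theory is needed here: as the paper observes, $K=\bigoplus_{\nu>0}U^-_{-\nu}U^0U^+_\nu$ is a two-sided ideal of $U_0$ (straightening shows that the product of elements of $U^-_{-\nu}U^0U^+_\nu$ and $U^-_{-\mu}U^0U^+_\mu$ lies in $\bigoplus_{\gamma\geq\max(\nu,\mu)}U^-_{-\gamma}U^0U^+_\gamma$, so the $U^0$-component of the product of two elements of $K$ vanishes), whence $\pi$ is an algebra homomorphism on all of $U_0$ and in particular on $Z$.
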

\begin{proof}
   Let $U_{0}=U^{0} \oplus K$,  where $K =\bigoplus_{\nu>0}U_{-\nu}^{-}U^{0}U_{\nu}^{+}$. Since $K$ is the two-sided ideal in $U_{0}$ which is the kernel of $\pi$, and hence of $\xi$, we get $\xi$ is an algebra homomorphism. Assume $z\in Z(U_{v,t})$ and $\xi(z)=0$, to prove $\pi$ is injective, it is enough to show $z$ must be zero. Since $z\in Z(U_{v,t})$ and $\xi(z)=0$,  $z$ must have the form $z=\Sigma_{\nu>0}z_{\nu}$   with    $z_{\nu} \in U_{-\nu}^{-}U^{0}U_{\nu}^{+}$. If $z$ is not zero , we can choose one $\nu \in Q^{+}$/${0}$ minimal with the property that $z_{\nu}\neq 0$. Fix  $\{x_{l}\}$ as bases of  $U_{\nu}^{+}$. We can write $z_{\nu}=\bigoplus_{\{x_{l}\}}\bigoplus_{\eta,\phi \in Q}y_{\eta,\phi,l}K_{\eta}K_{\phi}'x_{l}$,  where $y_{\mu,\phi,l}\in U_{-\nu}^{-}$.
From the fact $\{K_{\eta}K_{\phi}'\}_{\eta,\phi \in Q}$ can form a basis of $U^{0}$ and $\{x_{l}\}$ is a basis of $U_{\nu}^{+}$, there must exist some $x_{l}$ and $\eta'$, $\phi'$ such that
$$y_{\eta',\phi',l'}K_{\eta'}K_{\phi'}'x_{l'}\neq0$$
and $\phi'$ is the maximal one among the nonzero terms.
Since $y_{\eta',\phi',l'}\neq0$, referring to \cite{FL}, lemma 9, there exists some $i \in I$, such that $a_{i}(y_{\eta',\phi',l'})\neq0$.
Write $$0=E_{i}z-zE_{i}=\sum_{\gamma \neq \nu}(E_{i}z_{\gamma}-z_{\gamma}E_{i})+E_{i}z_{\nu}-z_{\nu}E_{i}.$$

Consider all the terms belonging to $\ U^{-}_{-(\nu-i)}U^{0}U^{+}_{\nu}$  and note that $E_{i}z_{\nu}-z_{\nu}E_{i}$ is the only term in $E_{i}z-zE_{i}$ involving $\ U^{-}_{-(\nu-i)}U^{0}U^{+}_{\nu}$. We write

$$E_{i}z_{\nu}-z_{\nu}E_{i}$$
$$=E_{i}(\bigoplus_{\{x_{l}\}}\bigoplus_{\eta,\phi \in Q}y_{\eta,\phi,l}K_{\eta}K_{\phi}'x_{l})-(\bigoplus_{\{x_{l}\}}\bigoplus_{\eta,\phi \in Q}y_{\eta,\phi,l}K_{\eta}K_{\phi}'x_{l})E_{i}.$$

Substitute $x^{-}E_{i}-E_{i}x^{-}=\frac{a_{i}(x)K'_{i}-K_{i}a'_{i}(x)}{v_{i}-v^{-1}_{i}}$ into the above formula, then we must have the conclusion
$$ (v_{i}-v^{-1}_{i})^{-1}\bigoplus_{\{x_{l}\}}\bigoplus_{\eta,\phi \in Q}(a_{i}(y_{\eta,\phi,l})K'_{i}-K_{i}a'_{i}(y_{\eta,\phi,l}))K_{\eta}K_{\phi}'x_{l}=0.$$

Since $\{K_{\eta}K_{\phi}'\}_{\eta,\phi \in Q}$ can form a basis of $U^{0}$ and $\{x_{l}\}$ is a basis of $U_{\nu}^{+}$, we must have $a_{i}(y_{\eta',\phi',l})K_{\eta'}K_{\phi'+i}'x_{l'}$=0.  So $a_{i}(y_{\eta',\phi',l'})=0$, which is a contradiction.

\end{proof}

  Define an algebra homomorphism $\varrho^{\lambda}$ :  $U^{0}$  $\rightarrow$   $\mathbb{Q}(v,t)$ by $$\varrho^{\lambda}(K_{\eta}^{'}K_{\phi})=v^{\lambda\cdot\phi-\eta}t^{\langle\phi+\eta,\lambda\rangle-\langle\lambda,\phi+\eta\rangle},$$
for all $\lambda \in \Lambda$ and $\eta \times \phi \in Q\times Q$.

   Specially, when $\lambda \in Q$, we have
  $$\varrho^{\lambda}(K_{i})=( K_{i}, K_{\lambda}^{'})=v^{\lambda\cdot i}t^{\langle i,\lambda\rangle-\langle\lambda,i\rangle},$$
  $$\varrho^{\lambda}(K_{i}^{'})=( K_{\lambda},  K_{i}^{'})^{-1}=v^{-i\cdot\lambda}t^{\langle i,\lambda\rangle-\langle\lambda,i\rangle}.$$

  Define an algebra homomorphism $\varrho^{0,\lambda}$ :  $U^{0}$  $\rightarrow$   $\mathbb{Q}(v,t)$ by

$$\varrho^{0,\lambda}(K_{i}^{'}K_{j})=v^{2(i+j)\cdot\lambda},$$
for all $\lambda \in \Lambda$ and $i,j \in I$.

    We also define an algebra homomorphism $\varrho^{\lambda,\mu}$ :  $U^{0}$  $\rightarrow$   $\mathbb{Q}(v,t)$ by
    $$\varrho^{\lambda,\mu}(K^{'}_{\eta}K_{\phi})=\varrho^{\lambda}(K^{'}_{\eta}K_{\phi}) \varrho^{0,\mu}(K^{'}_{\eta}K_{\phi}),$$
for all $\lambda \times \mu \in \Lambda \times \Lambda$ and $\eta \times \phi \in Q\times Q$.

\begin{lem}
 Let $u=K_{\eta}^{'}K_{\phi}$, $\eta,\phi \in Q$. If $\varrho^{\lambda,\mu}(u)=1$ for  all $ \lambda,\mu \in \Lambda$, then $u=1$.
\end{lem}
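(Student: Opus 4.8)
The plan is to evaluate $\varrho^{\lambda,\mu}(u)$ explicitly as a single Laurent monomial in the two indeterminates $v$ and $t$, and then exploit their algebraic independence over $\Q$. First I would note that in $U^{0}$ the elements $K'_{\eta}K_{\phi}$ are linearly independent and uniquely determined by the pair $(\eta,\phi)\in Q\times Q$, so that the assertion $u=1$ is equivalent to $\eta=\phi=0$. Combining the definitions of $\varrho^{\lambda}$ and $\varrho^{0,\mu}$ (the latter contributing only a power of $v$) gives
$$\varrho^{\lambda,\mu}(K'_{\eta}K_{\phi})=v^{\lambda\cdot(\phi-\eta)+2\mu\cdot(\eta+\phi)}\,t^{\langle\phi+\eta,\lambda\rangle-\langle\lambda,\phi+\eta\rangle}.$$
Since $v$ and $t$ are algebraically independent, the hypothesis $\varrho^{\lambda,\mu}(u)=1$ for all $\lambda,\mu\in\Lambda$ forces the exponents of $v$ and of $t$ each to vanish for every choice of $\lambda$ and $\mu$.

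The key step is then to decouple the two weight parameters. Looking at the exponent of $v$, namely $\lambda\cdot(\phi-\eta)+2\mu\cdot(\eta+\phi)=0$, I would first set $\mu=0$ and let $\lambda$ range over all of $\Lambda$, obtaining $\lambda\cdot(\phi-\eta)=0$ for every $\lambda\in\Lambda$; then set $\lambda=0$ and let $\mu$ range freely, obtaining $\mu\cdot(\eta+\phi)=0$ for every $\mu\in\Lambda$. The exponent of $t$ supplies only the relation $\langle\phi+\eta,\lambda\rangle=\langle\lambda,\phi+\eta\rangle$, which is automatically satisfied once $\eta=\phi=0$; thus the $v$-exponent alone carries the full force of the hypothesis.

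To finish, I would invoke the non-degeneracy of the symmetric bilinear form $\cdot$, valid because the Cartan matrix $(a_{ij})=(i\cdot j)$ is of finite type, together with the fact that the weight lattice $\Lambda$ has full rank and hence spans $\Lambda\otimes_{\Z}\Q$. An element of this space orthogonal to all of $\Lambda$ under a non-degenerate form must be zero, so $\lambda\cdot(\phi-\eta)=0$ for all $\lambda$ yields $\phi-\eta=0$, and $\mu\cdot(\eta+\phi)=0$ for all $\mu$ yields $\eta+\phi=0$. Adding these gives $2\phi=0$, hence $\phi=0$ and then $\eta=0$, so that $u=K'_{0}K_{0}=1$. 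The argument is essentially linear algebra once the monomial form is written down; the only points needing care are the passage from an identity in $\Q(v,t)$ to the separate vanishing of the $v$- and $t$-exponents, and the combined use of the spanning property of $\Lambda$ with the non-degeneracy of the form. Neither is a genuine obstacle, so I expect this to be the most routine of the lemmas in this section.
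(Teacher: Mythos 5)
Your proof is correct and follows essentially the same route as the paper: the paper likewise decouples the two parameters by evaluating $\varrho^{0,\mu}$ and $\varrho^{\lambda,0}$ separately to obtain $\eta+\phi=0$ and $\eta-\phi=0$, and then invokes the non-degeneracy of the Cartan form. Your explicit appeal to the algebraic independence of $v$ and $t$ to split the exponents is a point the paper leaves implicit, but the substance of the argument is identical.
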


\begin{proof}
  If $\varrho^{0,\lambda}(K_{\eta}^{'}K_{\phi})=0 $ for all $\lambda \in \Lambda$, then $\eta+\phi=0$ from the definition of $\varrho^{0,\lambda}$.  If $\varrho^{\lambda,0}(K_{\eta}^{'}K_{\phi})=0 $ for all $\lambda \in \Lambda$, consider
 $$\varrho^{\lambda}(K_{\eta}^{'}K_{\phi})=v^{\lambda\cdot\phi-\eta}t^{\langle\phi+\eta,\lambda\rangle-\langle\lambda,\phi+\eta\rangle},$$
 since $\lambda \cdot(\eta-\phi)=0$ for all $\lambda \in Q$ and the Cartan matrix  $(a_{ij})$, where $a_{ij}=i\cdot j\ \text{for} \ i,j\in I$, is non-degenerate,  we can conclude $\eta-\phi=0$. Remembering $\eta+\phi=0$, $\eta=\phi=0$ is concluded.
\end{proof}

\begin{lem}
 If $u \in U^{0}$, $\varrho^{\lambda,\mu}(u)=0$, for all $(\lambda,\mu) \in \Lambda \times \Lambda$, then $u=0$.
\end{lem}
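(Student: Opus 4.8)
The plan is to expand $u$ in the standard monomial basis of $U^{0}$ and to reduce the statement to the linear independence of distinct group characters, the distinctness being precisely what the preceding lemma supplies. First I would write
$$u=\sum_{(\eta,\phi)}c_{\eta,\phi}\,K'_{\eta}K_{\phi},$$
a finite sum over pairs $(\eta,\phi)\in Q\times Q$ with $c_{\eta,\phi}\in\mathbb{Q}(v,t)$, using that $\{K'_{\eta}K_{\phi}\}_{\eta,\phi\in Q}$ is a basis of $U^{0}$. Since $\varrho^{\lambda,\mu}$ is by construction an algebra homomorphism, the hypothesis becomes
$$\sum_{(\eta,\phi)}c_{\eta,\phi}\,\varrho^{\lambda,\mu}(K'_{\eta}K_{\phi})=0\qquad\text{for all }(\lambda,\mu)\in\Lambda\times\Lambda,$$
and the goal is to force every coefficient $c_{\eta,\phi}$ to vanish.

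Next I would view each basis element as producing a function of the parameter. Set $\psi_{\eta,\phi}(\lambda,\mu)=\varrho^{\lambda,\mu}(K'_{\eta}K_{\phi})$. From the explicit formulas $\varrho^{\lambda}(K'_{\eta}K_{\phi})=v^{\lambda\cdot(\phi-\eta)}t^{\langle\phi+\eta,\lambda\rangle-\langle\lambda,\phi+\eta\rangle}$ and $\varrho^{0,\mu}(K'_{\eta}K_{\phi})=v^{2(\eta+\phi)\cdot\mu}$, every exponent is linear in $(\lambda,\mu)$, so $\psi_{\eta,\phi}$ is a homomorphism from the additive group $(\Lambda\times\Lambda,+)$ to the multiplicative group $\mathbb{Q}(v,t)^{\times}$; that is, each $\psi_{\eta,\phi}$ is a character of $\Lambda\times\Lambda$.

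The key step is to verify that these characters are pairwise distinct for distinct pairs $(\eta,\phi)$. Suppose $\psi_{\eta,\phi}=\psi_{\eta',\phi'}$. Because $\varrho^{\lambda,\mu}$ is an algebra homomorphism, dividing one value by the other yields $\varrho^{\lambda,\mu}(K'_{\eta-\eta'}K_{\phi-\phi'})=1$ for all $(\lambda,\mu)\in\Lambda\times\Lambda$. The preceding lemma then forces $K'_{\eta-\eta'}K_{\phi-\phi'}=1$, and since the $K'_{\zeta}K_{\psi}$ form a basis this gives $\eta=\eta'$ and $\phi=\phi'$. Hence distinct pairs index distinct characters. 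With distinctness in hand, the finitely many characters $\psi_{\eta,\phi}$ occurring in the expansion of $u$ are linearly independent over $\mathbb{Q}(v,t)$ by the classical Dedekind--Artin theorem on linear independence of distinct characters of a group into a field (the same principle already used in the proof that $\langle\cdot|\cdot\rangle$ is non-degenerate). Therefore the displayed vanishing relation forces $c_{\eta,\phi}=0$ for every $(\eta,\phi)$, so $u=0$.

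I expect the only point requiring genuine care to be this distinctness reduction, namely exploiting the multiplicativity of $\varrho^{\lambda,\mu}$ to pass from an equality of characters to the single relation $\varrho^{\lambda,\mu}(K'_{\eta-\eta'}K_{\phi-\phi'})=1$ so that the preceding lemma applies; the remaining ingredients (the basis expansion and the appeal to linear independence of characters) are routine.
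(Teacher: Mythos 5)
Your proposal is correct and follows essentially the same route as the paper: expand $u$ in the basis $\{K'_{\eta}K_{\phi}\}$, observe that each pair $(\eta,\phi)$ yields a character of $\Lambda\times\Lambda$ which the preceding lemma shows are pairwise distinct, and conclude by linear independence of distinct characters. The only difference is that you spell out the reduction to the preceding lemma (dividing two equal characters and using multiplicativity of $\varrho^{\lambda,\mu}$), a step the paper leaves implicit.
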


\begin{proof}
Corresponding to each $(\eta,\phi) \in Q\times Q$ is the character on the group $\Lambda \times \Lambda$ defined by
                                                           $$(\lambda,\mu) \mapsto \varrho^{\lambda,\mu}(K'_{\eta}K_{\phi}).$$
 It follows from the \textrm{lemma 4.2} that different $(\eta,\phi)$ gives to different characters.

  Suppose now that $u=\sum\theta_{\eta,\phi}K'_{\eta}K_{\phi}$, where $ \theta_{\eta,\phi} \in \mathbb{Q}(v,t)$. By assumption,
                                                           $$\sum\theta_{\eta,\phi} \varrho^{\lambda,\mu}(K'_{\eta}K_{\phi})=0,$$
  for all $(\lambda,\mu) \in \Lambda \times \Lambda$. By the linear independence of different characters, $\theta_{\eta,\phi}=0$ for all $\eta \times \phi \in Q \times Q$ , so $u=0$.
\end{proof}

Define the Weyl group  action on $U^{0}_{\flat}=\bigoplus_{\eta\in Q} \mathbb{Q}(v,t)K'_{\eta}K_{-\eta}$ as follows: $$\sigma (K'_{\eta}K_{-\eta})=K'_{\sigma(\eta)}K_{\sigma(-\eta)}.$$

\begin{thm}
For $U_{v,t}$, $\varrho^{\sigma(\lambda),\mu}(u)= \varrho^{\lambda,\mu}(\sigma^{-1}(u))$, where $u \in U^{0}_{\flat}$, $\sigma \in W$ and $\lambda,\mu \in \Lambda$.
\end{thm}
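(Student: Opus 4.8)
The plan is to reduce the asserted intertwining identity to the single fact that the symmetric form $i\cdot j$ is invariant under the Weyl group $W$, which holds because the Cartan matrix is of finite type. Both maps $u\mapsto \varrho^{\sigma(\lambda),\mu}(u)$ and $u\mapsto \varrho^{\lambda,\mu}(\sigma^{-1}(u))$ are $\mathbb{Q}(v,t)$-linear on $U^{0}_{\flat}$ (the first because $\varrho^{\sigma(\lambda),\mu}$ is an algebra homomorphism, the second because $\sigma$ acts linearly on $U^{0}_{\flat}$ and $\varrho^{\lambda,\mu}$ is an algebra homomorphism), so it suffices to check their agreement on the spanning elements $u=K'_{\eta}K_{-\eta}$, $\eta\in Q$.

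First I would evaluate $\varrho^{\lambda,\mu}$ on such an element. Setting $\phi=-\eta$ in the definition of $\varrho^{\lambda}$, the quantity $\phi+\eta$ governing the $t$-exponent vanishes, so $\varrho^{\lambda}(K'_{\eta}K_{-\eta})=v^{\lambda\cdot(-2\eta)}=v^{-2\lambda\cdot\eta}$; likewise $\varrho^{0,\mu}(K'_{\eta}K_{-\eta})=v^{2(\eta-\eta)\cdot\mu}=1$. Hence $\varrho^{\lambda,\mu}(K'_{\eta}K_{-\eta})=v^{-2\lambda\cdot\eta}$, with no dependence on $\mu$ and no surviving $t$-factor. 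This collapse is precisely what singles out $U^{0}_{\flat}$ as the natural domain for a $W$-action, and it is the conceptual point behind the restriction in the statement.

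With this formula in hand the two sides become transparent. The left-hand side is $\varrho^{\sigma(\lambda),\mu}(K'_{\eta}K_{-\eta})=v^{-2\sigma(\lambda)\cdot\eta}$. For the right-hand side, the definition of the $W$-action gives $\sigma^{-1}(K'_{\eta}K_{-\eta})=K'_{\sigma^{-1}(\eta)}K_{-\sigma^{-1}(\eta)}$, which again lies in $U^{0}_{\flat}$ (as $W$ preserves $Q$), so $\varrho^{\lambda,\mu}(\sigma^{-1}(K'_{\eta}K_{-\eta}))=v^{-2\lambda\cdot\sigma^{-1}(\eta)}$.

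It then remains only to identify the two exponents, i.e. to prove $\sigma(\lambda)\cdot\eta=\lambda\cdot\sigma^{-1}(\eta)$. Substituting $\sigma^{-1}(\eta)$ for the second argument in the isometry relation $\sigma(a)\cdot\sigma(b)=a\cdot b$ yields exactly this. The only genuine mathematical input is thus the orthogonality of $\sigma$ with respect to the symmetric form $\cdot$; I expect this to be the one step worth flagging explicitly, while the remainder is bookkeeping verifying that the $t$- and $\mu$-dependent factors disappear on $U^{0}_{\flat}$.
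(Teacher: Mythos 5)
Your proof is correct and follows essentially the same route as the paper: reduce to $u=K'_{\eta}K_{-\eta}$, observe that the $\varrho^{0,\mu}$ factor equals $1$ on both sides, and reduce the remaining identity to the $W$-invariance of the form $\cdot$. You are in fact more explicit than the paper, which asserts $\varrho^{\sigma(\lambda),0}(K'_{\eta}K_{-\eta})=\varrho^{\lambda,0}(\sigma^{-1}(K'_{\eta}K_{-\eta}))$ without writing out the computation $v^{-2\sigma(\lambda)\cdot\eta}=v^{-2\lambda\cdot\sigma^{-1}(\eta)}$ that you correctly supply.
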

\begin{proof}
Assume that $u=K'_{\eta}K_{-\eta}$. We only need to check the theorem for each $\sigma_{i}$. Note that $\varrho^{0,\mu}(K'_{\eta}K_{-\eta})=\varrho^{0,\mu}(\sigma^{-1}(K'_{\eta}K_{-\eta}))=1$ from the definition of $\varrho^{0,\mu}$.
 So, we get $\varrho^{\sigma(\lambda),\mu}(u)=\varrho^{\sigma(\lambda),0}(K'_{\eta}K_{-\eta})\varrho^{0,\mu}(K'_{\eta}K_{-\eta})=\varrho^{\lambda,0}(\sigma^{-1}(K'_{\eta}K_{-\eta}))\varrho^{0,\mu}(\sigma^{-1}(K'_{\eta}K_{-\eta}))$
=$\varrho^{\lambda,\mu}(\sigma^{-1}(u))$.
\end{proof}

\begin{lem}
 We set $\iota$  as the unit map of $U_{v,t}$ and $\epsilon$ the co-unit map of $U_{v,t}$. Then   $z \in Z(U_{v,t})$ if and only if $ ad(x)z=(\iota \circ \epsilon)(x)z$ for all $u \in U_{v,t}$.
\end{lem}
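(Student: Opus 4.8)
The plan is to prove both implications purely formally, using only the Hopf algebra axioms for $U_{v,t}$ together with the explicit formula $ad(x)z=\sum_{(x)}x_{(1)}zS(x_{(2)})$ recorded earlier. The axioms I will invoke are the counit identities $\sum_{(x)}\epsilon(x_{(1)})x_{(2)}=x=\sum_{(x)}x_{(1)}\epsilon(x_{(2)})$ and the antipode identities $\sum_{(x)}S(x_{(1)})x_{(2)}=\epsilon(x)1=\sum_{(x)}x_{(1)}S(x_{(2)})$. Throughout I read the target condition $(\iota\circ\epsilon)(x)z$ simply as $\epsilon(x)z$, since $\iota$ is the unit and $\epsilon$ the counit.

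For the forward direction I would assume $z\in Z(U_{v,t})$ and pull $z$ past $x_{(1)}$ in the adjoint formula: since $z$ is central,
$$ad(x)z=\sum_{(x)}x_{(1)}zS(x_{(2)})=z\sum_{(x)}x_{(1)}S(x_{(2)})=\epsilon(x)z,$$
where the last equality is the antipode axiom $\sum_{(x)}x_{(1)}S(x_{(2)})=\epsilon(x)1$. This is exactly $ad(x)z=(\iota\circ\epsilon)(x)z$.

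The substantive part is the converse, and the key tool I would establish first is the general identity
$$\sum_{(x)}\bigl(ad(x_{(1)})z\bigr)\,x_{(2)}=xz\qquad\text{for all }x,z\in U_{v,t}.$$
To prove it I would expand the left-hand side by coassociativity, writing $(\mathrm{id}\otimes\Delta)\Delta(x)=\sum_{(x)}x_{(1)}\otimes x_{(2)}\otimes x_{(3)}$, so that the left-hand side becomes $\sum_{(x)}x_{(1)}zS(x_{(2)})x_{(3)}$; then collapse the last two tensor factors using the antipode axiom $\sum S(x_{(2)})x_{(3)}=\epsilon(x_{(2)})1$, and finish with the counit axiom $\sum_{(x)}x_{(1)}\epsilon(x_{(2)})=x$. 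Granting this identity, I would then feed in the hypothesis $ad(x_{(1)})z=\epsilon(x_{(1)})z$ to obtain
$$xz=\sum_{(x)}\bigl(ad(x_{(1)})z\bigr)x_{(2)}=\sum_{(x)}\epsilon(x_{(1)})z\,x_{(2)}=z\sum_{(x)}\epsilon(x_{(1)})x_{(2)}=zx,$$
again by the counit axiom. Since $x$ is arbitrary, $z$ commutes with all of $U_{v,t}$, i.e. $z\in Z(U_{v,t})$.

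I do not expect a genuine obstacle here, as this is a formal consequence of the Hopf algebra structure; the only real care is bookkeeping in Sweedler notation when deriving the auxiliary identity, where one must apply coassociativity \emph{before} the antipode axiom and keep straight which comultiplication factor the antipode acts on. Everything else is routine.
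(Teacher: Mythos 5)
Your proof is correct, and the forward direction is verbatim the paper's: pull the central $z$ out of $\sum_{(x)}x_{(1)}zS(x_{(2)})$ and apply the antipode axiom. For the converse, however, you take a genuinely different route. The paper specializes the hypothesis to the generators $K_i, K_i', E_i, F_i$ and uses their explicit coproducts to compute $ad(K_i)z=K_izK_i^{-1}$, $ad(E_i)z=E_iz-K_izK_i^{-1}E_i$ and $ad(F_i)z=(-zF_i+F_iz)(K_i')^{-1}$, concluding that $z$ commutes with each generator and hence lies in the centre. You instead prove the universal identity $\sum_{(x)}\bigl(ad(x_{(1)})z\bigr)x_{(2)}=xz$ via coassociativity, the antipode axiom applied to $S(x_{(2)})x_{(3)}$, and the counit axiom, and then substitute the hypothesis to get $xz=zx$ directly for every $x$; your bookkeeping here is accurate (the scalar $\epsilon(x_{(2)})$ slides past $z$, so the sandwiched position of $z$ causes no trouble). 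Your argument is more general --- it works in any Hopf algebra with no reference to a presentation, and it uses the full strength of the hypothesis only through the single identity --- while the paper's argument is more concrete and makes visible exactly which commutation relations are being extracted; both are complete proofs.
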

\begin{proof}
Let $z \in Z(U_{v,t})$. Then for all $x \in U_{v,t}$, we have
$$ad(x)z=\sum_{(x)}x_{(1)}zS(x_{(2)})=z\sum_{(x)}x_{(1)}S(x_{(2)})=(\iota\circ\varepsilon)(x)z.$$

Conversely, assume that $ad(x)z=(\iota\circ\varepsilon)(x)z$ for all $x \in U_{v,t}$. We can consider it on the generators of $U_{v,t}$. Then
$$K_{i}zK^{-1}_{i}=ad(K_{i})z=(\iota\circ\varepsilon)(K_{i})z=z.$$

Similarly, we have $K'_{i}zK'^{-1}_{i}=z$. Furthermore, we have
$$0=(\iota\circ\varepsilon)(E_{i})z=ad(E_{i})z=E_{i}z-K_{i}zK^{-1}_{i}E_{i}=E_{i}z-zE_{i}$$
and
$$0=(\iota\circ\varepsilon)(F_{i})z=ad(F_{i})z=-zF_{i}(K'_{i})^{-1}+F_{i}z(K'_{i})^{-1}=(-zF_{i}+F_{i}z)(K'_{i})^{-1}.$$
Hence, $z \in Z(U_{v,t}).$
\end{proof}
\begin{lem}
Assume that $\Psi : U^{-}_{-\mu}\times U^{+}_{\nu}\rightarrow \mathbb{Q}(v,t)$ is  a linear map, and let $(\eta,\phi)\in Q\times Q $. Then there exists $u \in U^{-}_{-\upsilon}U^{0}U^{+}_{\mu}$ such that
$$\langle u|(yK'^{-1}_{\mu})K'_{\eta_{1}}K_{\mu_{1}}x\rangle=(K'_{\eta_{1}},K_{\phi})(K_{\eta}',K_{\phi_{1}})\Psi(y,x)$$
for all $x \in U^{+}_{\nu}\ ,\ y \in U^{-}_{-\mu}$ and $(\eta_{1},\phi_{1}) \in Q\times Q$.
\end{lem}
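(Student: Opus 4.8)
The plan is to build $u$ by hand as a combination of basis monomials of $U^{-}_{-\nu}U^{0}U^{+}_{\mu}$ whose Cartan part is fixed to be $K'_{-\nu}K'_{\eta}K_{\phi}$, choosing the coefficients so that they encode the values of $\Psi$ through the skew-Hopf pairing $(\cdot,\cdot)$. The guiding observation is that in the explicit formula defining $\langle\cdot|\cdot\rangle$ the two Cartan pairings separate from the ``$U^{\pm}$-pairings,'' so once the Cartan data is matched, the problem reduces to realizing the arbitrary bilinear form $\Psi$ as a finite sum of products of skew-Hopf pairings.

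First I would invoke the non-degeneracy of $(\cdot,\cdot)$ on each pair of (finite-dimensional, since the datum is of finite type) root spaces $U^{-}_{-\lambda}\times U^{+}_{\lambda}$, the same fact already used in the proof of Proposition 3.7. Using it, fix a basis $\{a_{s}\}$ of $U^{+}_{\nu}$ with dual basis $\{b_{s}\}$ of $U^{-}_{-\nu}$, so that $(b_{s},a_{s'})=\delta_{s,s'}$, and a basis $\{c_{t}\}$ of $U^{+}_{\mu}$ with dual basis $\{d_{t}\}$ of $U^{-}_{-\mu}$, so that $(d_{t},c_{t'})=\delta_{t,t'}$. Then every $y\in U^{-}_{-\mu}$ and $x\in U^{+}_{\nu}$ expand as $y=\sum_{t}(y,c_{t})d_{t}$ and $x=\sum_{s}(b_{s},x)a_{s}$, whence
$$\Psi(y,x)=\sum_{s,t}\Psi(d_{t},a_{s})\,(y,c_{t})(b_{s},x)$$
for all such $y,x$.

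I would then set
$$u=v^{-2\rho\cdot\nu}\sum_{s,t}\Psi(d_{t},a_{s})\,b_{s}K'_{-\nu}K'_{\eta}K_{\phi}c_{t}\ \in\ U^{-}_{-\nu}U^{0}U^{+}_{\mu},$$
and verify the asserted identity by a direct application of the defining formula for $\langle\cdot|\cdot\rangle$. Pairing the monomial $b_{s}K'_{-\nu}K'_{\eta}K_{\phi}c_{t}$ against $yK'^{-1}_{\mu}K'_{\eta_{1}}K_{\phi_{1}}x$ yields
$$(b_{s},x)(y,c_{t})(K'_{\eta},K_{\phi_{1}})(K'_{\eta_{1}},K_{\phi})v^{2\rho\cdot\nu};$$
summing against the chosen coefficients, the normalization $v^{-2\rho\cdot\nu}$ cancels the factor $v^{2\rho\cdot\nu}$, the two Cartan factors $(K'_{\eta_{1}},K_{\phi})(K'_{\eta},K_{\phi_{1}})$ pull out, and the residual sum collapses to $\Psi(y,x)$ by the expansion of the previous paragraph. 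This produces exactly the required value for all $x\in U^{+}_{\nu}$, $y\in U^{-}_{-\mu}$ and $(\eta_{1},\phi_{1})\in Q\times Q$.

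The conceptual heart of the argument, and the step I would check most carefully, is the claim that the products $(y,c_{t})(b_{s},x)$, as $s,t$ vary, span the whole space of bilinear forms on $U^{-}_{-\mu}\times U^{+}_{\nu}$; this is precisely the surjectivity furnished by the non-degeneracy of the skew-Hopf pairing on each root space together with the finiteness of $\dim U^{\pm}_{\lambda}$ in finite type. Everything else, namely matching the fixed Cartan weight $(\eta,\phi)$ and tracking the scalar $v^{2\rho\cdot\nu}$, is bookkeeping; and that the monomials occurring in $u$ genuinely lie in $U^{-}_{-\nu}U^{0}U^{+}_{\mu}$ is immediate from the triangular decomposition.
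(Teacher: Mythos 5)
Your construction is exactly the one in the paper: both proofs choose dual bases of $U^{+}_{\nu},U^{-}_{-\nu}$ and $U^{+}_{\mu},U^{-}_{-\mu}$ with respect to the skew-Hopf pairing and set $u=v^{-2\rho\cdot\nu}\sum_{s,t}\Psi(d_{t},a_{s})\,b_{s}K'^{-1}_{\nu}K'_{\eta}K_{\phi}c_{t}$, so that the factor $v^{2\rho\cdot\nu}$ cancels and the Cartan pairings separate. The only difference is that you actually carry out the verification (via the dual-basis expansions of $x$ and $y$) that the paper dismisses as straightforward; the argument is correct.
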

\begin{proof}
For each $\mu \in Q^{+}$, we choose a basis $u^{\mu}_{1},u^{\mu}_{2},\ldots,u^{\mu}_{d_{\mu}}$ ($d_{\mu}=\dim(U^{+}_{\mu})$) of $U^{+}_{\mu}$ and a dual basis  $v^{\mu}_{1},v^{\mu}_{2},\ldots,v^{\mu}_{d_{\mu}}$ of $U^{-}_{-\mu}$ such that $(v^{\mu}_{i},u^{\mu}_{j})=\delta_{i,j}$. If we set
$$u=\sum_{(i,j)}\Psi(v^{\mu}_{j},u^{\nu}_{i})v^{\nu}_{i}(K'_{\nu})^{-1}K'_{\eta}K_{\phi}u^{\mu}_{j}v^{-2\rho\cdot \nu},$$
then it is straightforward to verify that $u$ satisfies the requirement.
\end{proof}
We define a $U_{v,t}$-module structure on the dual space $U^{*}_{v,t}$ by setting $(x.f)(v)=f(S(x)v)$, for any $u,v \in U_{v,t}, f \in U_{v,t}^{\ast}$. We also define a map $\beta: U_{v,t} \rightarrow U_{v,t}^{\ast}$ by setting $$\beta(u)(v)=\langle u | v\rangle \ \ \forall u,v \in U_{v,t}.$$

Considering that $U_{v,t}$-module structure on $U_{v,t}$ is given by the adjoint representation. We get $\beta$ is an injective $U_{v,t}$-module homomorphism because of the ad-invariance and the non-degeneracy of the bilinear form $\langle ,\rangle$.

\begin{dfn} Let $M$ be a finite dimensional module of $U_{v,t}$. For each $m \in M$ and $f \in M^{*}$, we define an element $C_{f,m} \in U_{v,t}^{*}$ by $C_{f,m}(v)$ =$f(v.m)$, for any $v \in U_{v,t}$.
\end{dfn}

\begin{prop}
Assume that $M$ is a finite $U_{v,t}$-module, $M=\bigoplus_{\lambda \in wt(M)}M_{\lambda}$. Suppose that the weight set $wt(M)$ of $M$ satisfies:\ $wt(M) \in Q$. For each $f \in M^{*}$,\ $m \in M$, there exist a unique $u \in  U_{v,t}$ such that

\ \ \ \ \ \ \ \ \ \ \ \ \ \ \ \ \ \ \ \ \ \ \ \ \ \ \ \ \ \ \ \ \ \ $C_{f,m}(v)=\langle u|v\rangle$ \ \ \ \ \ for all $v \in U_{v,t}$.
\end{prop}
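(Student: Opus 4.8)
The plan is to realize $C_{f,m}$ as $\langle u\,|\,\cdot\rangle$ by decomposing it along the triangular (weight) grading of $U_{v,t}$ and representing each homogeneous piece by means of the construction lemma proved just above (the one producing an element of $U^{-}_{-\nu}U^{0}U^{+}_{\mu}$ out of a bilinear form $\Psi$ on $U^{-}_{-\mu}\times U^{+}_{\nu}$). \emph{Uniqueness} is immediate and can be dispatched first: if $\langle u\,|\,v\rangle=\langle u'\,|\,v\rangle$ for all $v$, then the non-degeneracy of $\langle\cdot\,|\,\cdot\rangle$ forces $u=u'$. So it suffices to prove existence. First I would reduce to the homogeneous case: since $M$ is finite dimensional, $M=\bigoplus_\lambda M_\lambda$ and $M^{*}=\bigoplus_\lambda M_\lambda^{*}$ are finite direct sums, where $M_\lambda^{*}$ denotes the functionals vanishing off $M_\lambda$. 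Writing $m=\sum_\lambda m_\lambda$ and $f=\sum_{\lambda'}f_{\lambda'}$ and using that $(f,m)\mapsto C_{f,m}$ and $\langle\cdot\,|\,\cdot\rangle$ are bilinear, the claim reduces to $m\in M_\lambda$ and $f\in M_{\lambda'}^{*}$ for fixed $\lambda,\lambda'\in \mathrm{wt}(M)\subseteq Q$.

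Next I would locate the support of $C_{f,m}$ in the decomposition $U_{v,t}=\bigoplus_{\mu,\nu\in Q^{+}}U^{-}_{-\nu}U^{0}U^{+}_{\mu}$. For $v\in U^{-}_{-a}U^{0}U^{+}_{b}$ one has $v.m\in M_{\lambda+b-a}$, and for $x\in U^{+}_{b}$ the vector $x.m\in M_{\lambda+b}$ is nonzero only if $\lambda+b\in \mathrm{wt}(M)$; moreover $f(v.m)\neq0$ forces $\lambda+b-a=\lambda'$. Hence $C_{f,m}$ vanishes on $U^{-}_{-a}U^{0}U^{+}_{b}$ except when $b\in(\mathrm{wt}(M)-\lambda)\cap Q^{+}$ and $a=b-(\lambda'-\lambda)\in Q^{+}$, which is a \emph{finite} set of pairs $(a,b)$ because $\mathrm{wt}(M)$ is finite.

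The core step is to represent each surviving homogeneous piece. Fix such an $(a,b)$ and set $\kappa=\lambda+b$. Evaluating on a test element $\big(y(K'_a)^{-1}\big)K'_{\eta_1}K_{\phi_1}x$ with $y\in U^{-}_{-a}$ and $x\in U^{+}_{b}$, the factor $x$ sends $m$ into $M_\kappa$, the Cartan part acts on $M_\kappa$ by a scalar $\chi_\kappa(\eta_1,\phi_1)$ depending only on $\kappa$ and $(\eta_1,\phi_1)$, and $y$ lowers the weight back to $\lambda'$, where $f$ is evaluated; this gives $C_{f,m}\big((y(K'_a)^{-1})K'_{\eta_1}K_{\phi_1}x\big)=\chi_\kappa(\eta_1,\phi_1)\,\Psi(y,x)$ with $\Psi(y,x):=f\big(y.((K'_a)^{-1}x.m)\big)$ bilinear on $U^{-}_{-a}\times U^{+}_{b}$. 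The hard part will be to verify that this weight character $\chi_\kappa$ coincides with $(K'_{\eta_1},K_\phi)(K'_\eta,K_{\phi_1})$ for a suitable $(\eta,\phi)\in Q\times Q$ determined by $\kappa$ (note $\kappa\in Q$): this amounts to matching the $v$- and $t$-twists coming from the action of $K'_{\eta_1}$ and $K_{\phi_1}$ on $M_\kappa$ against the skew-Hopf pairing, and is the one genuinely computational point, pinning down $(\eta,\phi)$ in terms of $\kappa$. Granting this identification, the construction lemma (applied with $\mu=a$, $\nu=b$ and with this $\Psi$ and $(\eta,\phi)$) produces $u_{(a,b)}\in U^{-}_{-b}U^{0}U^{+}_{a}$ with $\langle u_{(a,b)}\,|\,v\rangle=C_{f,m}(v)$ for every $v\in U^{-}_{-a}U^{0}U^{+}_{b}$, the identity extending from test elements to the whole piece by linearity.

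Finally I would assemble $u=\sum_{(a,b)}u_{(a,b)}$ over the finitely many surviving pairs, so that $u\in U_{v,t}$. By the orthogonality relation $\langle U^{-}_{-\nu}U^{0}U^{+}_{\mu}\,|\,U^{-}_{-\nu_1}U^{0}U^{+}_{\mu_1}\rangle=0$ unless $\nu=\mu_1$ and $\mu=\nu_1$, the summand $u_{(a,b)}\in U^{-}_{-b}U^{0}U^{+}_{a}$ pairs to zero against every graded piece other than $U^{-}_{-a}U^{0}U^{+}_{b}$; combined with the vanishing of $C_{f,m}$ off the surviving pieces, this yields $\langle u\,|\,v\rangle=C_{f,m}(v)$ for all $v\in U_{v,t}$, which together with the uniqueness noted at the outset completes the proof.
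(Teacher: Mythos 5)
Your proposal is correct and follows essentially the same route as the paper: reduce to weight vectors, observe that the Cartan part acts on $M_{\nu+\lambda}$ by a character, invoke the construction lemma (Lemma 4.6) to realize each homogeneous piece, sum over the finitely many contributing pairs using the orthogonality of the graded pieces, and get uniqueness from non-degeneracy. The one step you defer --- matching the weight character on $M_{\kappa}$ with the pairing values $(K'_{\eta_1},K_{\phi})(K'_{\eta},K_{\phi_1})$ --- is exactly what the paper supplies via the identities $\varrho^{\nu+\lambda}(K'_{\eta_{1}})=(K'_{\eta_{1}},K_{-\nu-\lambda})$ and $\varrho^{\nu+\lambda}(K_{\phi_{1}})=(K'_{\nu+\lambda},K_{\phi_{1}})$, valid because $\nu+\lambda\in Q$; so $(\eta,\phi)=(\nu+\lambda,-\nu-\lambda)$ and your plan closes as intended.
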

\begin{proof}
Let $m \in M_{\lambda}$, $v=(yK'^{-1}_{\mu})K'_{\eta_{1}}K_{\phi_{1}}x$, where $x \in U^{+}_{\nu}$,\ $y \in U^{-}_{-\mu}$,\ $(\eta_{1},\phi_{1}) \in Q\times Q$. Then for each $f \in M^{*}$, we have

\begin{equation*}
\begin{aligned}
C_{f,m}(v)&=C_{f,m}((yK'^{-1}_{\mu})K'_{\eta_{1}}K_{\phi_{1}}x)=f((yK'^{-1}_{\mu})K'_{\eta_{1}}K_{\phi_{1}}x.m)\\
&=\varrho^{\nu+\lambda}(K'_{\eta_{1}}K_{\phi_{1}})f((yK'^{-1}_{\mu})x.m).\\
\end{aligned}
\end{equation*}

Note that $(y,x)\mapsto f(yx.m)$ is bilinear, and $\varrho^{\nu+\lambda}(K'_{\eta_{1}})=(K'_{\eta_{1}},K_{-\nu-\lambda})$, $\varrho^{\nu+\lambda}(K_{\phi_{1}})=(K'_{\nu+\lambda},K_{\phi_{1}})$. By \textrm{lemma 4.6}, there exist a unique $u_{\nu\mu}$ such that $C_{f,m}(v)=\langle u_{\nu\mu},v\rangle$ for all $v \in U^{-}_{-\mu}U^{0}U^{+}_{\nu}$.
Now for arbitrary $v \in U_{v,t}$, we write $v=\sum_{\mu,\nu}v_{\mu\nu}$ where $v_{\mu\nu} \in U^{-}_{-\mu}U^{0}U^{+}_{\nu}$. Since $M$ is finite-dimensional, there exists a finite set $\Upsilon$ of pair $Q \times Q$, such that
$$C_{f,m}(v)=C_{f,m}(\sum_{(\mu,\nu)\in \Upsilon}v_{\mu\nu}).$$

Setting $u=\Sigma_{\mu,\nu}u_{\mu\nu}$, we have

$$C_{f,m}(v)=C_{f,m}(\sum_{(\mu,\nu)\in \Upsilon}v_{\mu\nu})=\sum_{(\mu,\nu)\in \Upsilon}C_{f,m}(v_{\mu\nu}) $$
$$=\sum_{(\mu,\nu)\in \Upsilon}\langle u_{\mu\nu}|v_{\mu\nu}\rangle=\langle u|v\rangle.$$

The uniqueness follows from \textrm{proposition 4.7}.
\end{proof}

Assume $M$ is a finite dimentional $U_{v,t}$-module, and we define a linear map $\Theta$ on $M$ by $$\Theta(m_{\lambda})=v^{-2\rho \cdot \lambda}m_{\lambda},$$ for all $m \in M_{\lambda}, \lambda \in \Lambda$.

\begin{lem}
 $\Theta u = S^{2}(u) \Theta$, for all $u \in U_{v,t}.$
\end{lem}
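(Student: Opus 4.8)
The plan is to observe that the asserted identity, read as an equality of operators on $M$, is equivalent to the equality of two algebra homomorphisms $U_{v,t}\to \End(M)$, and hence reduces to the generators. Write $\pi_M$ for the action of $U_{v,t}$ on $M$. Since $\Theta$ is invertible on each weight space, with $\Theta^{-1}(m_\lambda)=v^{2\rho\cdot\lambda}m_\lambda$, conjugation by $\Theta$ is an automorphism of $\End(M)$, so $u\mapsto \Theta\,\pi_M(u)\,\Theta^{-1}$ is an algebra homomorphism; and $S^2$ is an algebra homomorphism of $U_{v,t}$ (the composite of two anti-homomorphisms), so $u\mapsto \pi_M(S^2(u))$ is one as well. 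Therefore $\Theta u=S^2(u)\Theta$ need only be checked on the generators $E_i,F_i,K_i^{\pm 1},K_i'^{\pm 1}$.

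Next I would compute $S^2$ on the generators from the antipode formulas. Using that $S$ is an anti-automorphism together with $S(E_i)=-K_i^{-1}E_i$ and $S(K_i^{\pm 1})=K_i^{\mp 1}$, one obtains $S^2(E_i)=K_i^{-1}E_iK_i$; likewise $S(F_i)=-F_iK_i'^{-1}$ gives $S^2(F_i)=K_i'F_iK_i'^{-1}$, while $S^2(K_i^{\pm 1})=K_i^{\pm 1}$ and $S^2(K_i'^{\pm 1})=K_i'^{\pm 1}$. Applying relation $(R2)$ with $j=i$, where the $t$-exponent $\langle i,i\rangle-\langle i,i\rangle$ vanishes, collapses these to scalars: $S^2(E_i)=v^{-i\cdot i}E_i$ and $S^2(F_i)=v^{i\cdot i}F_i$.

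Then I would verify the operator identity on a weight vector $m_\lambda\in M_\lambda$. As $E_i$ raises weight by $i$ and $F_i$ lowers it by $i$, we have $\Theta E_i m_\lambda=v^{-2\rho\cdot(\lambda+i)}E_i m_\lambda$ and $S^2(E_i)\Theta m_\lambda=v^{-2\rho\cdot\lambda}v^{-i\cdot i}E_i m_\lambda$, which agree exactly when $2\rho\cdot i=i\cdot i$; the $F_i$ computation reduces to the same identity. For the toral generators both sides act diagonally and commute with $\Theta$, so the relation is immediate.

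The only genuine input, and hence the step to flag, is the numerical identity $2\rho\cdot i=i\cdot i$ for every $i\in I$. Under the standing symmetric-type hypothesis ($\Omega_{ii}=1$, so $i\cdot i=2$) this is precisely the defining property $\rho\cdot\alpha_i=1$, i.e. $\langle\rho,\alpha_i^\vee\rangle=1$, of the half-sum of positive roots. Everything else is routine bookkeeping; in particular no $t$-powers survive, which is consistent with $\Theta$ being built only out of $v$.
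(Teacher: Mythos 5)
Your proof is correct and takes essentially the same route as the paper's: reduce the identity to the generators $E_i,F_i,K_i^{\pm1},K_i'^{\pm1}$ (you additionally justify why this reduction is legitimate, which the paper merely asserts) and verify it on weight vectors, the only real input being $2\rho\cdot i=i\cdot i$. In fact your value $S^{2}(E_i)=v^{-i\cdot i}E_i$ corrects a sign that appears as $v^{i\cdot i}$ in the paper's intermediate step; the end result is the same.
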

\begin{proof}
 We have only to check it holds for generators $E_{i}, F_{i}, K_{i}$ and $K_{i}^{'}$. Also we fix one $m_{\lambda} \in M_{\lambda}:$

 (1)When $u=K_{i}$. $S^{2}(K_{i})\Theta(m_{\lambda})=K_{i}\Theta(m_{\lambda})=\Theta K_{i}(m_{\lambda}).$

 (2)When $u=K'_{i}$. $S^{2}(K'_{i})\Theta(m_{\lambda})=K'_{i}\Theta(m_{\lambda})=\Theta K'_{i}(m_{\lambda}).$

 (3)When $u=E_{i}$.
 $S^{2}(E_{i})\Theta(m_{\lambda})=K^{-1}_{i}E_{i}K_{i}\Theta(m_{\lambda})$
 =$v^{i\cdot i}E_{i}\Theta(m_{\lambda})$
 =$v^{-2\rho \cdot (\lambda+i)}E_{i}m_{\lambda}$
 =$\Theta(E_{i}.m_{\lambda}).$

 (4)When $u=F_{i}$.
 $S^{2}(F_{i})\Theta(m_{\lambda})=K'_{i}F_{i}K'^{-1}_{i}\Theta(m_{\lambda})$
 =$v^{i \cdot i}F_{i}\Theta(m_{\lambda})$
 =$v^{-2\rho \cdot (\lambda-i)}F_{i}m_{\lambda}$
 =$\Theta(F_{i}.m_{\lambda}).$

\end{proof}

For $\lambda\in \Lambda$, define $f_{\lambda} \in U_{v,t}^{*}$  by $f_{\lambda}(u)=tr_{L( \lambda)}(u\Theta)$ for each $u \in U_{v,t}$.

\begin{lem} If $\lambda \in \Lambda^{+}\cap Q$, then $f_{\lambda} \in \text{Im}(\beta)$.
\end{lem}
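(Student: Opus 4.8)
The plan is to realize the trace functional $f_\lambda$ as a finite linear combination of the matrix coefficients $C_{f,m}$ and then to invoke the previous proposition, which shows that every such $C_{f,m}$ lies in $\mathrm{Im}(\beta)$. The only hypothesis that must be checked before applying that proposition is that the module in play is finite dimensional with all of its weights in $Q$.

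First I would record that, since $\lambda \in \Lambda^{+}$, the simple module $L(\lambda)$ is finite dimensional, and since $\lambda \in Q$, every weight of $L(\lambda)$ lies in $\lambda + Q = Q$; thus $wt(L(\lambda)) \subseteq Q$, so the hypotheses needed to express matrix coefficients through $\beta$ are satisfied. Next, fix a basis $\{m_i\}$ of $L(\lambda)$ consisting of weight vectors, say $m_i$ of weight $\lambda_i$, and let $\{f_i\}$ be the dual basis of $L(\lambda)^{*}$. Since $\Theta$ acts on the weight space of weight $\lambda_i$ by the scalar $v^{-2\rho\cdot\lambda_i}$, computing the trace in this basis gives
$$f_\lambda(u) = \mathrm{tr}_{L(\lambda)}(u\Theta) = \sum_i f_i\bigl(u\,\Theta m_i\bigr) = \sum_i v^{-2\rho\cdot\lambda_i}\, C_{f_i,m_i}(u)$$
for all $u \in U_{v,t}$; that is, $f_\lambda = \sum_i v^{-2\rho\cdot\lambda_i} C_{f_i,m_i}$ as elements of $U_{v,t}^{*}$, a finite sum because $L(\lambda)$ is finite dimensional.

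Finally, by the previous proposition each $C_{f_i,m_i}$ belongs to $\mathrm{Im}(\beta)$, and since $\beta$ is linear, $\mathrm{Im}(\beta)$ is a $\mathbb{Q}(v,t)$-subspace of $U_{v,t}^{*}$; hence the finite combination $f_\lambda$ also lies in $\mathrm{Im}(\beta)$. The argument is essentially formal once the decomposition of the trace is in place, so the main (and only genuine) point to verify carefully is the containment $wt(L(\lambda)) \subseteq Q$, which is exactly what licenses applying the matrix-coefficient proposition; the rest is the routine bookkeeping of writing a trace as a sum of matrix coefficients.
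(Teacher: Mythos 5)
Your proof is correct and follows essentially the same route as the paper: expand the trace $\mathrm{tr}_{L(\lambda)}(u\Theta)$ over a basis of $L(\lambda)$ as a finite sum of matrix coefficients and apply the preceding proposition on $C_{f,m}$; the only cosmetic difference is that you use a weight basis to pull $\Theta$ out as the scalar $v^{-2\rho\cdot\lambda_i}$, whereas the paper keeps it inside as $C_{f_i,\Theta m_i}$. Your explicit check that $wt(L(\lambda))\subseteq Q$ is a welcome addition, as the paper leaves that hypothesis implicit.
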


\begin{proof} We have only to find an element $u \in U_{v,t}$ such that $\langle u |v \rangle$=$f_{\lambda}(v)$, for any $v \in U_{v,t}$. Let $\lambda \in \Lambda^{+} \cap Q$ and $k$=dim$(L_{\lambda})$. Denote by $\{m_{i}\}_{i=1,2,...,k}$ a basis of $L_{\lambda}$ and $\{f_{i}\}_{i=1,2,...,k}$  the dual basis of $L_{\lambda}^{*}$. Then by definition we have $$f_{\lambda}(v)=tr_{L(\lambda)}(v \Theta).$$

Since $\Theta$ is invertible, $\{\Theta m_{i}\}_{i=1,2,...,k}$ form a basis of $L_{\lambda}$
$$tr_{L(\lambda)}(v \Theta)=\sum_{i=1}^{k}f_{i}(v \Theta m_{i})=\sum_{i=1}^{k}C_{f_{i},\Theta m_{i}}(v)= \sum_{i=1}^{k}\langle u_{i}| v \rangle = \langle \sum_{i=1}^{k}u_{i} |v \rangle.$$
Set $u=\sum_{i=1}^{k}u_{i}$, $u$ is the element we want.
\end{proof}
\begin{thm} For $\lambda \in \Lambda^{+} \cap Q$, $z_{\lambda} = \beta^{-1}( f_{\lambda}) \in Z(U_{v,t})$.
\end{thm}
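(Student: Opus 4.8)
The plan is to deduce the centrality of $z_\lambda$ from the characterization of the centre established earlier: by that criterion, $z_\lambda\in Z(U_{v,t})$ if and only if $ad(x)z_\lambda=\varepsilon(x)z_\lambda$ for every $x\in U_{v,t}$. Since the bilinear form $\langle\cdot|\cdot\rangle$ is non-degenerate and $\beta(z_\lambda)=f_\lambda$, this identity is equivalent to
\[
\langle ad(x)z_\lambda\mid v\rangle=\varepsilon(x)\,f_\lambda(v)\qquad\text{for all }v\in U_{v,t}.
\]
First I would push the adjoint action onto the second argument using the ad-invariance of $\langle\cdot|\cdot\rangle$, obtaining $\langle ad(x)z_\lambda\mid v\rangle=\langle z_\lambda\mid ad(S(x))v\rangle=f_\lambda(ad(S(x))v)$. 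Thus the whole statement reduces to proving the single invariance property
\[
f_\lambda(ad(y)v)=\varepsilon(y)\,f_\lambda(v)\qquad\text{for all }y,v\in U_{v,t},
\]
after which one specializes $y=S(x)$ and uses $\varepsilon\circ S=\varepsilon$.

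The engine of the argument is a quantum-trace identity for $f_\lambda$. Recall that $f_\lambda(u)=tr_{L(\lambda)}(u\Theta)$ and that $\Theta$ implements $S^2$, that is $\Theta a=S^2(a)\Theta$. Because $L(\lambda)$ is finite dimensional (as $\lambda\in\Lambda^+$), the trace is cyclic, and I would establish
\[
f_\lambda(ab)=tr_{L(\lambda)}(ab\Theta)=tr_{L(\lambda)}(b\Theta a)=tr_{L(\lambda)}(bS^2(a)\Theta)=f_\lambda(bS^2(a))
\]
for all $a,b\in U_{v,t}$.

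With this in hand the invariance property follows from the Hopf axioms. Writing $\Delta(y)=\sum_{(y)}y_{(1)}\otimes y_{(2)}$ and applying the trace identity with $a=y_{(1)}$ and $b=vS(y_{(2)})$, I would compute
\[
f_\lambda(ad(y)v)=\sum_{(y)}f_\lambda\big(y_{(1)}\,v\,S(y_{(2)})\big)=\sum_{(y)}f_\lambda\big(v\,S(y_{(2)})S^2(y_{(1)})\big)=f_\lambda\Big(v\cdot S\big(\textstyle\sum_{(y)}S(y_{(1)})y_{(2)}\big)\Big),
\]
where the last step uses that $S$ is an anti-homomorphism. The antipode axiom gives $\sum_{(y)}S(y_{(1)})y_{(2)}=\varepsilon(y)1$, so the bracket collapses to $\varepsilon(y)1$ and $f_\lambda(ad(y)v)=\varepsilon(y)f_\lambda(v)$, which is exactly what was needed.

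The main obstacle is establishing the quantum-trace identity cleanly and checking that its hypotheses are met: cyclicity of $tr_{L(\lambda)}$ requires $L(\lambda)$ to be finite dimensional, which is precisely why $\lambda$ is taken dominant, and the condition $\lambda\in\Lambda^+\cap Q$ is what guarantees $wt(L(\lambda))\subseteq Q$, so that $f_\lambda$ lies in the image of $\beta$ and $z_\lambda=\beta^{-1}(f_\lambda)$ is even defined. The genuinely delicate point is the bookkeeping that ties together the three ingredients — the ad-invariance of $\langle\cdot|\cdot\rangle$, the relation $\Theta a=S^2(a)\Theta$, and the antipode identity $\sum S(y_{(1)})y_{(2)}=\varepsilon(y)1$ — so that the $S^2$ produced by the quantum trace cancels exactly against the $S$ coming from the adjoint action; matching these powers of the antipode is where care is needed.
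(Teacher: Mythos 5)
Your proposal is correct and follows essentially the same route as the paper: the key step in both is the quantum-trace computation combining cyclicity of $tr_{L(\lambda)}$, the relation $\Theta a = S^{2}(a)\Theta$ from Lemma 4.9, and the antipode axiom $\sum S(y_{(1)})y_{(2)}=\varepsilon(y)1$ to show $f_{\lambda}(ad(y)v)=\varepsilon(y)f_{\lambda}(v)$, after which the criterion of Lemma 4.5 gives centrality. The only cosmetic difference is that you transfer the statement through the non-degeneracy and ad-invariance of $\langle\cdot|\cdot\rangle$ directly, while the paper phrases the same transfer as injectivity of the module map $\beta$.
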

\begin{proof} For all $x \in U_{v,t}$, we have

     \begin{equation*}
\begin{aligned}
(S^{-1}(x)f_{\lambda})(u)&=f_{\lambda}(ad(x)u)  \\
     &=tr_{L(\lambda)}(\sum_{(x)}x_{(1)}uS(x_{(2)})\Theta)\\
     &=tr_{L(\lambda)}(u\sum_{(x)}S(x_{2})\Theta x_{(1)})\\
     &=tr_{L(\lambda)}(u\sum_{(x)}S(x_{2})S^{2}(x_{(1)})\Theta)\\
     &=tr_{L(\lambda)}(uS(\sum_{(x)}S(x_{(1)})x_{(2)})\Theta)\\
     &=(\iota \circ \epsilon)(x)tr_{L(\lambda)}(u \Theta)\\
     &=(\iota \circ \epsilon)(x)f_{\lambda}(u).
\end{aligned}
\end{equation*}

     Substituting $x$ for $S^{-1}(x)$ in the above and noticing that $\varepsilon \circ S = \varepsilon $, we can conclude $$x.f_{\lambda}=(\iota \circ \epsilon)(x)f_{\lambda}.$$

     Now let
     $$x f_{\lambda}=x\beta(\beta^{-1}(f_{\lambda}))=\beta(ad(S(x))\beta^{-1}(f_{\lambda})),$$
     $$(\iota \circ \epsilon )(x)f_{\lambda}=(\iota \circ \epsilon )(x)\beta (\beta^{-1}(f_{\lambda}))=\beta ((\iota \circ \epsilon)(x) \beta^{-1}f_{\lambda}).$$

Since $\beta$ is an injective map, for all $x \in U_{v,t}$
      $$ad(x)(\beta^{-1}(f_{\lambda}))= (\iota \circ \epsilon )(x)\beta^{-1}(f_{\lambda}), $$
      then we have $\beta^{-1}(f_{\lambda}) \in Z(U_{v,t})$.
\end{proof}

      Recall that $U_{\flat}^{0}=\bigoplus_{\eta \in Q}\mathbb{Q}(v,t)K'_{\eta}K_{-\eta}$ and define $(U_{\flat}^{0})^{W}=\{u \in U_{\flat}^{0}|\sigma(u)=u, \forall \sigma \in W\}$

\begin{thm} For $U_{v,t}$, $(U_{\flat}^{0})^{W} \subseteq \xi(Z(U_{v,t}))$.
\end{thm}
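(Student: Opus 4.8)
The plan is to exhibit enough central elements whose Harish--Chandra images are explicit $W$-invariant elements of $U_\flat^0$, and then to show by a triangularity argument that these images already span $(U_\flat^0)^W$. The central elements I would use are the $z_\lambda=\beta^{-1}(f_\lambda)\in Z(U_{v,t})$ of Theorem 4.11, one for each $\lambda\in\Lambda^+\cap Q$.

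First I would compute $\xi(z_\lambda)=\gamma^{-\rho}(\pi(z_\lambda))$ explicitly. By the orthogonality of Lemma 3.4 the projection $\pi(z_\lambda)\in U^0$ is characterized by $\langle\pi(z_\lambda)\,|\,w\rangle=\langle z_\lambda\,|\,w\rangle=f_\lambda(w)=\mathrm{tr}_{L(\lambda)}(w\Theta)$ for all $w\in U^0$, and this determines it uniquely by the non-degeneracy of $\langle\cdot\,|\,\cdot\rangle$ on $U^0$. Taking $w=K'_{\eta_1}K_{\phi_1}$ and using that $K'_{\eta_1}K_{\phi_1}$ acts on $L(\lambda)_\mu$ by the scalar $\varrho^\mu(K'_{\eta_1}K_{\phi_1})$ (as in the proof of Proposition 4.8) while $\Theta$ acts by $v^{-2\rho\cdot\mu}$, I get
\[
\langle z_\lambda\,|\,K'_{\eta_1}K_{\phi_1}\rangle=\sum_{\mu\in wt(L(\lambda))}\dim L(\lambda)_\mu\,v^{-2\rho\cdot\mu}\,\varrho^\mu(K'_{\eta_1}K_{\phi_1}).
\]
The crucial identity is $\varrho^\mu(K'_{\eta_1}K_{\phi_1})=(K'_\mu,K_{\phi_1})(K'_{\eta_1},K_{-\mu})=\chi_{\mu,-\mu}(\eta_1,\phi_1)$, which is exactly $\langle K'_\mu K_{-\mu}\,|\,K'_{\eta_1}K_{\phi_1}\rangle$. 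Matching coefficients gives $\pi(z_\lambda)=\sum_{\mu}\dim L(\lambda)_\mu\,v^{-2\rho\cdot\mu}K'_\mu K_{-\mu}\in U_\flat^0$, and since $\gamma^{-\rho}(K'_\mu K_{-\mu})=v^{2\rho\cdot\mu}K'_\mu K_{-\mu}$ the two exponential factors cancel, leaving
\[
\xi(z_\lambda)=\sum_{\mu\in wt(L(\lambda))}\dim L(\lambda)_\mu\,K'_\mu K_{-\mu}.
\]

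Next I would verify $\xi(z_\lambda)\in(U_\flat^0)^W$ and that the $\xi(z_\lambda)$ span. Since $L(\lambda)$ is finite dimensional, its character is $W$-symmetric, $\dim L(\lambda)_\mu=\dim L(\lambda)_{\sigma\mu}$; as $\sigma(K'_\mu K_{-\mu})=K'_{\sigma\mu}K_{-\sigma\mu}$, the displayed formula is $W$-invariant. Writing $m_\nu=\sum_{\eta\in W\nu}K'_\eta K_{-\eta}$ for the orbit sums, the set $\{m_\nu:\nu\in\Lambda^+\cap Q\}$ is a $\mathbb{Q}(v,t)$-basis of $(U_\flat^0)^W$. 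Because the weights of $L(\lambda)$ lie in $\lambda-Q^+$ with $\dim L(\lambda)_\lambda=1$, grouping by orbits yields
\[
\xi(z_\lambda)=m_\lambda+\sum_{\substack{\nu\in\Lambda^+\cap Q\\ \nu<\lambda}}c_{\lambda\nu}\,m_\nu,\qquad c_{\lambda\nu}\in\mathbb{Z}_{\geq0},
\]
a unitriangular expression with respect to the dominance order. As only finitely many dominant weights lie below a given $\lambda$, I can solve recursively for each $m_\nu$ as a finite $\mathbb{Q}(v,t)$-combination of the $\xi(z_\lambda)$; since each $z_\lambda\in Z(U_{v,t})$ and $\xi(Z(U_{v,t}))$ is a $\mathbb{Q}(v,t)$-subspace, every $m_\nu$, hence all of $(U_\flat^0)^W$, lies in $\xi(Z(U_{v,t}))$.

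I expect the main obstacle to be the explicit evaluation of $\xi(z_\lambda)$: one has to track correctly the scalar by which the Cartan part acts on each weight space, recognize it as the character $\chi_{\mu,-\mu}$ dual to $K'_\mu K_{-\mu}$, and check that the twist $\gamma^{-\rho}$ is exactly what cancels the factor $v^{-2\rho\cdot\mu}$ coming from $\Theta$, so that the surviving coefficients are the weight multiplicities. The one external ingredient is the $W$-symmetry of $\dim L(\lambda)_\mu$, which in this two-parameter setting follows from the integrability provided by the intertwiners $M_{v,t}(\lambda-ni)\to M_{v,t}(\lambda)$ constructed in Section \ref{sec:twoparameter}; I would record this as a preliminary lemma if it is not already available.
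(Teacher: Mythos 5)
Your proposal is correct and follows essentially the same route as the paper: it uses the central elements $z_\lambda=\beta^{-1}(f_\lambda)$, computes $\xi(z_\lambda)=\sum_{\mu}\dim L(\lambda)_\mu\,K'_\mu K_{-\mu}$ by pairing against $K'_{\eta_1}K_{\phi_1}$ and matching characters $\chi_{\mu,-\mu}$, and then inverts the unitriangular system relating these images to the $W$-orbit sums (the paper uses orbit averages $av(\lambda)$ and induction on height, which is the same argument). Your added remark that the $W$-symmetry of the weight multiplicities should be justified via the intertwiners of Section 2 is a point the paper leaves implicit.
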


\begin{proof}  Set $z_{\lambda}=\beta^{-1}(f_{\lambda})$ for $\lambda \in \Lambda^{+}\cap Q$ and write$$z_{\lambda}=\sum_{\upsilon \geq 0}z_{\lambda,\upsilon} ,$$  and specially, $$z_{\lambda,0}=\sum_{\eta,\phi \in Q\times Q }\theta_{\eta,\phi}K'_{\eta}K_{\phi},$$where $z_{\lambda,\upsilon} \in U^{-}_{-\upsilon}U^{0}U^{+}_{\upsilon}$ and $\theta_{\eta,\phi} \in \mathbb{Q}(v,t)$. Now for $(\eta_{1},\phi_{1}) \in Q \times Q$, we have
$$\langle z_{\lambda} | K'_{\eta_{1}}K_{\phi_{1}}\rangle=\langle z_{\lambda,0} | K'_{\eta_{1}}K_{\phi_{1}}\rangle=\sum_{(\eta,\phi)}\theta_{\eta,\phi}(K'_{\eta_{1}},K_{\phi})(K'_{\eta},K_{\phi_{1}}).$$

On the other hand, from the definition of $z_{\lambda}$, we also have
\begin{equation*}
\begin{aligned}
\langle z_{\lambda}| K'_{\eta_{1}}K_{\phi_{1}}\rangle&=\beta(z_{\lambda})(K'_{\eta_{1}}K_{\phi_{1}})=f_{\lambda}(K'_{\eta_{1}}K_{\phi_{1}})=tr_{L(\lambda)}(K'_{\eta_{1}}K_{\phi_{1}}\Theta)\\
&=\sum_{\mu \leq \lambda}v^{-2\rho \cdot \mu}\dim(L(\lambda)_{\mu})\varrho^{\mu}(K'_{\eta_{1}}K_{\phi_{1}})\\
&=\sum_{\mu \leq \lambda}v^{-2\rho \cdot \mu}\dim(L(\lambda)_{\mu}) (K'_{\eta_{1}},K_{-\mu}) (K'_{\mu},K_{\phi_{1}}).\\
\end{aligned}
\end{equation*}

Now we get the following equation,

$$\sum_{(\eta,\phi)}\theta_{\eta,\phi}\chi_{\eta,\phi}=\sum_{\mu \leq \upsilon}v^{-2\rho \cdot \lambda}\dim(L(\lambda)_{\mu})\chi_{\mu,-\mu}.$$

Since these different characters are linearly independent, we have
(1)\ $\theta_{\eta,\phi}=\dim(L(\lambda))v^{-2\rho\cdot\lambda}m_{\lambda}$,  when $(\eta,\phi)=(\mu,-\mu).$
(2)\  $\theta_{\eta,\phi}=0$, otherwise.

So, we have

$$z_{\lambda,0}=\sum_{\mu \leq \lambda}v^{-2\rho \cdot \mu}m_{\lambda}\dim(L(\lambda)_{\mu})K'_{\mu}K_{-\mu}.$$

Then, we obtain
$$\xi(z_{\lambda})=\varrho^{-\rho}(z_{\lambda,0})=\sum_{\mu \leq \lambda}\dim(L(\lambda)_{\mu})K'_{\mu}K_{-\mu}.$$

For $\lambda \in \Lambda^{+}\cap Q$, we define

         $$av(\lambda)=\frac{1}{|W|}\sum_{\sigma \in W}\sigma(K'_{\lambda}K_{-\lambda})=\frac{1}{|W|}\sum_{\sigma \in W}K'_{\sigma(\lambda)}K_{-\sigma(\lambda)}.$$

For each $\eta\in Q$, there exists a unique $\sigma \in W$ such that $\sigma(\eta)\in \Lambda^{+}\cap Q$, so $\{av(\lambda)|\lambda \in \Lambda^{+}\cap Q\}$  forms a basis of $(U_{\flat}^{0})^{W}$. So we only have to show $av(\lambda) \in \text{Im}(\xi)$. By induction on the height of $\lambda$,  if $\lambda=0$, $av(\lambda)=1=\xi(1)$. Assume that $\lambda > 0$. Since $\dim L(\lambda)_{\mu}=\dim L(\lambda)_{\sigma(\mu)}$, for all $\sigma \in W$, and $\dim(L(\lambda)_{\lambda})=1$, we can obtain

  $$\xi(z_{\lambda})=|W|av(\lambda)+|W|\sum \dim(L(\lambda)_{\mu})av(\mu),$$
where the sum is over $\mu$  such that $\mu < \lambda$ and $\mu \in \Lambda^{+}\cap Q$. By the induction hypothesis, we get $av(\lambda) \in \text{Im}(\xi)$. So $(U^{0}_{\flat})^{W}\subseteq \xi(Z(U_{v,t}))$.

\end{proof}

\section{The image of the Harish-Chandra homomorphism of $U_{v,t}(g)$ }

 Let us recall the connection between the one-parameter quantum group and the two-parameter quantum group in \cite{FL}.

For any $\gamma=(\gamma_{1},\gamma_{2})$, $\eta=(\eta_{1},\eta_{2})$ $\in$ $\mathbb{Z}^{I} \times \mathbb{Z}^{I}$, we define a bilinear form on $\mathbb{Z}^{I} \times \mathbb{Z}^{I}$ by
$$[\gamma,\eta]'=[\gamma_{2},\eta_{2}]-[\gamma_{1},\eta_{1}].$$
The algebra $U_{v,t}$ admits a $\mathbb{Z}^{I} \times \mathbb{Z}^{I}$-grading  by defining the degrees of generators as follows,
 $$\text{deg}(E_{i})=(i,0), \text{deg}(K_{i})=(i,i)=\text{deg}(K'_{i})$$
 $$\text{deg}(F_{i})=(0,i), \text{deg}(K^{-1}_{i})=(-i,-i)=\text{deg}(K'^{-1}_{i}) .$$

 And in \cite{FL}, the authors define a new multiplication $``\ast"$ on $U_{v,t}$ by  
 $$x \ast y=t^{-[|x|,|y|]'}xy,$$
 for any homogeneous elements $x,y \in U_{v,t}$. Since $[,]'$ is a bilinear form, $(U_{v,t},\ast)$ is an associative algebra over $\mathbb{Q}(v,t)$. Define the multiplication, denoted by $\ast$, on $U_{v,t}\otimes U_{v,t}$ by
 $$(x\otimes y)\ast(x'\otimes y')=x\ast x'\otimes y\ast y'.$$
 This gives a algebra structure on $U_{v,t}\otimes U_{v,t}$. $(U_{v,t},\ast)$ has a Hopf algebra structure with the comultiplication $\bigtriangleup^{\ast}$, the counit $\varepsilon^{\ast}$ and the  antipode $S^{\ast}$. The image of generators $E_{i}$,$F_{i}$,$K_{i}$ and $K'_{i}$ under the map $\bigtriangleup^{\ast}$  (resp.  $\varepsilon^{\ast}$  and $S^{\ast}$) are the same as
the ones under the map $\bigtriangleup$  (resp.  $\varepsilon$  and $S$).

\begin{thm}   \cite[Theorem 4]{FL} If $(I ,\cdot )$ is the Cartan datum associated to $\Omega$, then there is a Hopf-algebra isomorphism
                                 $$(U_{v,t},\ast,\bigtriangleup^{\ast},\varepsilon^{\ast},S^{\ast})\simeq(U_{v,t}(I, \cdot),\cdot,\triangle_{1},\varepsilon_{1},S_{1})$$
sending the generators in $U_{v,t}$ to the respective generators in $U_{v,t}(I, \cdot)$, where $\mathbb{Q}(v,t)\otimes U_{v}(I,\cdot)\cong(U_{v,t}(I,\cdot), \ast)$, and $U_{v}(I,\cdot)$ is the one-parameter case with Cartan datum $(I ,\cdot )$.

\end{thm}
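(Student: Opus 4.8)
The statement is a cocycle-twist result, and the plan is to exhibit the isomorphism on generators and then force it to be bijective by a basis count. Write $\sigma(x,y)=t^{-[|x|,|y|]'}$ for homogeneous $x,y\in U_{v,t}$, where $|\cdot|$ denotes the $\mathbb{Z}^{I}\times\mathbb{Z}^{I}$-degree, so that $x\ast y=\sigma(x,y)\,xy$. Because $[\,\cdot\,,\,\cdot\,]'$ is bilinear, $\sigma$ is a bicharacter on the grading group; this is exactly the associativity already noted for $\ast$, and it is also the cocycle condition needed to twist the full Hopf structure coherently. First I would record the degrees $\deg(E_i)=(i,0)$, $\deg(F_i)=(0,i)$ and $\deg(K_i^{\pm1})=\deg(K_i'^{\pm1})=(\pm i,\pm i)$, and define $\Phi$ to send each generator of $(U_{v,t},\ast)$ to the respective generator of the target $U_{v,t}(I,\cdot)$, extended as a homomorphism for $\ast$ on the source and for the ordinary product on the target.

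The substantive step is to check that $\Phi$ is well defined, i.e.\ that the $\ast$-images of the generators satisfy the relations presenting $U_{v,t}(I,\cdot)$. Relation (R1) survives verbatim, since it involves only group-likes of opposite degree, on which the two twist factors cancel. For (R2) and (R3) one computes the $\ast$-conjugate $K_i\ast E_j\ast K_i^{-1}$ (resp.\ $E_i\ast F_j-F_j\ast E_i$) directly: the powers of $t$ that $\sigma$ produces on the relevant pairs of degrees combine with the $t^{\langle j,i\rangle-\langle i,j\rangle}$ already present to give exactly the target structure constants, and the diagonal term $(K_i-K_i')/(v_i-v_i^{-1})$ is untouched for degree reasons. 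The heart of the matter is the quantum Serre relation (R4). Here two effects must be tracked at once: the bicharacter factors accumulated when a length-$(1-2\tfrac{i\cdot j}{i\cdot i})$ word is rewritten as a $\ast$-product, and the rescaling of the divided powers. For the latter one uses the clean identity $[n]^!_{v,t}=t^{\binom{n}{2}}[n]^!_{v}$, where $[n]_{v}=(v^{n}-v^{-n})/(v-v^{-1})$, which isolates all the $t$-dependence of $\theta_i^{(n)}$ into a single power of $t$. Expressing $E_i^{(p)}$ through this identity and computing $E_i^{(p)}\ast E_j\ast E_i^{(p')}$ via the bicharacter, the accumulated exponent of $t$ depends on $(p,p')$ in precisely the affine-linear way needed to turn the coefficient $t_i^{-p(p'-2\frac{\langle i,j\rangle}{i\cdot i}+2\frac{\langle j,i\rangle}{i\cdot i})}$ of (R4) into the structure constant of the target Serre relation. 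I expect this $t$-exponent bookkeeping to be the main obstacle.

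With the relations verified, $\Phi$ is a homomorphism of associative algebras, and surjectivity is immediate because generators map to generators. For injectivity I would use the triangular decomposition $U_{v,t}=U^-_{v,t}\otimes U^0\otimes U^+_{v,t}$ together with a PBW basis: the twist $\ast$ only rescales each PBW monomial by a nonzero scalar, so it sends a PBW basis of $U_{v,t}$ to a basis of $(U_{v,t},\ast)$ whose $\Phi$-image is a PBW basis of $U_{v,t}(I,\cdot)$, and matching graded dimensions then forces $\Phi$ to be bijective. For the Hopf-algebra compatibility I would invoke that $\bigtriangleup^{\ast},\varepsilon^{\ast},S^{\ast}$ agree with $\bigtriangleup,\varepsilon,S$ on generators and that $\bigtriangleup^{\ast}$ is an algebra map $(U_{v,t},\ast)\to(U_{v,t}\otimes U_{v,t},\ast)$; since $\Phi$ and the target comultiplication $\triangle_{1}$ are both algebra maps determined by their values on the generators, the identities $(\Phi\otimes\Phi)\circ\bigtriangleup^{\ast}=\triangle_{1}\circ\Phi$, $\varepsilon_{1}\circ\Phi=\varepsilon^{\ast}$ and $\Phi\circ S^{\ast}=S_{1}\circ\Phi$ hold after being checked on generators. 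The remaining assertion $\mathbb{Q}(v,t)\otimes U_v(I,\cdot)\cong(U_{v,t}(I,\cdot),\ast)$ follows by the same mechanism: applying the $\ast$-twist gauges away the residual $t$-factors, so the surviving relations become the Lusztig relations in $v$ alone, extended by scalars to $\mathbb{Q}(v,t)$, and the identity-on-generators map realizes this base change.
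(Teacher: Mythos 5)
The paper does not prove this statement: it is quoted verbatim from \cite[Theorem 4]{FL} and used as a black box, so there is no internal proof to compare yours against. Your strategy --- read $\ast$ as a twist by the bicharacter $\sigma(x,y)=t^{-[|x|,|y|]'}$ of the $\mathbb{Z}^{I}\times\mathbb{Z}^{I}$-grading, verify the defining relations on generators, and then argue bijectivity --- is the standard route and, as far as the present paper lets one tell, is the same mechanism used in [FL]. The identity $[n]^!_{v,t}=t^{\binom{n}{2}}[n]^!_{v}$ that you isolate for the divided powers is correct ($[n]_{v,t}=t^{n-1}[n]_v$ follows directly from the definition) and is exactly the right tool for relation (R4).

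Two genuine gaps remain, however. First, the computations you describe as "combining to give exactly the target structure constants" are asserted, not performed, and with the conventions as literally transcribed in this paper they do not come out as claimed: with $\deg(K_i)=(i,i)$, $\deg(E_j)=(j,0)$ and $[\gamma,\eta]'=[\gamma_2,\eta_2]-[\gamma_1,\eta_1]$ one finds $K_i\ast E_j\ast K_i^{-1}=t^{[i,j]-[j,i]}\,K_iE_jK_i^{-1}=v^{i\cdot j}t^{2(\langle j,i\rangle-\langle i,j\rangle)}E_j$, i.e.\ the twist \emph{doubles} the residual $t$-exponent instead of cancelling it. Either the paper's transcription of $[\cdot,\cdot]'$ carries a sign/index error relative to [FL], or the check fails; a blind proof must detect and resolve this, since (R2) is the easiest relation and (R4) only gets harder. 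Second, your injectivity argument leans on a PBW basis of $U_{v,t}$, but in [FL] the PBW and canonical basis theory for $U_{v,t}$ is \emph{obtained} by transporting the one-parameter theory through this very isomorphism, so the argument is circular as stated. The clean replacement is to note that the twist is invertible --- $x\ast' y:=t^{+[|x|,|y|]'}\,x\ast y$ recovers the original product --- so once the relations are checked in both directions one obtains mutually inverse generator-preserving homomorphisms; no basis is needed. Relatedly, be careful with the Hopf step: the paper's multiplication on $U_{v,t}\otimes U_{v,t}$ is taken componentwise, and since $[\cdot,\cdot]'$ is bilinear the cross terms $[|x_{(1)}|,|y_{(2)}|]'+[|x_{(2)}|,|y_{(1)}|]'$ do not vanish, so $\bigtriangleup^{\ast}$ is not equal to $\bigtriangleup$ as a map; "agrees on generators" is the correct statement, but your sketch should not suggest the two coincide.
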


 In the rest of this section, grading on $U_{v,t}$ refers to the $\mathbb{Z}^{I} \times \mathbb{Z}^{I}$-grading $(\cdot ,\cdot )$. In this case, homogeneous central elements can be defined with degree $(\eta,\eta)$ for some $\eta \in \mathbb{Z}^{I}$  from the fact that central elements commute with $K_{i}$ and $K'_{i}$ for all $i \in I$. This grading allows us to consider homogeneous central elements  for convenience.

\begin{thm}
The two following propositions are equivalent :

\ $(a)$ There doesn't exist any nonzero $\eta \in Q$, such that $[\eta,i]-[i,\eta]=0$ for all $i \in I$.

\ $(b)$ $\xi(Z(U_{v,t}))=(U^{0}_{\flat})^{W}$.
\end{thm}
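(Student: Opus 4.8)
The plan is to use that, by the inclusion $(U^{0}_{\flat})^{W}\subseteq\xi(Z(U_{v,t}))$ already established, statement (b) is equivalent to the reverse inclusion $\xi(Z(U_{v,t}))\subseteq(U^{0}_{\flat})^{W}$, so the whole theorem reduces to controlling which degrees the centre occupies for the $\mathbb{Z}^{I}\times\mathbb{Z}^{I}$-grading of Section 5. First I would record that a central element splits into homogeneous components, each again central, and that commuting with all $K_{i},K'_{i}$ forces a homogeneous central element to have degree $(\zeta,\zeta)$ for some $\zeta\in Q$ (the $v$-part of conjugation by $K_{i}$ is $v^{i\cdot(\gamma_{1}-\gamma_{2})}$, which is trivial for all $i$ only if $\gamma_{1}=\gamma_{2}$, using that the Cartan matrix is nondegenerate of finite type). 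For such a $z$ only its $U^{0}$-component $z_{0}=\sum_{\eta+\phi=\zeta}\theta_{\eta,\phi}K'_{\eta}K_{\phi}$ survives $\pi$, and since $\gamma^{-\rho}$ merely rescales each $K'_{\eta}K_{\phi}$, the image $\xi(z)$ lies in $\bigoplus_{\eta+\phi=\zeta}\mathbb{Q}(v,t)K'_{\eta}K_{\phi}$; as $\xi$ is injective we have $z_{0}\neq0$ when $z\neq0$, and this subspace meets $U^{0}_{\flat}$ only when $\zeta=0$. Hence $\xi(Z(U_{v,t}))\subseteq U^{0}_{\flat}$ holds exactly when the centre has no nonzero homogeneous component of degree $(\zeta,\zeta)$ with $\zeta\neq0$.

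For $\neg(a)\Rightarrow\neg(b)$ I would exhibit such a component directly. If (a) fails there is a nonzero $\zeta\in Q$ with $[\zeta,i]-[i,\zeta]=0$, i.e. $\langle i,\zeta\rangle=\langle\zeta,i\rangle$, for all $i\in I$. Using (R2), conjugation of $E_{j}$ by $K_{\zeta}K'_{\zeta}$ produces the scalar $v^{(\zeta-\zeta)\cdot j}t^{2(\langle j,\zeta\rangle-\langle\zeta,j\rangle)}=1$, and similarly for $F_{j}$; with the trivial commutation with the $K$'s this shows $K_{\zeta}K'_{\zeta}$ is central, nonzero, of degree $(2\zeta,2\zeta)$ with $\zeta\neq0$. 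Its image is a nonzero multiple of $K'_{\zeta}K_{\zeta}\notin U^{0}_{\flat}$, so $\xi(Z(U_{v,t}))\not\subseteq(U^{0}_{\flat})^{W}$ and (b) fails.

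The substance is $(a)\Rightarrow(b)$, where I must rule out nonzero central components of degree $(\zeta,\zeta)$, $\zeta\neq0$; here I would pass to central characters on Verma modules. Since $U^{+}_{\nu}v_{\lambda}=0$ for $\nu>0$, only $z_{0}$ acts on $v_{\lambda}$, so $z$ acts on $M_{v,t}(\lambda)$ by the scalar $c_{z}(\lambda)=t^{\langle\lambda,\zeta\rangle-\langle\zeta,\lambda\rangle}\sum_{\eta+\phi=\zeta}\theta_{\eta,\phi}v^{(\phi-\eta)\cdot\lambda}$, computed from the weight action of Section 2. The nonzero homomorphism $M_{v,t}(\lambda-ni)\to M_{v,t}(\lambda)$ of the Section 2 theorem (with $n=\lambda\cdot i+1$) intertwines the $z$-action and sends $v_{\lambda-ni}$ to $F_{i}^{n}v_{\lambda}\neq0$, forcing $c_{z}(\lambda)=c_{z}(\lambda-ni)$ for all $\lambda\in\Lambda^{+}$ and all $i\in I$. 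Writing $F(\lambda)=\sum_{\eta+\phi=\zeta}\theta_{\eta,\phi}v^{(\phi-\eta)\cdot\lambda}$ and $G(\lambda)=\sum_{\eta+\phi=\zeta}\theta_{\eta,\phi}v^{(\phi-\eta)\cdot(\lambda-ni)}$ and cancelling the common $t$-factor, this identity becomes $F(\lambda)=t^{-n([\zeta,i]-[i,\zeta])}G(\lambda)$.

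The step I expect to be the main obstacle is extracting the power of $t$ from this last identity, since the coefficients $\theta_{\eta,\phi}$ themselves lie in $\mathbb{Q}(v,t)$ and carry their own $t$-dependence. I would resolve it with the $t$-adic valuation $\nu_{t}$ on $\mathbb{Q}(v,t)=\mathbb{Q}(v)(t)$: both $F(\lambda)$ and $G(\lambda)$ are $\mathbb{Q}(v)$-combinations of the fixed $\theta_{\eta,\phi}$ with $t$-adic unit coefficients $v^{(\phi-\eta)\cdot\lambda}$, and for generic dominant $\lambda$ the exponents $(\phi-\eta)\cdot\lambda$, respectively $(\phi-\eta)\cdot(\lambda-ni)$, are pairwise distinct, so the lowest-order $t$-terms do not cancel and $\nu_{t}(F(\lambda))=\nu_{t}(G(\lambda))=\min_{\eta,\phi}\nu_{t}(\theta_{\eta,\phi})$, independent of $\lambda$. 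Applying $\nu_{t}$ to the identity gives $n([\zeta,i]-[i,\zeta])=0$, and since $n\geq1$ we get $[\zeta,i]=[i,\zeta]$ for every $i$; under (a) this forces $\zeta=0$, so every central element has degree $(0,0)$ and $\xi(Z(U_{v,t}))\subseteq U^{0}_{\flat}$. Finally the dot-invariance $c_{z}(\lambda)=c_{z}(\lambda-ni)$, rewritten through the $\rho$-shift built into $\gamma^{-\rho}$ together with the compatibility $\varrho^{\sigma(\lambda),\mu}(u)=\varrho^{\lambda,\mu}(\sigma^{-1}(u))$ and the separation of characters, upgrades this to $\sigma(\xi(z))=\xi(z)$, giving $\xi(Z(U_{v,t}))\subseteq(U^{0}_{\flat})^{W}$ and hence equality.
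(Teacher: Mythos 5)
Your treatment of $\neg(a)\Rightarrow\neg(b)$ coincides with the paper's: both exhibit $K'_{\zeta}K_{\zeta}$ for a nonzero $\zeta$ with $[\zeta,i]-[i,\zeta]=0$ as a central element whose Harish--Chandra image lies outside $U^{0}_{\flat}$. For the main direction $(a)\Rightarrow(b)$ you take a genuinely different route. The paper argues purely algebraically inside $U_{v,t}$: for a putative homogeneous central element $u$ of degree $(\eta,\eta)$ with $\eta\neq0$ it isolates the component supported on the maximal $\gamma$ with nonzero $U^{-}_{(0,\gamma)}U^{0}U^{+}_{(\gamma,0)}$-part, extracts from $E_{i}u=uE_{i}$ the relation $v^{(b'-a')\cdot i}t^{\langle i,\eta\rangle-\langle\eta,i\rangle}E_{i}\ast U^{+}_{(\gamma,0)}=U^{+}_{(\gamma,0)}\ast E_{i}$, and then uses the isomorphism $(U_{v,t},\ast)\cong\mathbb{Q}(v,t)\otimes U_{v}(I,\cdot)$ of Theorem 5.1 to force $\langle i,\eta\rangle-\langle\eta,i\rangle=0$. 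You instead detect the same obstruction representation-theoretically, via the central character on the linked Verma modules $M_{v,t}(\lambda)$ and $M_{v,t}(\lambda-ni)$ and a $t$-adic valuation; this relies only on Section 2 and the injectivity of $\xi$, avoids the $\ast$-twist theorem, and as a bonus supplies the $W$-invariance of the degree-$(0,0)$ images, which the paper merely asserts as ``parallel to the one-parameter case.'' One step of yours needs more care than you give it: in the valuation argument, pairwise distinctness of the exponents $(\phi-\eta)\cdot\lambda$ does not by itself prevent the lowest-order $t$-coefficients from cancelling, since those coefficients lie in $\mathbb{Q}(v)$ rather than being monomials in $v$. You must invoke linear independence of the characters $\lambda\mapsto v^{\beta\cdot\lambda}$ (available because the Cartan form is nondegenerate) to produce a dominant $\lambda\in\Lambda^{+}\cap Q$ at which both $\nu_{t}(F(\lambda))$ and $\nu_{t}(G(\lambda))$ equal $\min_{\eta,\phi}\nu_{t}(\theta_{\eta,\phi})$ --- for instance by taking $\lambda=N\mu$ for a fixed strictly dominant $\mu\in Q$ and all but finitely many $N$. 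With that repair the argument goes through and establishes the theorem.
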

\begin{proof}

 Assume $(b)$ holds. If there is some nonzero $\eta' \in \mathbb{Z}^{I}$, such that $[\eta',i]-[i,\eta']=0$ for all $i \in I$, then  consider the commutation relation of the element $K'_{\eta'}K_{\eta'}$. For all $i \in I$,
$$K'_{\eta'}K_{\eta'}E_{i}=t^{\langle 2\eta',i\rangle-\langle i,2\eta'\rangle}E_{i}K'_{\eta'}K_{\eta'}=t^{[i,2\eta']-[2\eta',i]}E_{i}K'_{\eta'}K_{\eta'}=E_{i}K'_{\eta'}K_{\eta'},$$
$$K'_{\eta'}K_{\eta'}F_{i}=t^{\langle i,2\eta'\rangle-\langle2\eta',i\rangle}F_{i}K'_{\eta'}K_{\eta'}=t^{[2\eta',i]-[i,2\eta']}F_{i}K'_{\eta'}K_{\eta'}=F_{i}K'_{\eta'}K_{\eta'}.$$
So we have $K'_{\eta'}K_{\eta'} \in Z(U_{v,t})$, which is contradictory to $(b)$.

Conversely, assume $(a)$ holds. Since we have proved $(U^{0}_{\flat})^{W}\subseteq\xi(Z(U_{v,t}))$ in theorem 4.12, then $\xi(Z(U_{v,t}))=(U^{0}_{\flat})^{W}$ is equivalent to $(U^{0}_{\flat})^{W}\supseteq\xi(Z(U_{v,t}))$. Parallel to the one parameter case, the images of Harish-Chandra homomorphism of the central elements with degree $(0,0)$ are in  $(U^{0}_{\flat})^{W}$.  So, to show $(U^{0}_{\flat})^{W}\supseteq\xi(Z(U_{v,t}))$, it is enough to show there is no central element $z$ with degree $(\eta,\eta)$, where $\eta\neq 0$ and $\eta$ is under the assumption $(a)$. If there exists some homogeneous central element $u$ with degree $|u|=(\eta,\eta)$, $\eta \in \mathbb{Z}^{I}\setminus\{0\}$, we have
$$u\in \bigoplus_{\alpha \in Q^{+}} U_{(0,\alpha)}^{-}U^{0}_{(\eta-\alpha,\eta-\alpha)}U^{+}_{(\alpha,0)}.$$

Choose  $\gamma \in Q^{+}$ as the maximal one in $u$ such that $ U_{(0,\gamma)}^{-}U^{0}_{(\eta-\gamma,\eta-\gamma)}U^{+}_{(\gamma,0)}$ are nonzero. Also choose a basis $\{x^{\gamma}_{1},x^{\gamma}_{2},\ldots,x^{\gamma}_{d_{\gamma}}\}$ of $ U_{(0,\gamma)}^{-}$ where $d_{\gamma}$ denote the dimension of $U_{(0,\gamma)}^{-}$.
 From the triangle decomposition, we have

$$u=\sum^{d_{\gamma}}_{j=1}\theta_{j}x^{\gamma}_{j}(\sum_{a+b=\eta-\gamma}K_{a}'K_{b}U^{+}_{(\gamma,0)})+\sum_{\alpha\neq\gamma}U_{(0,\alpha)}^{-}U^{0}_{(\eta-\alpha,\eta-\alpha)}U^{+}_{(\alpha,0)}.$$
where $\theta_{j} \in \mathbb{Q}(v,t)$.
Under the assumption $(a)$, there is some $i \in I$ satisfying $[\eta,i]-[i,\eta]\neq0$. Then $E_{i}u=uE_{i}$ can be expressed as
\begin{equation*}
\begin{aligned}
E_{i}(\sum^{d_{\gamma}}_{j=1}\theta_{j}x^{\gamma}_{j}(\sum_{a+b=\eta-\gamma}K_{a}'K_{b}U^{+}_{(\gamma,0)})+\sum_{\alpha\neq\gamma}U_{(0,\alpha)}^{-}U^{0}_{(\eta-\alpha,\eta-\alpha)}U^{+}_{(\alpha,0)})\\
=(\sum^{d_{\gamma}}_{j=1}\theta_{j}x^{\gamma}_{j}(\sum_{a+b=\eta-\gamma}K_{a}'K_{b}U^{+}_{(\gamma,0)})+\sum_{\alpha\neq\gamma}U_{(0,\alpha)}^{-}U^{0}_{(\eta-\alpha,\eta-\alpha)}U^{+}_{(\alpha,0)})E_{i}.
\end{aligned}
\end{equation*}

Note that $E_{i}U_{(0,\gamma)}^{-}U^{0}_{(\eta-\gamma,\eta-\gamma)}U^{+}_{(\gamma,0)}$ is the only term of $E_{i}u$ involving $U_{(0,\gamma)}^{-}U^{0}_{(\eta-\gamma,\eta-\gamma)}U^{+}_{(\gamma+i,0)}$ because of the maximum of $\gamma$. For the same reason, $U_{(0,\gamma)}^{-}U^{0}_{(\eta-\gamma,\eta-\gamma)}U^{+}_{(\gamma,0)}E_{i}$ is the only term of $uE_{i}$ involving $U_{(0,\gamma)}^{-}U^{0}_{(\eta-\gamma,\eta-\gamma)}U^{+}_{(\gamma+i,0)}$.

And
\begin{equation*}
\begin{aligned}
&\ \ \ \ \ \ \ \ \ \ \ \  \ E_{i}(\sum^{d_{\gamma}}_{j=1}\theta_{j}x^{\gamma}_{j}(\sum_{a+b=\eta-\gamma}K_{a}'K_{b}U^{+}_{(\gamma,0)})\\
&=\sum^{d_{\gamma}}_{j=1}\theta_{j}x^{\gamma}_{j}E_{i}(\sum_{a+b=\eta-\gamma}K_{a}'K_{b}U^{+}_{(\gamma,0)})+U_{(0,\gamma-i)}U^{0}U_{(\gamma,0)}\\
&=\sum^{d_{\gamma}}_{j=1}v^{(b-a)\cdot i}t^{\langle i,\eta-\gamma\rangle-\langle\eta-\gamma,i\rangle}\theta_{j}x^{\gamma}_{j}(\sum_{a+b=\eta-\gamma}K_{a}'K_{b}E_{i}U^{+}_{(\gamma,0)})+U_{(0,\gamma-i)}U^{0}U_{(\gamma,0)}.\\
\end{aligned}
\end{equation*}

 Choose some $\theta_{k}\neq0$, $K_{a'}$ and $K'_{b'}$, since $E_{i}u=uE_{i}$, we must have
$$v^{(b'-a')\cdot i}t^{\langle i,\eta-\gamma\rangle-\langle\eta-\gamma,i\rangle}\theta_{k}x^{\gamma}_{k}K_{a'}'K_{b'}E_{i}U^{+}_{(\gamma,0)}=\theta_{k}x^{\gamma}_{k}K_{a'}'K_{b'}U^{+}_{(\gamma,0)}E_{i}.$$

 Since $\{x^{\gamma}_{1},x^{\gamma}_{2},\ldots,x^{\gamma}_{d_{\gamma}}\}$ is a basis of $U_{(0,\gamma)}^{-}$, we obtain

 $$v^{(b'-a')\cdot i}t^{\langle i,\eta-\gamma\rangle-\langle\eta-\gamma,i\rangle}E_{i}U^{+}_{(\gamma,0)}=U^{+}_{(\gamma,0)}E_{i}.$$

 Rewrite the above equation under the multiplication $\ast$, we have
$$v^{(b'-a')\cdot i}t^{\langle i,\eta\rangle-\langle\eta,i\rangle}E_{i}\ast U^{+}_{(\gamma,0)}=U^{+}_{(\gamma,0)}\ast E_{i}.$$

 Here the element $U^{+}_{(\gamma,0)}$ can be expressed as the finite sum $\theta'\sum_{i\in \mathbb{Z}} t^{i}\otimes x_{i}$ where $\theta' \in \mathbb{Q}(v,t)$  and $x_{i}\in U_{v}$, and note that only finite $x_{i}$ are nonzero.
 From the equation
 $$v^{(b'-a')\cdot i}t^{\langle i,\eta\rangle-\langle\eta,i\rangle}E_{i}\ast U^{+}_{(\gamma,0)}=U^{+}_{(\gamma,0)}\ast E_{i},$$
 we know  ${\langle i,\eta\rangle-\langle\eta,i\rangle}$ must be $0$. It is a contradiction.

\end{proof}

Now we would like to consider some subalgebras of $U_{v,t}(g)$  and compare the images of Harish-Chandra homomorphism.
For $J \subseteq I$, let $U_{J}$ be the subalgebra of $U_{J}=\langle U^{0}_{v,t},E_{i},F_{i} | i\in J\rangle$ generated by $U^{0}_{v,t}$ and $E_{i}$, $F_{i}$ with $i \in J$. We denote
 by $Z_{J}$ the centre of $U_{J}$ and  $ \xi_{J} $ the Harish-Chandra homomorphism for $U_{J}$. Then we would like to show $\text{Im}(\xi)\subset \text{Im}(\xi_{J})$.
Denote by $U^{+}_{J}$ the subalgebra of $U_{J}$ generated by  $E_{i}$ with $i \in J$  and   $U^{-}_{J}$ the subalgebra of $U_{J}$ generated by $F_{i}$ with $i \in J$. Recall  the skew-Hopf pair $(\cdot|\cdot)$  on $U_{v,t}$. Set
$$R^{+}_{J}=\{x \in U^{+}_{v,t}|(x|U_{J}^{-})=0\}=\{x \in U^{+}_{v,t}|(x|U_{J}^{-}U_{v,t}^{0})=0\},$$
$$R^{-}_{J}=\{y \in U^{-}_{v,t}|(U^{+}_{j}|y)=0\}=\{y \in U^{-}_{v,t}|(U_{v,t}^{0}U^{+}_{J}|y)=0\},$$
$$R_{J}=R^{-}_{J}U^{0}_{v,t}U^{+}_{v,t}+U^{-}_{v,t}U^{0}_{v,t}R^{+}_{J}.$$

\begin{lem}
We have
\begin{equation*}
\begin{aligned}
&(1)\ \ U_{v,t}=U_{J}\oplus R_{J},\\
&(2)\ \ U_{J}R_{J}U_{J}\subset R_{J},\\
&(3)\ \ (\epsilon \otimes 1 \otimes \epsilon )(R_{J})=0.\\
\end{aligned}
\end{equation*}

\end{lem}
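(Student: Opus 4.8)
The plan is to deduce all three parts from two auxiliary ``orthogonal complement'' decompositions of the positive and negative parts, together with the compatibility of the skew-Hopf pairing $(\cdot|\cdot)$ with multiplication and comultiplication. First I would establish the graded decompositions
\[
U^{+}_{v,t}=U^{+}_{J}\oplus R^{+}_{J},\qquad U^{-}_{v,t}=U^{-}_{J}\oplus R^{-}_{J}.
\]
These respect the $Q^{+}$-grading by weight. In a weight $\mu\notin\mathbb{N}[J]$ one has $(U^{+}_{J})_{\mu}=0$ and $(R^{+}_{J})_{\mu}=U^{+}_{\mu}$, so there is nothing to check; for $\mu\in\mathbb{N}[J]$ the point is that $U_{J}$ is itself the two-parameter quantum group attached to the sub-datum $(J,\cdot)$, so the pairing restricts to a \emph{nondegenerate} pairing between $(U^{+}_{J})_{\mu}$ and $(U^{-}_{J})_{-\mu}$. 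This forces $(U^{+}_{J})_{\mu}\cap (R^{+}_{J})_{\mu}=0$, and since each weight space is finite-dimensional (finite type), a dimension count using nondegeneracy of the full pairing $U^{+}_{\mu}\times U^{-}_{-\mu}$ gives $\dim (R^{+}_{J})_{\mu}=\dim U^{+}_{\mu}-\dim (U^{+}_{J})_{\mu}$, hence the direct sum; the negative case is symmetric.

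Part (1) then follows from the triangular decomposition $U_{v,t}\cong U^{-}_{v,t}\otimes U^{0}_{v,t}\otimes U^{+}_{v,t}$: substituting the two decompositions and expanding produces $U_{J}=U^{-}_{J}U^{0}_{v,t}U^{+}_{J}$ together with the three ``cross'' summands $U^{-}_{J}U^{0}_{v,t}R^{+}_{J}$, $R^{-}_{J}U^{0}_{v,t}U^{+}_{v,t}$ and $R^{-}_{J}U^{0}_{v,t}R^{+}_{J}$, whose sum is exactly $R_{J}$. For part (3), note that $(1|1)=1$ forces $(R^{+}_{J})_{0}=(R^{-}_{J})_{0}=0$, so $\epsilon$ annihilates the outer factor of either defining summand of $R_{J}$: the $U^{-}$-leg lies in $R^{-}_{J}$ in the first summand, the $U^{+}$-leg in $R^{+}_{J}$ in the second; hence $(\epsilon\otimes 1\otimes\epsilon)(R_{J})=0$.

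The real work is part (2), which I expect to be the main obstacle. The first ingredient is that $R^{+}_{J}$ is a two-sided ideal of $U^{+}_{v,t}$ (and $R^{-}_{J}$ of $U^{-}_{v,t}$): since $\Delta(F_{i})=1\otimes F_{i}+F_{i}\otimes K'_{i}$ has all tensor-legs in $U^{-}_{J}U^{0}_{v,t}$ for $i\in J$, multiplicativity of $\Delta$ gives $\Delta(U^{-}_{J})\subseteq (U^{-}_{J}U^{0}_{v,t})\otimes(U^{-}_{J}U^{0}_{v,t})$, and expanding $(zx|y)$ and $(xz|y)$ for $y\in U^{-}_{J}U^{0}_{v,t}$ via the pairing–coproduct compatibility always leaves a factor $(x|U^{-}_{J}U^{0}_{v,t})=0$ when $x\in R^{+}_{J}$. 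The second ingredient is that the skew-derivations preserve these radicals: because $a_{i},a'_{i}$ on $U^{-}_{v,t}$ are, up to scalars, adjoint to right/left multiplication by $E_{i}$ under $(\cdot|\cdot)$, for $i\in J$ one gets $(U^{+}_{J}|a_{i}(y))=0$ whenever $(U^{+}_{J}|y)=0$ (using $x E_{i}\in U^{+}_{J}$ for $x\in U^{+}_{J}$), so $a_{i}(R^{-}_{J}),a'_{i}(R^{-}_{J})\subseteq R^{-}_{J}$; symmetrically $p_{i},p'_{i}$ preserve $R^{+}_{J}$.

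With these in hand I would verify $U_{J}R_{J}U_{J}\subseteq R_{J}$ on the generators $U^{0}_{v,t},E_{i},F_{i}$ ($i\in J$). The $U^{0}_{v,t}$-stability is immediate since $R^{\pm}_{J}$ are weight-graded, so conjugation by $K_{\mu},K'_{\mu}$ preserves them. For $E_{i}$ acting on $U^{-}_{v,t}U^{0}_{v,t}R^{+}_{J}$ one straightens $E_{i}$ to the right using $E_{i}F_{j}-F_{j}E_{i}=\delta_{ij}\frac{K_{i}-K'_{i}}{v_{i}-v_{i}^{-1}}$ and lands in $U^{-}_{v,t}U^{0}_{v,t}(U^{+}_{J}R^{+}_{J})\subseteq U^{-}_{v,t}U^{0}_{v,t}R^{+}_{J}$ by the ideal property; for $E_{i}$ acting on $R^{-}_{J}U^{0}_{v,t}U^{+}_{v,t}$ one uses $E_{i}y=yE_{i}+\frac{K_{i}a'_{i}(y)-a_{i}(y)K'_{i}}{v_{i}-v_{i}^{-1}}$ with $a_{i}(y),a'_{i}(y)\in R^{-}_{J}$ to stay inside $R^{-}_{J}U^{0}_{v,t}U^{+}_{v,t}$. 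The cases of $F_{i}$ and of right multiplication are entirely parallel. The delicate point — and where I would be most careful — is organizing the straightening so that every term produced by the commutators keeps either its $U^{-}$-leg in $R^{-}_{J}$ or its $U^{+}$-leg in $R^{+}_{J}$, which is precisely what the ideal property together with the stability of $a_{i},a'_{i},p_{i},p'_{i}$ guarantees.
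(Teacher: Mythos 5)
Your proposal is correct and follows essentially the same route as the paper: part (1) via the kernel-of-restriction description of $R^{+}_{J,\gamma}$, nondegeneracy of the pairing on $U^{+}_{J,\gamma}\times U^{-}_{J,-\gamma}$, and a dimension count; part (2) by first showing $R^{\pm}_{J}$ are two-sided ideals of $U^{\pm}_{v,t}$ and that the skew-derivations $a_{i},a'_{i}$ (resp.\ $p_{i},p'_{i}$) preserve them for $i\in J$, then straightening with the commutator formula; and part (3) from the vanishing of the degree-zero components.
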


\begin{proof}

(1). \ It suffices to show $U^{+}_{\gamma}=U^{+}_{J,\gamma} \oplus R^{+}_{J, \gamma}$  for any $\gamma \in Q^{+}$. From the definition of $R_{J}^{+}$, we get the following equation,
$$R^{+}_{J,\gamma}=\text{Ker}(U_{\gamma}^{+} \rightarrow (U^{-}_{-\gamma})^{\ast} \rightarrow (U^{-}_{J,-\gamma})^{\ast}), $$
where $U_{\gamma}^{+} \rightarrow (U^{-}_{-\gamma})^{\ast}$ is an algebra isomorphism because of the fact that skew-Hopf pair is nondegenerate on $U^{+}_{\gamma}\times U^{-}_{-\gamma}$ and $(U^{-}_{-\gamma})^{\ast} \rightarrow (U^{-}_{J,-\gamma})^{\ast}$ is the canonical  projection.

From the above equation and the epimorphism of $U_{\gamma}^{+} \rightarrow (U^{-}_{-\gamma})^{\ast}$, we have the following dimensional equation,
$$\dim R^{+}_{J, \gamma}=\dim U^{+}_{\gamma}-\dim U^{-}_{J,\gamma}=\dim U^{+}_{\gamma}-\dim U^{+}_{J,\gamma}. $$
Since $(\cdot |\cdot )$ is nondegenerate on $U_{J,\gamma}^{+} \times U^{-}_{J,-\gamma}  $, we have $R^{+}_{J,\gamma}\cap U^{+}_{J,\gamma}= \{0 \}$.

(2). \ For one $x^{-}_{J} \in U^{-}_{J}$ , from \textrm{proposition 3.1}  and \textrm{lemma 3.2}, we  get

  $$(E_{i}x | x^{-}_{J})=(E_{i}|F_{i}K'_{|x^{-}_{J}|-i})(x|a'_{i}(x_{J}^{-})),\ \text{for all}\  x \in R^{+}_{J}\ \text{and}\ i \in I.$$
Since $a'_{i}(x_{J}^{-}) \in U^{-}_{J}$, we obtain  $(E_{i}x | x^{-}_{J})=0$.

  Similarly,
  $$(x E_{i}| x^{-}_{J})=(E_{i}|F_{i})(x|a_{i}(x_{J}^{-})K'_{i}),\ \text{for all}\  x \in R^{+}_{J}.$$
Since $a_{i}(x_{J}^{-}) \in U^{-}_{J}$, we obtain  $(x E_{i}| x^{-}_{J})=0$.

  So we can conclude that $R^{+}_{J}$ is a two-sided ideal of $U^{+}_{v,t}$. By analogue with the $R^{+}_{J}$ case, $R^{-}_{J}$ is a two-sided ideal of $U^{-}_{v,t}$. Hence it suffices to show
  $$U^{+}_{J}R^{-}_{J} \subset R^{-}_{J}U_{v,t}, \ \ \ \ \ R^{+}_{J}U^{-}_{J} \subset U_{v,t}R^{+}_{J}.$$

  We show $U^{+}_{J}R^{-}_{J} \subset R^{-}_{J}U_{v,t}$ for example, and the other case can be proved in a similar way.

  Set any $y \in R^{-}_{J}$ and  $j \in J$,  from \textrm{lemma 3.2}, we can get

  $$yE_{j}-E_{j}y=\frac{a_{j}(y)K'_{j}-K_{j}a'_{j}(y)}{v_{j}-v^{-1}_{j}}.$$

  Since $(E_{j}K'_{|y|-j}U^{+}_{J}|y)=(E_{j}K_{|y|-j}|F_{j}K'_{|y|-j})(U^{+}_{J}|a'_{j}(y))$ and $E_{j}U^{+}_{J} \in U^{+}_{J}$, we have $(E_{j}U^{+}_{J}|y)=0$. Then $(U^{+}_{J}|a'_{j}(y))=0 $ because $(E_{j}|F_{j})\neq 0$. That is to say, $a'_{j}(y)$ belongs to $R^{-}_{J}$. Similarly, $a_{j}(y)$ belongs to $R^{-}_{J}$. (2) is proved.

  (3). Clear.
\end{proof}

\begin{prop}
$\text{Im}(\xi)\subseteq \text{Im}(\xi_{J})$
\end{prop}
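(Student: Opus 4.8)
The plan is to exploit the direct-sum decomposition $U_{v,t}=U_J\oplus R_J$ of Lemma 5.3(1): for each central element of $U_{v,t}$ I want to produce a central element of $U_J$ with the \emph{same} Harish-Chandra image. Concretely, let $p\colon U_{v,t}\to U_J$ be the projection along $R_J$ provided by part (1). I would prove two things: first, that $p$ restricts to a map $Z(U_{v,t})\to Z_J$; and second, that $p$ leaves the Harish-Chandra image unchanged, i.e. $\xi(z)=\xi_J(p(z))$ for every $z\in Z(U_{v,t})$. Together these give $\xi(z)\in\text{Im}(\xi_J)$, which is exactly the assertion $\text{Im}(\xi)\subseteq\text{Im}(\xi_J)$.

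For the first step, fix $z\in Z(U_{v,t})$ and write $z=p(z)+r$ with $p(z)\in U_J$ and $r\in R_J$. For any $u\in U_J$, centrality of $z$ gives $up(z)+ur=uz=zu=p(z)u+ru$. Since $U_J$ is a subalgebra, $up(z)$ and $p(z)u$ lie in $U_J$; and by Lemma 5.3(2) (taking $1\in U_J$) one has $U_JR_J\subseteq R_J$ and $R_JU_J\subseteq R_J$, so $ur,ru\in R_J$. Comparing the $U_J$- and $R_J$-components across the direct sum forces $up(z)=p(z)u$ for all $u\in U_J$. As $p(z)\in U_J$, this means $p(z)\in Z_J$; in particular $p(z)$ commutes with the $K_i,K_i'$ and so has weight $0$, putting it in the domain of $\xi_J$.

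For the second step I would check that the two projections agree. The decomposition of Lemma 5.3(1) is built degree by degree ($U^{+}_\gamma=U^{+}_{J,\gamma}\oplus R^{+}_{J,\gamma}$), hence is weight-graded, so $p$ is weight-preserving and $r=z-p(z)$ again has weight $0$. Moreover $R^{+}_{J},R^{-}_{J}$ are concentrated in strictly positive degree: since $U^{+}_0=\mathbb{Q}(v,t)\cdot 1=U^{+}_{J,0}$ we get $R^{+}_{J,0}=0$, and likewise $R^{-}_{J,0}=0$. Consequently the summand $R_J=R^{-}_{J}U^0U^{+}_{v,t}+U^{-}_{v,t}U^0R^{+}_{J}$ has trivial $U^0$-component in the triangular decomposition, so $\pi(r)=0$ and $\pi(z)=\pi(p(z))$. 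Because $U^0\subseteq U_J$ and the triangular decomposition of $U_J$ is induced from that of $U_{v,t}$, the $U^0$-component of $p(z)$ computed inside $U_J$ is the same element, i.e. $\pi_J(p(z))=\pi(z)$. Applying the common twist $\gamma^{-\rho}$ on $U^0$ then yields $\xi_J(p(z))=\xi(z)$, completing the argument.

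The main obstacle is the transfer of centrality in the first step: it depends entirely on the bimodule property $U_JR_JU_J\subseteq R_J$ of Lemma 5.3(2) combined with the directness of $U_{v,t}=U_J\oplus R_J$, so one must be sure that projection along $R_J$ really is $U_J$-bilinear (not merely linear). A secondary point requiring care is the compatibility of the two Harish-Chandra maps, namely that $R_J$ contributes nothing to the $U^0$-part \emph{and} that $\xi$ and $\xi_J$ are normalised by the same twist $\gamma^{-\rho}$ on the shared Cartan $U^0$; only then does the equality $\pi(z)=\pi_J(p(z))$ upgrade to $\xi(z)=\xi_J(p(z))$.
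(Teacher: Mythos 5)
Your argument is correct and is essentially the paper's own proof: the paper also writes $z=z_1+z_2$ with $z_1\in U_J$, $z_2\in R_J$, deduces $z_1\in Z(U_J)$ from Lemma 5.3(2), and concludes $\xi(z)=\xi_J(z_1)$ from Lemma 5.3(3). You have merely filled in the details the paper leaves implicit, namely the component-comparison that transfers centrality and the observation that $R_J$ has trivial $U^0$-part.
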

\begin{proof}
Let $z \in Z(U_{v,t})$ and write $z=z_{1}+z_{2}$ with $z_{1} \in U_{J}$, $z_{2} \in R_{J}$. \textrm{ lemma 5.3 (2)}, $z_{1} \in Z(U_{J})$, and hence by \textrm{By lemma 5.3 (3)}, $\xi(z)=\xi_{J}(z_{1}) \in Im(\xi_{J})$.
\end{proof}

With different choices of the set $J$, we shall get different results. Now consider a special case when $J={i}$  for one $i \in I$, then the algebra $U_{J_{i}} $ is generated by $E_{i}$, $F_{i}$ for this fixed $i$ and $K^{\pm 1}_{j}$, $K'^{\pm 1}_{j}$ for all $j \in I$. And let $U_{v,t}(g_{_{i}})$ be the subalgebra generated by $E_{i}$, $F_{i}$, $K^{\pm 1}_{i}$, $K'^{\pm 1}_{i}$ for this fixed $i \in I$.

The centre of $U_{v,t}(g_{_{i}})$ has a basis of monomials $X^{m}Y^{n}$, $m \in \mathbb{Z}$, $n \in \mathbb{Z}_{\geq0}$, where $X=K_{i}K'_{i}$  and $Y$ is the Casimir element,
            $$Y=E_{i}F_{i}+\frac{v^{-1}K_{i}+vK'_{i}}{(v-v^{-1})^{2}}=F_{i}E_{i}+\frac{vK_{i}+v^{-1}K'_{i}}{(v-v^{-1})^{2}}.$$

 The following theorem gives the description of the centre of $U_{J_{i}}$ for any fixed $i \in I$.
\begin{thm}
For any $i \in I$, denote by $Q'$ the $\mathbb{Z}$-lattice spanned by $I/\{i\}$. Then any $u \in  Z(U_{J_{i}})$   can be expressed as the sum
$$u=\bigoplus_{x \in Q', y \in Q'}\mathbb{Q}(v,t) K_{x}K'_{y}L,$$
where $L \in U_{v,t}(g_{_{i}})$ and $K_{x}$, $K'_{y}$ and $L$ satisfy the following two conditions:

$(1)$ we require $K_{x}$ and $K'_{y}$ to meet the condition: $E_{i}K_{x}K'_{y}=v^{-2k}K_{x}K'_{y}E_{i}$ for some $k \in \mathbb{Z}$.

$(2)$ $L=\{K^{k}_{i}(\Sigma_{m,n \in \mathbb{Z}} \mathbb{Q}(v,t)X^{m}Y^{n})\}$, where $X=K_{i}K'_{i},\ Y=E_{i}F_{i}+\frac{v^{-1}K_{i}+vK'_{i}}{(v-v^{-1})^{2}}$ and $k$ is in $(1)$.
\end{thm}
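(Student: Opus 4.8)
The plan is to reduce the whole problem to the rank-one subalgebra $U_{v,t}(g_{_i})$ by splitting off the ``extra'' Cartan directions indexed by $Q'$. By the triangular decomposition (PBW), every Cartan monomial factors as $K_xK_y'K_i^pK_i'^q$ with $x,y\in Q'$ and $p,q\in\mathbb{Z}$, and since $K_x,K_y'$ commute with $E_i$ and $F_i$ up to scalars, one obtains a direct-sum decomposition of vector spaces
$$U_{J_i}=\bigoplus_{x,y\in Q'}K_xK_y'\,U_{v,t}(g_{_i}).$$
Thus I would first write a central element uniquely as $u=\sum_{x,y\in Q'}K_xK_y'L_{x,y}$ with $L_{x,y}\in U_{v,t}(g_{_i})$, and translate the centrality of $u$ into separate conditions on each factor $L_{x,y}$.

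Next I would extract those conditions. Commuting $u$ with $K_i$ and comparing components (the $K_xK_y'$ being linearly independent and fixed under conjugation by $K_i$) forces each $L_{x,y}$ to have zero $i$-weight, i.e.\ to be a combination of monomials $F_i^aK_i^pK_i'^qE_i^a$. Writing $E_iK_xK_y'=c_{x,y}K_xK_y'E_i$ with $c_{x,y}=v^{(y-x)\cdot i}t^{\langle x+y,i\rangle-\langle i,x+y\rangle}$, and noting that correspondingly $F_iK_xK_y'=c_{x,y}^{-1}K_xK_y'F_i$, the relations $E_iu=uE_i$ and $F_iu=uF_i$ become, after comparing $K_xK_y'$-components,
$$L_{x,y}E_i=c_{x,y}E_iL_{x,y},\qquad F_iL_{x,y}=c_{x,y}L_{x,y}F_i,$$
for every pair $(x,y)$. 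Here I use that $E_iL_{x,y}$ and $L_{x,y}E_i$ again lie in $U_{v,t}(g_{_i})$, so the component-wise comparison is legitimate.

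The crux is solving these ``twisted central'' equations. I would expand $L_{x,y}=\sum_a F_i^aP_a(K_i,K_i')E_i^a$ and compare the terms of top $E_i$-degree in $L_{x,y}E_i=c_{x,y}E_iL_{x,y}$, using $E_iK_i^pK_i'^q=v^{-2p+2q}K_i^pK_i'^qE_i$ together with the $\mathfrak{sl}_2$-type commutator $E_iF_i^a=F_i^aE_i+(\text{lower order in }F_i)$. The leading coefficient then yields, for each monomial $K_i^pK_i'^q$ appearing in the top $P_a$, the identity $c_{x,y}=v^{2(p-q)}$. Hence $L_{x,y}\neq0$ forces $c_{x,y}$ to be an integral power of $v$ — equivalently, the $t$-exponent $\langle x+y,i\rangle-\langle i,x+y\rangle$ vanishes, which is exactly condition $(1)$ — and all monomials in the leading $P_a$ share a common value $k=p-q$.

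Finally I would ``untwist''. Once $c_{x,y}=v^{2k}$, a direct computation using $E_iK_i^{-k}=v^{2k}K_i^{-k}E_i$ and $F_iK_i^{-k}=v^{-2k}K_i^{-k}F_i$ shows that $K_i^{-k}L_{x,y}$ commutes with $E_i$, $F_i$, $K_i$ and $K_i'$, hence is central in $U_{v,t}(g_{_i})$. Invoking the quoted description $Z(U_{v,t}(g_{_i}))=\mathrm{span}\{X^mY^n\}$ with $X=K_iK_i'$ and $Y$ the Casimir element, we obtain $L_{x,y}=K_i^{k}\bigl(\sum_{m,n}\mathbb{Q}(v,t)X^mY^n\bigr)$, which is condition $(2)$; summing over $(x,y)$ recovers the asserted form of $u$. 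I expect the main obstacle to be the leading-term analysis of the previous step — tracking the reordering scalars carefully enough to pin $c_{x,y}$ down as an integral power of $v$ — together with the justification of the component-wise comparison in the direct-sum decomposition; the untwisting step and the appeal to the centre of the rank-one algebra are then routine.
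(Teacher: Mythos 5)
Your proposal follows essentially the same route as the paper's proof: decompose $u$ into $K_xK_y'$-components with $L_{x,y}\in U_{v,t}(g_{_i})$ (forced to have zero $i$-weight by conjugation with the Cartan generators), translate centrality into the twisted relation $L_{x,y}E_i=c_{x,y}E_iL_{x,y}$, pin down $c_{x,y}=v^{2k}$ by a leading-term comparison in the top $E_i/F_i$-degree, and then untwist by $K_i^{-k}$ to land in $Z(U_{v,t}(g_{_i}))=\mathrm{span}\{X^mY^n\}$. The argument is correct and, if anything, spelled out more carefully than the paper's version (explicit formula for $c_{x,y}$ and the observation that condition $(1)$ is exactly the vanishing of the $t$-exponent).
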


\begin{proof}
 For any $u \in U_{J_{i}}$, $u$ can be expressed as the sum

            $$u=\bigoplus_{x \in Q', y \in Q'} K_{x}K'_{y}L ,$$
where $L \in U_{v,t}(g_{_{i}})$.

 To determine the centre of $U_{J_{i}}$,  it is enough to find the central elements with the form $K_{x}K'_{y}L$ . That is to say, we have to check the equation
$$u(K_{x}K'_{y}L)=(K_{x}K'_{y}L) u, $$
for all $u \in U_{J_{i}}$.

(1) When $u=K_{j}$ for all $j \in I$.  $L$ has the form $L = \bigoplus_{a \in \mathbb{N} ; b,c \in \mathbb{Z}} E^{a}_{i}K_{bi}K'_{ci}F^{a}_{i}, i $ is the fixed one.

(2) When $u=K'_{j}$ for all $j \in I$. The result is the same as in case (1).

(3) When $u=E_{i}$, where $i$ is the fixed one. From the equation $E_{i}K_{x}K'_{y}L=K_{x}K'_{y}L E_{i}$, we may consider  $K_{x}K'_{y}(E_{i}L)=K_{x}K'_{y}(\theta L E_{i})$£¬ where $\theta$ is a constant  in $\mathbb{Q}(v,t)$ determined by $E_{i}$ and $K_{x}K'_{y}$. Assume $L$ has the form as in (1):

 $$L = \bigoplus_{a \in \mathbb{N} } E^{a}_{i}t_{a}F^{a}_{i},$$
here $t_{a} \in \mathbb{Q}(v,t)[K^{\pm}_{i},K'^{\pm}_{i}]$.

 Denote by $m$  the maximal number such that $E^{m}_{i}t_{m}F^{m}_{i}\neq 0$. Since we have $E_{i}L=\theta L E_{i}$, then
    $$E_{i}\bigoplus_{a \in \mathbb{N} } E^{a}_{i}t_{a}F^{a}_{i}=\theta \bigoplus_{a \in \mathbb{N} } E^{a}_{i}t_{a}F^{a}_{i} E_{i},$$
$$E^{m+1}_{i}t_{m}F^{m}_{i}+ E_{i}   \bigoplus_{a \in \mathbb{N},a<m } E^{a}_{i}t_{a}F^{a}_{i} = \theta E^{m}_{i}t_{m}F^{m}_{i}E_{i} +  \theta \bigoplus_{a \in \mathbb{N},a<m } E^{a}_{i}t_{a}F^{a}_{i} E_{i}.$$
It is obvious that $\theta E^{m}_{i}t_{m}F^{m}_{i}E_{i}$ is the only term on the right including  $E^{m+1}_{i}t_{m}F^{m}_{i}$. From the commutative relation of $U_{v,t}(g_{i})$, we conclude
$\theta \in v^{2n},$
where $n \in \mathbb{Z}$ determined by $E_{i}$ and $t_{m}.$

 The equation $E_{i}K_{x}K'_{y}L=K_{x}K'_{y}L E_{i}$ is equivalent to say $E_{i}L=\theta L E_{i}$, where $\theta$ determined by $K_{x}K'_{y}$ and $E_{i}$.  Form the above argument, we know $\theta$ is not an arbitrary element in $\mathbb{Q}(v,t)$ but belongs to $v^{2n}$. That is to say, $K_{x}K'_{y}$ should satisfy

 $$v^{2n}E_{i}K_{x}K'_{y}=K_{x}K'_{y} E_{i}, n \in \mathbb{Z}.$$

 The final problem is to find all  elements $L \in U_{v,t}(g_{i})$ satisfying

     $$ E_{i}L=v^{2n}L E_{i}, n \in \mathbb{Z}.$$

 For some  $k \in \mathbb{Z}$, set $\Gamma_{k}$ = $\{L| L \in U_{v,t}(g_{i}), E_{i}L=v^{2k}L E_{i} \}$.  In fact, for arbitrary $k \in \mathbb{Z}$,  we have $E_{i}K^{k}_{i}Z(U_{v,t}(g_{i}))=v^{2k}K^{k}_{i}Z(U_{v,t}(g_{i}))E_{i}$, and since $K_{i}$ is invertible, we conclude
  $$\Gamma_{k}=\{K^{k}_{i}(\Sigma_{m,n \in \mathbb{Z}} \mathbb{Q}(v,t)X^{m}Y^{n})\ |\ X=K_{i}K'_{i}, \ Y=E_{i}F_{i}+\frac{v^{-1}K_{i}+vK'_{i}}{(v-v^{-1})^{2}}\},$$
 for the fixed $i$ and $k$.

(4) When $u=F_{i}$,  where $i$ is the fixed one. The result is the same as in case (3).

\end{proof}

 Recall that \textrm{theorem 5.2} gives a criteria to justify when the extra central elements appear. Besides, to make the \textrm{proposition 5.4} more accurate, we calculate the centre of $U_{J_{i}}$ for each fixed $i \in I$ in \textrm{theorem 5.5}. Finally, we get the main theorem as follows.

\begin{thm} We have

   $(1)$\ If there exists $\eta \in Q$ such that $[\eta,i]-[i,\eta]=0$ for all $i \in  I$, then the centre $Z(U_{v,t})$ is isomorphic under $\xi$ to a subalgebra satisfying $\mathbb{Q}(v,t)[K_{\eta}'K_{\eta},K'^{-1}_{\eta}K^{-1}_{\eta}]\otimes(U^{0}_{\flat})^{W}\subseteq \xi(Z(U_{v,t}))\subseteq \bigcap_{i \in I}\xi(Z(U_{J_{i}}))$, where $Z(U_{J_{i}})$ is described in \textrm{theorem 5.5}.

    $(2)$ Otherwise  we have $\xi : Z(U_{v,t})\rightarrow (U^{0}_{\flat})^{W}$ is an isomorphism.\\
\end{thm}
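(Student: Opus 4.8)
The plan is to treat the two cases separately, in each case assembling the conclusion from results already in hand: injectivity of $\xi$ (Theorem 4.1), the inclusion $(U^{0}_{\flat})^{W}\subseteq\xi(Z(U_{v,t}))$ (Theorem 4.12), the dichotomy of Theorem 5.2, and the subalgebra comparison of Proposition 5.4. Case $(2)$ is immediate: its hypothesis is exactly condition $(a)$ of Theorem 5.2, so $\xi(Z(U_{v,t}))=(U^{0}_{\flat})^{W}$, and since $\xi$ is injective it restricts to an algebra isomorphism $Z(U_{v,t})\cong(U^{0}_{\flat})^{W}$. It remains to prove the sandwich in $(1)$; here, since the case $\eta=0$ would place us in $(2)$, I may assume the given $\eta\in Q$ is nonzero.

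For the upper bound in $(1)$ I would apply Proposition 5.4 with the singletons $J=\{i\}$ for $i\in I$. This gives $\mathrm{Im}(\xi)\subseteq\mathrm{Im}(\xi_{J_{i}})$ for each $i$, and intersecting over $i\in I$ yields $\xi(Z(U_{v,t}))\subseteq\bigcap_{i\in I}\xi_{J_{i}}(Z(U_{J_{i}}))$, with each $Z(U_{J_{i}})$ described by Theorem 5.5. This is the desired right-hand inclusion.

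For the lower bound, the hypothesis of $(1)$ supplies a nonzero $\eta$ with $[\eta,i]-[i,\eta]=0$ for all $i$, which is precisely the configuration in the proof of Theorem 5.2: checking relation (R2) directly shows $K'_{\eta}K_{\eta}$ commutes with every $E_{i}$ and $F_{i}$, so $K'_{\eta}K_{\eta}\in Z(U_{v,t})$, and likewise $K'^{-1}_{\eta}K^{-1}_{\eta}\in Z(U_{v,t})$. Since $K'_{\eta}K_{\eta}\in U^{0}$, applying $\xi=\gamma^{-\rho}\circ\pi$ gives $\xi(K'_{\eta}K_{\eta})=\gamma^{-\rho}(K_{\eta}K'_{\eta})=t^{2(\langle\eta,\rho\rangle-\langle\rho,\eta\rangle)}K'_{\eta}K_{\eta}$, a nonzero scalar multiple of $K'_{\eta}K_{\eta}$. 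As $\xi$ is an algebra homomorphism whose image is a subalgebra, $\xi(Z(U_{v,t}))$ then contains both $(U^{0}_{\flat})^{W}$ and the Laurent subalgebra $\mathbb{Q}(v,t)[K'_{\eta}K_{\eta},K'^{-1}_{\eta}K^{-1}_{\eta}]$, hence their product.

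The step I expect to be the crux is showing that this product is a genuine tensor product, i.e. that the multiplication map $\mathbb{Q}(v,t)[K'_{\eta}K_{\eta},K'^{-1}_{\eta}K^{-1}_{\eta}]\otimes(U^{0}_{\flat})^{W}\to U^{0}$ is injective, which is what licenses the left-hand inclusion in $(1)$. For this I would invoke the $\mathbb{Z}^{I}\times\mathbb{Z}^{I}$-grading on $U_{v,t}$: every element of $(U^{0}_{\flat})^{W}$ is homogeneous of degree $(0,0)$, whereas $(K'_{\eta}K_{\eta})^{m}$ has degree $(2m\eta,2m\eta)$, and these degrees are distinct for distinct $m$ because $\eta\neq0$. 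Hence any relation $\sum_{m}(K'_{\eta}K_{\eta})^{m}b_{m}=0$ with $b_{m}\in(U^{0}_{\flat})^{W}$ decomposes into homogeneous pieces $(K'_{\eta}K_{\eta})^{m}b_{m}=0$, and invertibility of $K'_{\eta}K_{\eta}$ forces $b_{m}=0$; this gives the required injectivity. Combining the two inclusions sandwiches $\xi(Z(U_{v,t}))$ as stated, and injectivity of $\xi$ identifies $Z(U_{v,t})$ with this intermediate subalgebra, completing $(1)$.
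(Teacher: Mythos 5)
Your proposal is correct and follows essentially the same route as the paper, which states this theorem without a separate proof as a direct assembly of Theorem 4.1 (injectivity of $\xi$), Theorem 4.12, Theorem 5.2, Proposition 5.4 and Theorem 5.5 --- exactly the ingredients you combine. Your grading argument for the injectivity of the multiplication map $\mathbb{Q}(v,t)[K'_{\eta}K_{\eta},K'^{-1}_{\eta}K^{-1}_{\eta}]\otimes(U^{0}_{\flat})^{W}\to U^{0}$ is a detail the paper leaves implicit, and it is a sound addition.
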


  Recall in \textrm{theorem 5.2}  we show that the nonzero homogeneous central elements only exist in the degree $(\eta,\eta)$ with $\eta \in Q$ satisfying $[\eta,i]-[i,\eta]=0,\  \forall i \in I $. So far, there are two situations  described in the sense of Harish-Chandra homomorphism:

  (1)\ Denote by $Z_{0}$ the set of central elements with degree $(0,0)$, they are described via the Harish-Chandra homomorphism $\xi : Z_{0}\rightarrow (U^{0}_{\flat})^{W}$.

  (2)\ Denote by $Z_{ev}$ the set of central elements with nonzero degree $(2\eta,2\eta)$ with $\eta \in Q$ satisfying $[\eta,i]-[i,\eta]=0,\  \forall i \in I.$ Since $K'_{\eta}K_{\eta}$ is invertible central elements, $Z_{ev}$ are described via the Harish-Chandra homomorphism $\xi : Z_{ev}\rightarrow \mathbb{Q}(v,t)[K_{\eta}'K_{\eta},K'^{-1}_{\eta}K^{-1}_{\eta}]\otimes(U^{0}_{\flat})^{W}$.

Then it is natural  to ask the question : besides the central elements whose images of Harish-Chandra homomorphism are in $\mathbb{Q}(v,t)[K_{\eta}'K_{\eta},K'^{-1}_{\eta}K^{-1}_{\eta}]\otimes(U^{0}_{\flat})^{W}$, are there  any other extra central elements? Consider the $U_{v,t}(sl_{2})$ case, the Casimir element $EF+\frac{v^{-1}K+vK'}{(v-v^{-1})^{2}}$ hasn't been described in $\mathbb{Q}(v,t)[K_{\eta}'K_{\eta},K'^{-1}_{\eta}K^{-1}_{\eta}]\otimes(U^{0}_{\flat})^{W}$ under the Harish-Chandra homomorphism.  To answer the question in more general cases, we have the following theorem.

\begin{thm}
The homogeneous central elements exist in and only in grading $(\eta,\eta)$ for all $\eta \in Q$  satisfying $[\eta,i]-[i,\eta]=0$, $\forall$ $i \in I$.
\end{thm}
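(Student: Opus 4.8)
The plan is to read the statement as a biconditional on gradings and prove the two directions separately, keeping the identification $\mathbb{Z}^I\cong Q$ throughout. The ``only in'' (necessity) direction is the substantive half, and it is already essentially carried out inside the proof of Theorem 5.2; the ``exist in'' (existence) direction will be witnessed by explicit central elements.

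\emph{Necessity.} Suppose $u\neq 0$ is homogeneous and central of degree $(p,q)$. Since $u$ commutes with every $K_i$ and $K_i'$, it lies in $U_0=U^0\oplus\bigoplus_{\nu>0}U^-_{-\nu}U^0U^+_\nu$. A typical summand $yK'_aK_bx$ with $y\in U^-_{-\nu}$ and $x\in U^+_\nu$ has degree
\begin{equation*}
(0,\nu)+(a+b,a+b)+(\nu,0)=(\nu+a+b,\ \nu+a+b),
\end{equation*}
which is diagonal, so every homogeneous component of an element of $U_0$ has equal coordinates and hence $p=q=:\eta\in Q$. If $\eta=0$ the condition holds trivially, so assume $\eta\neq 0$. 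Following the proof of Theorem 5.2, I would expand $u$ via the triangular decomposition, choose the maximal $\gamma\in Q^+$ with nonzero component in $U^-_{(0,\gamma)}U^0_{(\eta-\gamma,\eta-\gamma)}U^+_{(\gamma,0)}$, and isolate in $E_iu=uE_i$ the unique terms landing in $U^-_{(0,\gamma)}U^0U^+_{(\gamma+i,0)}$. This yields, for a suitable basis vector, the relation
\begin{equation*}
v^{(b'-a')\cdot i}\,t^{\langle i,\eta-\gamma\rangle-\langle\eta-\gamma,i\rangle}\,E_i\,U^+_{(\gamma,0)}=U^+_{(\gamma,0)}\,E_i .
\end{equation*}
Transporting this identity to the multiplication $\ast$ and invoking the Hopf-algebra isomorphism of Theorem 5.1 with the one-parameter algebra $U_v(I,\cdot)$, in which the corresponding relation carries no free power of $t$, forces $\langle i,\eta\rangle-\langle\eta,i\rangle=0$, equivalently $[\eta,i]-[i,\eta]=0$, for all $i\in I$.

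\emph{Existence.} The grading $(0,0)$ is occupied by the unit $1$, and more generally by the degree-$(0,0)$ central elements $z_\lambda=\beta^{-1}(f_\lambda)$ built in Theorem 4.11. For $\eta\neq 0$ satisfying $[\eta,i]-[i,\eta]=0$ for all $i$, the computation in the proof of Theorem 5.2 shows that $K'_\eta K_\eta$ commutes with every $E_i$ and $F_i$ (the $v$-contributions cancel and the residual power of $t$ is trivial precisely because $[\eta,i]=[i,\eta]$), so $K'_\eta K_\eta\in Z(U_{v,t})$; it is homogeneous of degree $(2\eta,2\eta)$, and since $\eta$ satisfies the condition exactly when $2\eta$ does, these invertible central elements occupy the nonzero admissible gradings and show the bound from necessity is attained.

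The hard part is the necessity step. The delicate points are the isolation of the top $\gamma$-component (one must use maximality of $\gamma$ to ensure that exactly one term of $E_iu$ and one of $uE_i$ reach $U^-_{(0,\gamma)}U^0U^+_{(\gamma+i,0)}$) and, above all, the conversion of the resulting relation to the $\ast$-product: it is only after passing to $\mathbb{Q}(v,t)\otimes U_v(I,\cdot)$, where no independent factor of $t$ can appear, that one may cancel and conclude that the exponent $\langle i,\eta\rangle-\langle\eta,i\rangle$ vanishes. By comparison, the diagonal-degree reduction and the explicit centrality of $K'_\eta K_\eta$ are routine.
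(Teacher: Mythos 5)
Your necessity argument is fine and coincides with the paper's: the diagonal-degree reduction plus the mechanism of Theorem 5.2 (isolating the maximal $\gamma$-component of $E_iu-uE_i$ and transporting the resulting relation to the $\ast$-product, where no stray power of $t$ can survive) is exactly how the paper disposes of the ``only in'' half; indeed its proof of Theorem 5.7 opens by declaring that only existence remains to be shown.

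The genuine gap is in your existence half. The element $K'_\eta K_\eta$ is homogeneous of degree $(2\eta,2\eta)$, so your construction only populates the gradings $(\mu,\mu)$ with $\mu\in 2Q$ (plus $(0,0)$ via $1$ and the $z_\lambda$). The theorem asserts existence in \emph{every} admissible grading $(\eta,\eta)$, and since the condition $[\eta,i]-[i,\eta]=0$ is linear in $\eta$, the admissible elements form a sublattice $L\subseteq Q$ with $2L\subsetneq L$ whenever $L\neq 0$; your sentence ``since $\eta$ satisfies the condition exactly when $2\eta$ does, these \dots occupy the nonzero admissible gradings'' conflates ``$2\eta$ is admissible'' with ``the grading $(\eta,\eta)$ itself is hit.'' The counterexample is the one the paper flags immediately before the theorem: for $U_{v,t}(\mathfrak{sl}_2)$ every $\eta=n\alpha$ is admissible, and the Casimir element $EF+\frac{v^{-1}K+vK'}{(v-v^{-1})^{2}}$ is homogeneous of degree $(\alpha,\alpha)$, which is not of the form $(2\eta,2\eta)$. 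This is precisely why the paper's existence proof is nontrivial: it chooses $\nu\in Q\cap 2\Lambda$ whose coefficients match those of $\eta$ in parity, takes the one-parameter central element $Z_\nu$ with Harish-Chandra image $(K_\nu)^W$, and lifts it to an element $\tilde Z_\nu$ of degree $(\eta,\eta)$ by replacing each Cartan monomial $K_\varsigma$ by $K_{(\eta-\nu'+\varsigma)/2}K'_{(\eta-\nu'-\varsigma)/2}$, the half-integral exponents lying in the lattice exactly because of the parity matching. Without some such device your argument does not produce central elements in the odd admissible gradings, which are the only ones that make the statement nontrivial.
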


\begin{proof}

It is sufficient to show the existence.
Set $\{\alpha_{i}\}_{i \in I}$ as the set of simple roots. For one arbitrary  $\eta \in Q$ satisfying $[\eta,i]-[i,\eta]=0, \  \forall i \in I$. We express $\eta$ as the sum $x_{1}\alpha_{1}+x_{2}\alpha_{2}+...+x_{n}\alpha_{n}$ where $x_{1},x_{2},...,x_{n} \in \mathbb{Z}$ and $\alpha_{1},\alpha_{2},...,\alpha_{n}\in \{\alpha_{i}\}_{i \in I}$.

Recall the definition of multiplication $\ast$, it is equivelent to show the existence of central elements with grading $(\eta,\eta)$ under the multiplication $\ast$.

For finite type, a direct calculation shows  if $\eta=x_{1}\alpha_{1}+x_{2}\alpha_{2}+...+x_{n}\alpha_{n}$ satisfying $[\eta,i]-[i,\eta]=0$ $\forall$ $i \in I$, then there always exists  $y_{1},y_{2},...,y_{n} \in \mathbb{Z}$ such that for every $i = 1,2,...,n$, $x_{i}$ and $y_{i}$ have the same parity and $\nu=y_{1}\alpha_{1}+y_{2}\alpha_{2}+...+y_{n}\alpha_{n} \in Q \cap  2\Lambda$.

For any $\nu\in Q\cap2\Lambda$, from the conclusion of  Harish-Chandra isomorphism  in one parameter case, we know there exist one unique central element whose image of Harish-Chandra isomorphism is $(K_{\nu})^{W}$. We can denote  this central element as  $Z_{\nu}$  and   express it as $Z_{\nu}=\bigoplus_{\nu'\geq0}U^{-}_{\nu'}U^{0}U^{+}_{\nu'}$. And for every $\nu'>0$,  denote by $\{x_{\nu'}\}$ and $\{y_{\nu'}\}$ the basis of $U^{-}_{\nu'}$ and $U^{+}_{\nu'}$  respectively.
 Then $ Z_{\nu}$ can be expressed as
$$Z_{\nu}=\bigoplus_{\nu'\geq0}\bigoplus_{x_{\nu'} \in \{x_{\nu'}\}, y_{\nu'} \in \{y_{\nu'}\}} x_{\nu'}U^{0}y_{\nu'},$$
where $U^{0}$ denotes the polynomial algebra in Cartan part.

Fix some $\nu' \in Q$,  $x_{\nu'}$ and $y_{\nu'}$, then write $x_{\nu'}U^{0}y_{\nu'}$ as  $\bigoplus_{\varsigma\in Q} \mathbb{Q}(v,t)x_{\nu'}K_{\varsigma}y_{\nu'}$. Corresponding to every $x_{\nu'}U^{0}y_{\nu'}$, we define one term $x_{\nu'}\tilde{U}^{0}y_{\nu'}$ in $((U_{v,t},(I,\cdot)),\ast)$
as $\bigoplus_{\varsigma\in Q} \mathbb{Q}(v,t)x_{\nu'}K_{\frac{\eta-\nu'+\varsigma}{2}}K'_{\frac{\eta-\nu'-\varsigma}{2}}y_{\nu'}$.

 Then we can define one element $\tilde{Z}_{\nu}$ as

$$\tilde{Z}_{\nu}=\bigoplus_{\nu'\geq0}\bigoplus_{x_{\nu'} \in \{x_{\nu'}\}, y_{\nu'} \in \{y_{\nu'}\}} \mathbb{Q}(v)x_{\nu'}\tilde{U}^{0}y_{\nu'},$$

 comparing the communicative relation,  since the only difference exists in Cartan part, we know $\tilde{Z}_{\nu}$ is in the centre of $U_{v,t}(I,\cdot)$.

\end{proof}


\end{document}